\numberwithin{equation}{subsection}
\theoremstyle{plain}
\newtheorem{theorem}[equation]{Theorem}
\newtheorem{lemma}[equation]{Lemma}
\newtheorem{proposition}[equation]{Proposition}
\newtheorem{corollary}[equation]{Corollary}
\newtheorem{problem}[equation]{Problem}
\theoremstyle{definition}
\newtheorem{definition}[equation]{Definition}
\newtheorem{example}[equation]{Example}
\newtheorem{remark}[equation]{Remark}
\DeclareMathOperator{\Imm}{Im}
\DeclareMathOperator{\Ord}{ord}
\DeclareMathOperator{\HF}{HF}
\DeclareMathOperator{\lk}{lk}
\DeclareMathOperator{\Supp}{Supp}
\newcommand{\HFminus}{\HF^{-}}
\newcommand{\F}{\mathbb{F}}
\newcommand{\Cc}{\mathcal{C}}
\newcommand{\Vv}{\mathcal{V}}
\newcommand{\BC}{\mathbb{C}}
\newcommand{\BZ}{\mathbb{Z}}
\newcommand{\Kbar}{\overline{K}}
\newcommand{\ls}{\mathbf{LS}}
\newcommand{\BQ}{\mathbb{Q}}
\newcommand{\BH}{\mathbb{H}}
\newcommand{\lkv}{\ell}
\newcommand{\bt}{{\bf t}}
\author{Eugene Gorsky}
\thanks{The first author was partially supported by RFBR grants 13-01-00755, 16-01-00409 and NSF grants DMS-1403560, DMS-1559338.}
\address{Department of Mathematics, UC Davis, One Shields Avenue, Davis, CA 95616 }
\address{National Research University Higher School of Economics,
 Usacheva 6, Moscow, Russia}
\email{egorskiy@math.ucdavis.edu}
\author{Andr\'as N\'emethi}
\thanks{The second author was partially supported by NKFIH Grant  112735 and
ERC Adv. Grant LDTBud of A. Stipsicz at R\'enyi Institute of Math., Budapest}
\address{ Alfr\'ed R\'enyi Institute of Mathematics,
Hungarian Academy of Sciences,
Re\'altanoda utca 13-15, H-1053, Budapest, Hungary\newline
 \hspace*{4mm} ELTE - University of Budapest, Hungary \newline \hspace*{4mm}
BCAM - Basque Center for Applied Mathematics, Bilbao, Spain}
\email{nemethi.andras@renyi.mta.hu }
\title{On the set of L--space surgeries for links}
\begin{document}

\begin{abstract}
It it known that the set of L--space surgeries on a nontrivial  L--space knot is always bounded from below.
However, already for two-component torus links the set of L--space surgeries might be  unbounded from below.
For algebraic two--component links we  provide three complete characterizations  for the  boundedness from below:
one in terms of the $h$--function, one in terms of the Alexander polynomial, and one in terms of the
embedded resolution graph. They show
 that the set of L--space surgeries is bounded from below for most algebraic links.
In fact, the used property of the $h$--function is a sufficient condition for non--algebraic
L--space links as well. 
\end{abstract}

\maketitle

\section{Introduction}

\subsection{}

A 3-manifold is called an L--space, if its Heegaard-Floer homology has the minimal possible rank. L--spaces have been recently explored  and applied to various problems in low-dimensional topology \cite{OSsurg2}.
Being an L--space reflects several deep surgery, topological  and geometrical properties.
A link in $S^3$ is called an L--space link
if all sufficiently large surgeries along its components are L--spaces.

\begin{definition}
Let $L=L_1\cup\ldots\cup L_r\subset S^3$ be a link with $r$ components. We define $\ls(L)\subset \BZ^r$ to be the set
of all $r$--tuples $(d_1,\ldots,d_r)$ such that the surgery  $S^3_{d_1,\ldots,d_r}(L)$ of $S^3$ along $L$ with  coefficients $(d_1,\ldots,d_r)$ is an L--space.
\end{definition}

By definition, $L$ is an L--space link if and only if $(\BZ_{\ge N})^r\subset \ls(L)$ for some $N$.
The structure of the set $\ls$ for knots is described by the following result.

\begin{theorem}(\cite{OSsurg1,OSsurg2}, \cite[Lemma 2.13]{Hedden})
\label{th: ls knots}
Let $K$ be a nontrivial L--space knot. Then $S^3_d(K)$ is an L--space if and only if $d\ge 2g(K)-1$.
In other words, $\ls(K)=[2g(K)-1,+\infty)$.
\end{theorem}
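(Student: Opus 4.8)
The plan is to prove the equivalence in Theorem~\ref{th: ls knots} by combining two ingredients: the large-surgery formula for Heegaard--Floer homology that computes $\widehat{\HF}(S^3_d(K))$ in terms of the knot Floer complex, and the monotonicity of the relevant ranks as $d$ decreases. First I would recall that for a knot $K$ the minimal rank of $\widehat{\HF}$ of a surgery is $|d|$ (the L--space condition for $S^3_d(K)$ is precisely $\mathrm{rk}\,\widehat{\HF}(S^3_d(K))=|d|$). The strategy for the ``if'' direction is to use the fact that $K$ is an L--space knot, meaning some large surgery $S^3_N(K)$ is an L--space, and to show that once $d\ge 2g(K)-1$ the surgery remains an L--space. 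The converse ``only if'' direction requires showing that for $d<2g(K)-1$ the rank is strictly larger than $|d|$.

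The key computational tool is the mapping-cone description of $\widehat{\HF}(S^3_d(K))$ built from the maps $\widehat{v}_s,\widehat{h}_s\colon \widehat{A}_s\to \widehat{B}$, where $\widehat{A}_s=\widehat{\HF}(A_s)$ are the subquotient complexes indexed by $s\in\BZ$ and $\widehat{B}\cong \widehat{\HF}(S^3)$ has rank one. Each $\widehat{v}_s$ and $\widehat{h}_s$ is either an isomorphism or zero on homology (for an L--space knot, after identifying $\widehat{A}_s$ with $\F$ in the relevant range), and the rank of the total surgery homology is governed by how many of these maps fail to be surjective. The central arithmetic fact, following from the genus bound, is that $\widehat{v}_s$ is an isomorphism for $s\ge g(K)$ and $\widehat{h}_s$ is an isomorphism for $s\le -g(K)$, while in the window $|s|<g(K)$ the maps can degenerate. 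I would carry out the count of the surviving generators in the mapping cone as a function of $d$, showing that the rank equals $|d|$ exactly when the integer points are spread widely enough to avoid the degenerate window producing extra homology, which is controlled by the threshold $2g(K)-1$.

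The main obstacle, and the step I expect to require the most care, is establishing that an L--space knot has the rigid ``staircase'' structure forcing each $\widehat{A}_s$ to have rank one and each $\widehat{v}_s,\widehat{h}_s$ to be either an isomorphism or zero. This is where the hypothesis that $K$ is an \emph{nontrivial L--space knot} is used essentially: the existence of a single L--space surgery propagates, via the mapping cone and the rank inequality $\mathrm{rk}\,\widehat{\HF}\ge |d|$, back to a constraint on $\widehat{\HF K}(K)$ itself, pinning down its Alexander grading support and showing the top Alexander grading equals $g(K)$. I would invoke the surgery exact sequences relating consecutive coefficients to transfer the L--space property between $S^3_d(K)$ and $S^3_{d+1}(K)$, which gives the monotonicity making the set $\ls(K)$ an interval of the form $[c,+\infty)$; identifying the left endpoint $c$ with $2g(K)-1$ is then the remaining bookkeeping. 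Once the staircase rigidity is in hand, the final equivalence follows by a direct comparison of the two formulas for the rank, with the Hedden reformulation \cite[Lemma 2.13]{Hedden} supplying the precise genus threshold.
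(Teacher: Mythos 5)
The paper does not actually prove this statement: Theorem \ref{th: ls knots} is imported from \cite{OSsurg1,OSsurg2} and \cite[Lemma 2.13]{Hedden}, and the only in-text argument touching it is the remark after Theorem \ref{th:bounded2}, which re-derives only the positivity of L--space surgery slopes by noting that every $v\in[1,2g-1]$ is a very good point and running the surgery--complex argument of Theorem \ref{th:bounded}. So you are reconstructing the standard literature proof rather than competing with one in the paper, and your outline follows the right route: the large--surgery identification $\widehat{\HF}(S^3_d(K),[s])\cong H_*(\widehat{A}_s)\cong\F$ handles $d\ge 2g-1$, and a mapping--cone rank count handles the rest. Two cautions. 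First, the ``staircase rigidity'' you flag as the main obstacle --- each $H_*(\widehat{A}_s)$ of rank one, each $\widehat{v}_s,\widehat{h}_s$ an isomorphism or zero, and top Alexander grading equal to $g(K)$ --- is exactly the main theorem of \cite{OSsurg2} together with genus detection; it should be cited, not re-derived inside this proof. Second, the bookkeeping is subtler than counting the $s$ in the window $|s|<g$ where both maps degenerate: an isolated degenerate $\widehat{A}_s$ contributes no extra homology, because its two target $\widehat{B}$'s are still hit isomorphically by the neighbouring $\widehat{A}$'s (the trefoil with $d=1$ has a nonempty window, yet the Poincar\'e sphere is an L--space). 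The extra rank comes only from those $\widehat{B}_s$ missed by \emph{both} incoming maps, i.e.\ from the overlap of the two degenerate windows after shifting one by $d$; this overlap has exactly $\max(0,2g-1-d)$ elements, so $\mathrm{rk}\,\widehat{\HF}(S^3_d(K))=d+2\max(0,2g-1-d)$ for $d>0$ and exceeds $|d|$ for every $d\le 0$. That single formula gives both implications at once and makes your exact--triangle monotonicity step unnecessary; with these two adjustments your plan is sound.
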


On the other hand, already for two-component links the structure of the set $\ls$ becomes very complicated.
For example, the sets $\ls(T(2p,2q))$  for two-component torus links
were studied for $p=1$ in \cite{Liu} and for $p>1$ in \cite{GN1}, and happen to be unbounded from below (see Figure \ref{fig: t46} for
the structure of $\ls$ for the $(4,6)$ torus link). In this paper, we study the following basic question about L--space links.

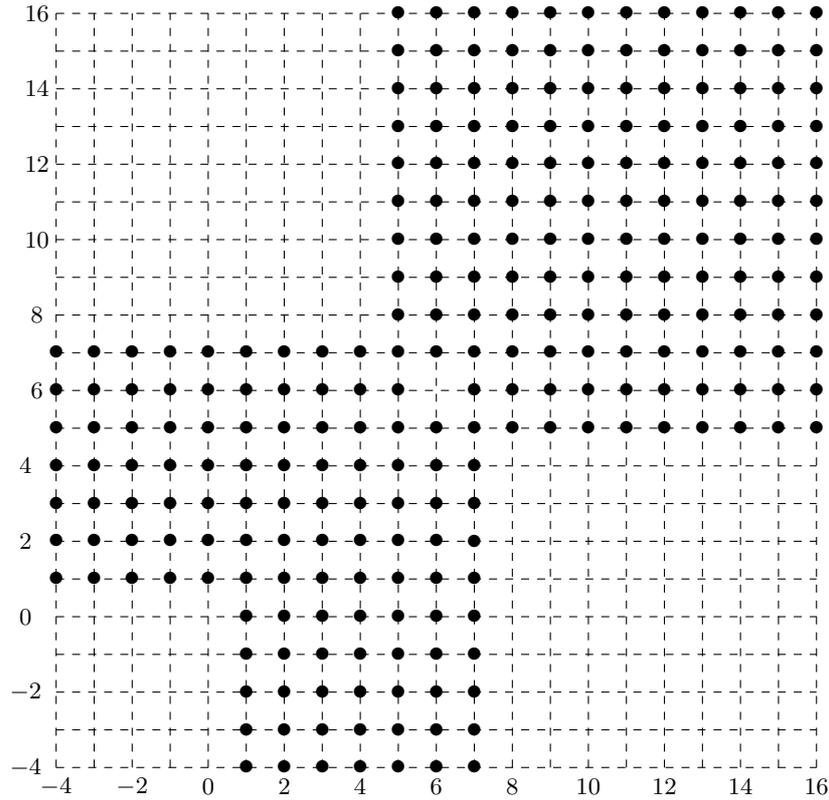
\begin{figure}
\begin{tikzpicture}[scale=0.5]
\draw [dashed] (-10,-10)--(10,-10);
\draw [dashed] (-10,-9)--(10,-9);
\draw [dashed] (-10,-8)--(10,-8);
\draw [dashed] (-10,-7)--(10,-7);
\draw [dashed] (-10,-6)--(10,-6);
\draw [dashed] (-10,-5)--(10,-5);
\draw [dashed] (-10,-4)--(10,-4);
\draw [dashed] (-10,-3)--(10,-3);
\draw [dashed] (-10,-2)--(10,-2);
\draw [dashed] (-10,-1)--(10,-1);
\draw [dashed] (-10,0)--(10,0);
\draw [dashed] (-10,1)--(10,1);
\draw [dashed] (-10,2)--(10,2);
\draw [dashed] (-10,3)--(10,3);
\draw [dashed] (-10,4)--(10,4);
\draw [dashed] (-10,5)--(10,5);
\draw [dashed] (-10,6)--(10,6);
\draw [dashed] (-10,7)--(10,7);
\draw [dashed] (-10,8)--(10,8);
\draw [dashed] (-10,9)--(10,9);
\draw [dashed] (-10,10)--(10,10);

\draw [dashed] (-10,-10)--(-10,10);
\draw [dashed] (-9,-10)--(-9,10);
\draw [dashed] (-8,-10)--(-8,10);
\draw [dashed] (-7,-10)--(-7,10);
\draw [dashed] (-6,-10)--(-6,10);
\draw [dashed] (-5,-10)--(-5,10);
\draw [dashed] (-4,-10)--(-4,10);
\draw [dashed] (-3,-10)--(-3,10);
\draw [dashed] (-2,-10)--(-2,10);
\draw [dashed] (-1,-10)--(-1,10);
\draw [dashed] (0,-10)--(0,10);
\draw [dashed] (1,-10)--(1,10);
\draw [dashed] (2,-10)--(2,10);
\draw [dashed] (3,-10)--(3,10);
\draw [dashed] (4,-10)--(4,10);
\draw [dashed] (5,-10)--(5,10);
\draw [dashed] (6,-10)--(6,10);
\draw [dashed] (7,-10)--(7,10);
\draw [dashed] (8,-10)--(8,10);
\draw [dashed] (9,-10)--(9,10);
\draw [dashed] (10,-10)--(10,10);

\draw (-10,-10.5) node {\scriptsize{$-4$}};
\draw (-8,-10.5) node {\scriptsize{$-2$}};
\draw (-6,-10.5) node {\scriptsize{$0$}};
\draw (-4,-10.5) node {\scriptsize{$2$}};
\draw (-2,-10.5) node {\scriptsize{$4$}};
\draw (0,-10.5) node {\scriptsize{$6$}};
\draw (2,-10.5) node {\scriptsize{$8$}};
\draw (4,-10.5) node {\scriptsize{$10$}};
\draw (6,-10.5) node {\scriptsize{$12$}};
\draw (8,-10.5) node {\scriptsize{$14$}};
\draw (10,-10.5) node {\scriptsize{$16$}};

\draw (-10.8,-10) node {\scriptsize{$-4$}};
\draw (-10.8,-8) node {\scriptsize{$-2$}};
\draw (-10.8,-6) node {\scriptsize{$0$}};
\draw (-10.8,-4) node {\scriptsize{$2$}};
\draw (-10.8,-2) node {\scriptsize{$4$}};
\draw (-10.5,0) node {\scriptsize{$6$}};
\draw (-10.5,2) node {\scriptsize{$8$}};
\draw (-10.5,4) node {\scriptsize{$10$}};
\draw (-10.5,6) node {\scriptsize{$12$}};
\draw (-10.5,8) node {\scriptsize{$14$}};
\draw (-10.5,10) node {\scriptsize{$16$}};

\draw (-10,-5) node {$\bullet$};
\draw (-10,-4) node {$\bullet$};
\draw (-10,-3) node {$\bullet$};
\draw (-10,-2) node {$\bullet$};
\draw (-10,-1) node {$\bullet$};
\draw (-10,0) node {$\bullet$};
\draw (-10,1) node {$\bullet$};
\draw (-9,-5) node {$\bullet$};
\draw (-9,-4) node {$\bullet$};
\draw (-9,-3) node {$\bullet$};
\draw (-9,-2) node {$\bullet$};
\draw (-9,-1) node {$\bullet$};
\draw (-9,0) node {$\bullet$};
\draw (-9,1) node {$\bullet$};
\draw (-8,-5) node {$\bullet$};
\draw (-8,-4) node {$\bullet$};
\draw (-8,-3) node {$\bullet$};
\draw (-8,-2) node {$\bullet$};
\draw (-8,-1) node {$\bullet$};
\draw (-8,0) node {$\bullet$};
\draw (-8,1) node {$\bullet$};
\draw (-7,-5) node {$\bullet$};
\draw (-7,-4) node {$\bullet$};
\draw (-7,-3) node {$\bullet$};
\draw (-7,-2) node {$\bullet$};
\draw (-7,-1) node {$\bullet$};
\draw (-7,0) node {$\bullet$};
\draw (-7,1) node {$\bullet$};
\draw (-6,-5) node {$\bullet$};
\draw (-6,-4) node {$\bullet$};
\draw (-6,-3) node {$\bullet$};
\draw (-6,-2) node {$\bullet$};
\draw (-6,-1) node {$\bullet$};
\draw (-6,0) node {$\bullet$};
\draw (-6,1) node {$\bullet$};
\draw (-5,-10) node {$\bullet$};
\draw (-5,-9) node {$\bullet$};
\draw (-5,-8) node {$\bullet$};
\draw (-5,-7) node {$\bullet$};
\draw (-5,-6) node {$\bullet$};
\draw (-5,-5) node {$\bullet$};
\draw (-5,-4) node {$\bullet$};
\draw (-5,-3) node {$\bullet$};
\draw (-5,-2) node {$\bullet$};
\draw (-5,-1) node {$\bullet$};
\draw (-5,0) node {$\bullet$};
\draw (-5,1) node {$\bullet$};
\draw (-4,-10) node {$\bullet$};
\draw (-4,-9) node {$\bullet$};
\draw (-4,-8) node {$\bullet$};
\draw (-4,-7) node {$\bullet$};
\draw (-4,-6) node {$\bullet$};
\draw (-4,-5) node {$\bullet$};
\draw (-4,-4) node {$\bullet$};
\draw (-4,-3) node {$\bullet$};
\draw (-4,-2) node {$\bullet$};
\draw (-4,-1) node {$\bullet$};
\draw (-4,0) node {$\bullet$};
\draw (-4,1) node {$\bullet$};
\draw (-3,-10) node {$\bullet$};
\draw (-3,-9) node {$\bullet$};
\draw (-3,-8) node {$\bullet$};
\draw (-3,-7) node {$\bullet$};
\draw (-3,-6) node {$\bullet$};
\draw (-3,-5) node {$\bullet$};
\draw (-3,-4) node {$\bullet$};
\draw (-3,-3) node {$\bullet$};
\draw (-3,-2) node {$\bullet$};
\draw (-3,-1) node {$\bullet$};
\draw (-3,0) node {$\bullet$};
\draw (-3,1) node {$\bullet$};
\draw (-2,-10) node {$\bullet$};
\draw (-2,-9) node {$\bullet$};
\draw (-2,-8) node {$\bullet$};
\draw (-2,-7) node {$\bullet$};
\draw (-2,-6) node {$\bullet$};
\draw (-2,-5) node {$\bullet$};
\draw (-2,-4) node {$\bullet$};
\draw (-2,-3) node {$\bullet$};
\draw (-2,-2) node {$\bullet$};
\draw (-2,-1) node {$\bullet$};
\draw (-2,0) node {$\bullet$};
\draw (-2,1) node {$\bullet$};
\draw (-1,-10) node {$\bullet$};
\draw (-1,-9) node {$\bullet$};
\draw (-1,-8) node {$\bullet$};
\draw (-1,-7) node {$\bullet$};
\draw (-1,-6) node {$\bullet$};
\draw (-1,-5) node {$\bullet$};
\draw (-1,-4) node {$\bullet$};
\draw (-1,-3) node {$\bullet$};
\draw (-1,-2) node {$\bullet$};
\draw (-1,-1) node {$\bullet$};
\draw (-1,0) node {$\bullet$};
\draw (-1,1) node {$\bullet$};
\draw (-1,2) node {$\bullet$};
\draw (-1,3) node {$\bullet$};
\draw (-1,4) node {$\bullet$};
\draw (-1,5) node {$\bullet$};
\draw (-1,6) node {$\bullet$};
\draw (-1,7) node {$\bullet$};
\draw (-1,8) node {$\bullet$};
\draw (-1,9) node {$\bullet$};
\draw (-1,10) node {$\bullet$};
\draw (0,-10) node {$\bullet$};
\draw (0,-9) node {$\bullet$};
\draw (0,-8) node {$\bullet$};
\draw (0,-7) node {$\bullet$};
\draw (0,-6) node {$\bullet$};
\draw (0,-5) node {$\bullet$};
\draw (0,-4) node {$\bullet$};
\draw (0,-3) node {$\bullet$};
\draw (0,-2) node {$\bullet$};
\draw (0,-1) node {$\bullet$};
\draw (0,1) node {$\bullet$};
\draw (0,2) node {$\bullet$};
\draw (0,3) node {$\bullet$};
\draw (0,4) node {$\bullet$};
\draw (0,5) node {$\bullet$};
\draw (0,6) node {$\bullet$};
\draw (0,7) node {$\bullet$};
\draw (0,8) node {$\bullet$};
\draw (0,9) node {$\bullet$};
\draw (0,10) node {$\bullet$};
\draw (1,-10) node {$\bullet$};
\draw (1,-9) node {$\bullet$};
\draw (1,-8) node {$\bullet$};
\draw (1,-7) node {$\bullet$};
\draw (1,-6) node {$\bullet$};
\draw (1,-5) node {$\bullet$};
\draw (1,-4) node {$\bullet$};
\draw (1,-3) node {$\bullet$};
\draw (1,-2) node {$\bullet$};
\draw (1,-1) node {$\bullet$};
\draw (1,0) node {$\bullet$};
\draw (1,1) node {$\bullet$};
\draw (1,2) node {$\bullet$};
\draw (1,3) node {$\bullet$};
\draw (1,4) node {$\bullet$};
\draw (1,5) node {$\bullet$};
\draw (1,6) node {$\bullet$};
\draw (1,7) node {$\bullet$};
\draw (1,8) node {$\bullet$};
\draw (1,9) node {$\bullet$};
\draw (1,10) node {$\bullet$};
\draw (2,-1) node {$\bullet$};
\draw (2,0) node {$\bullet$};
\draw (2,1) node {$\bullet$};
\draw (2,2) node {$\bullet$};
\draw (2,3) node {$\bullet$};
\draw (2,4) node {$\bullet$};
\draw (2,5) node {$\bullet$};
\draw (2,6) node {$\bullet$};
\draw (2,7) node {$\bullet$};
\draw (2,8) node {$\bullet$};
\draw (2,9) node {$\bullet$};
\draw (2,10) node {$\bullet$};
\draw (3,-1) node {$\bullet$};
\draw (3,0) node {$\bullet$};
\draw (3,1) node {$\bullet$};
\draw (3,2) node {$\bullet$};
\draw (3,3) node {$\bullet$};
\draw (3,4) node {$\bullet$};
\draw (3,5) node {$\bullet$};
\draw (3,6) node {$\bullet$};
\draw (3,7) node {$\bullet$};
\draw (3,8) node {$\bullet$};
\draw (3,9) node {$\bullet$};
\draw (3,10) node {$\bullet$};
\draw (4,-1) node {$\bullet$};
\draw (4,0) node {$\bullet$};
\draw (4,1) node {$\bullet$};
\draw (4,2) node {$\bullet$};
\draw (4,3) node {$\bullet$};
\draw (4,4) node {$\bullet$};
\draw (4,5) node {$\bullet$};
\draw (4,6) node {$\bullet$};
\draw (4,7) node {$\bullet$};
\draw (4,8) node {$\bullet$};
\draw (4,9) node {$\bullet$};
\draw (4,10) node {$\bullet$};
\draw (5,-1) node {$\bullet$};
\draw (5,0) node {$\bullet$};
\draw (5,1) node {$\bullet$};
\draw (5,2) node {$\bullet$};
\draw (5,3) node {$\bullet$};
\draw (5,4) node {$\bullet$};
\draw (5,5) node {$\bullet$};
\draw (5,6) node {$\bullet$};
\draw (5,7) node {$\bullet$};
\draw (5,8) node {$\bullet$};
\draw (5,9) node {$\bullet$};
\draw (5,10) node {$\bullet$};
\draw (6,-1) node {$\bullet$};
\draw (6,0) node {$\bullet$};
\draw (6,1) node {$\bullet$};
\draw (6,2) node {$\bullet$};
\draw (6,3) node {$\bullet$};
\draw (6,4) node {$\bullet$};
\draw (6,5) node {$\bullet$};
\draw (6,6) node {$\bullet$};
\draw (6,7) node {$\bullet$};
\draw (6,8) node {$\bullet$};
\draw (6,9) node {$\bullet$};
\draw (6,10) node {$\bullet$};
\draw (7,-1) node {$\bullet$};
\draw (7,0) node {$\bullet$};
\draw (7,1) node {$\bullet$};
\draw (7,2) node {$\bullet$};
\draw (7,3) node {$\bullet$};
\draw (7,4) node {$\bullet$};
\draw (7,5) node {$\bullet$};
\draw (7,6) node {$\bullet$};
\draw (7,7) node {$\bullet$};
\draw (7,8) node {$\bullet$};
\draw (7,9) node {$\bullet$};
\draw (7,10) node {$\bullet$};
\draw (8,-1) node {$\bullet$};
\draw (8,0) node {$\bullet$};
\draw (8,1) node {$\bullet$};
\draw (8,2) node {$\bullet$};
\draw (8,3) node {$\bullet$};
\draw (8,4) node {$\bullet$};
\draw (8,5) node {$\bullet$};
\draw (8,6) node {$\bullet$};
\draw (8,7) node {$\bullet$};
\draw (8,8) node {$\bullet$};
\draw (8,9) node {$\bullet$};
\draw (8,10) node {$\bullet$};
\draw (9,-1) node {$\bullet$};
\draw (9,0) node {$\bullet$};
\draw (9,1) node {$\bullet$};
\draw (9,2) node {$\bullet$};
\draw (9,3) node {$\bullet$};
\draw (9,4) node {$\bullet$};
\draw (9,5) node {$\bullet$};
\draw (9,6) node {$\bullet$};
\draw (9,7) node {$\bullet$};
\draw (9,8) node {$\bullet$};
\draw (9,9) node {$\bullet$};
\draw (9,10) node {$\bullet$};
\draw (10,-1) node {$\bullet$};
\draw (10,0) node {$\bullet$};
\draw (10,1) node {$\bullet$};
\draw (10,2) node {$\bullet$};
\draw (10,3) node {$\bullet$};
\draw (10,4) node {$\bullet$};
\draw (10,5) node {$\bullet$};
\draw (10,6) node {$\bullet$};
\draw (10,7) node {$\bullet$};
\draw (10,8) node {$\bullet$};
\draw (10,9) node {$\bullet$};
\draw (10,10) node {$\bullet$};

\end{tikzpicture}
\caption{The set $\ls$ for the $(4,6)$ torus link}
\label{fig: t46}
\end{figure}

\begin{problem}
\label{problem}
For which L--space links the set $\ls(L)$ is bounded from below?
\end{problem}

Note that by a theorem of Liu \cite{Liu} the Heegaard--Floer homology of any surgery on a 2-component L--space link
is completely determined by its Heegaard-Floer link homology, which, in its turn, is determined by the bivariate Alexander polynomial.
However, it appears to be hard to use this algorithm directly to determine the set $\ls(L)$. We give the following partial answer.

Assume that $L$ has 2 components.
Let  $h$ be the $h$--function for $L$ (defined in \cite{GN2}),
$h_i$ are the $h$--functions for $L_i$ and $v^*$ is the point naturally dual to $v$,
see Definition \ref{very good} for all details.
A point $v=(v_1,v_2)\in \BZ^2$ is called {\em good} for $L$, if $h(v_1,v_2)>h_1(v_1)$ and $h(v_1,v_2)>h_2(v_2)$. It is called {\em very good}, if both $v$ and $v^*$ are good.

\begin{theorem}
\label{th:bounded}
Suppose that for a 2-component L--space link $L$ there is a very good point $v\in \BZ^2$.
Then $\ls(L)$ is bounded from below, moreover,
$$
\ls(L)\subset \{(d_1,d_2):d_1> 0, d_2> 0, d_1d_2 > l^2\},
$$
where $l$ is the linking number between $L_1$ and $L_2$.
\end{theorem}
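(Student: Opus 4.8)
The plan is to reduce, via the surgery formula for L--space links, the computation of $\widehat{\HF}(S^3_{d_1,d_2}(L))$ to a combinatorial mapping cone built from the $h$--function and the framing matrix $\Lambda=\left(\begin{smallmatrix} d_1 & l\\ l & d_2\end{smallmatrix}\right)$, and then to produce extra homology whenever $\Lambda$ is not positive definite. Since $|H_1(S^3_\Lambda(L))|=|\det\Lambda|=|d_1d_2-l^2|$ and $\mathrm{rk}\,\widehat{\HF}\ge |H_1|$ with equality exactly for L--spaces, and since the target region $\{d_1>0,\,d_2>0,\,d_1d_2>l^2\}$ is precisely the cone on which $\Lambda$ is positive definite (once $d_1>0$ and $d_1d_2-l^2>0$, positivity of $d_2$ is automatic), the statement is equivalent to: a very good point forces every L--space framing to be positive definite. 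In particular this cone lies in $\{d_1\ge 1,\,d_2\ge 1\}$, which gives the asserted boundedness from below. The degenerate case is immediate: if $d_1d_2=l^2$ then $\det\Lambda=0$, so $b_1(S^3_\Lambda(L))>0$, the manifold is not a rational homology sphere, and hence is not an L--space; this uses no hypothesis. It remains to treat nondegenerate $\Lambda$ that is negative definite or indefinite.

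Next I would interpret the good condition inside the complex. By Liu's theorem the Heegaard--Floer homology of every surgery on $L$ is determined by the link Floer homology, hence by $h$; in the hat flavour the vertex pieces of the surgery mapping cone are the one--dimensional spaces $\widehat{\mathfrak A}(\mathbf s)\cong\F$, the two inclusion directions translate a vertex $\mathbf s$ by the columns of $\Lambda$, and the comparison maps to the one--component large--surgery pieces are multiplication by $U^{\,h(\mathbf s)-h_i(s_i)}$. Because $h(s_1,s_2)$ stabilizes to $h_1(s_1)$ as $s_2\to+\infty$ and to $h_2(s_2)$ as $s_1\to+\infty$, one checks $h(\mathbf s)\ge h_i(s_i)$ always, and the good inequalities $h(v)>h_1(v_1)$, $h(v)>h_2(v_2)$ say precisely that at $v$ both comparison maps are multiplication by a strictly positive power of $U$, hence vanish on the hat--level module $\widehat{\mathfrak A}(v)$. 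Thus at a good point the generator of $\widehat{\mathfrak A}(v)$ is annihilated by both sublink projections and is a candidate surviving cycle.

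To turn this into genuine extra homology when $\Lambda$ is not positive definite I would argue in a fixed spin$^{\mathrm c}$ structure, where the truncated surgery complex is finite with $\chi(\widehat{\HF}(S^3_\Lambda(L),\mathfrak s))=\pm1$; by parity the rank can only exceed the minimal value $1$ by an even amount, so I must exhibit \emph{two} independent extra classes. This is exactly where ``very good'' is used: the conjugation symmetry of link Floer homology pairs the vertex $v$ with its dual $v^*$, and the hypothesis that both are good produces the required pair of surviving generators, one near $v$ and one near $v^*$. The role of definiteness is encoded in the weight function $\mathbf x\mapsto \tfrac12(\mathbf x,\Lambda\mathbf x)$, along whose gradient the edge maps of the cone flow: when $\Lambda$ is positive definite this form is proper and bounded below, every good vertex is eventually cancelled against a neighbour of strictly smaller weight (which is why large positive surgeries are L--spaces), but when $\Lambda$ is negative definite or indefinite the form is no longer proper, the good vertices at $v$ and $v^*$ have no lower neighbour inside their coset to cancel against, and their classes persist, forcing $\mathrm{rk}\,\widehat{\HF}(S^3_\Lambda(L))\ge |\det\Lambda|+2$.

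The main obstacle I anticipate is this final survival check: one must control \emph{all} differentials of the two--dimensional hypercube---not only the two sublink projections but also the diagonal square map and the contributions of vertices arbitrarily far from $v$---to certify that the class built from $\widehat{\mathfrak A}(v)$ and $\widehat{\mathfrak A}(v^*)$ is a nonboundary cycle. The vanishing established above disposes of the two edge directions at $v$, but ruling out cancellation by the square map and by distant vertices is precisely where the non--properness of the indefinite or negative--definite form must be leveraged, and where the two--sided ``very good'' hypothesis, rather than merely ``good'', becomes indispensable. I expect the cleanest route is to pass to the sublevel--set (lattice) model of the complex and show directly that its reduced homology in the relevant coset is nonzero.
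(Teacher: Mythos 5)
Your reduction of the statement to ``a very good point forces every L--space framing matrix to be positive definite'' is exactly the paper's reformulation (Theorem \ref{th:bounded2}), and your reading of the good condition --- that at $v$ the two edge maps out of the $\{1,2\}$--vertex of the surgery complex are multiplication by strictly positive powers of $U$ --- is also the paper's starting point. But the proof stops precisely at the step you yourself flag as the main obstacle: you never actually show that the candidate classes at $v$ and $v^*$ survive in homology when $\Lambda$ is indefinite or negative definite, and the appeal to ``non--properness of the weight function'' plus a deferred ``sublevel--set model'' is not an argument. There is also a concrete problem with the parity count you propose: $v$ and $v^*$ correspond to \emph{conjugate} $\mathrm{spin}^c$ structures, which are in general distinct cosets of $\Lambda\BZ^2$, so they cannot jointly supply the two extra generators needed to preserve $\chi=\pm1$ within a single $\mathrm{spin}^c$ summand; one extra surviving generator in each of two conjugate summands would make each rank even, so your own parity constraint shows the naive count cannot be right as stated.

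The paper closes this gap by a different mechanism that avoids any direct survival check. Working in the minus flavor, the very good hypothesis makes $\partial z_{\{1,2\}}(v)$ divisible by $U$, so one can form the cycle $Z(v)=U^{-1}\partial z_{\{1,2\}}(v)$, whose class is annihilated by $U$. If the surgery is an L--space, $H_*(\Cc,\partial)\cong\F[U]$ has no $U$--torsion, so $Z(v)=\partial\alpha$ for some $\alpha$; the explicit form of the differential forces any such $\alpha$ to be a combination $\sum_{u\neq 0}U^{N(u)}z_{\{1,2\}}(v+\Lambda u)$ with $N(u)\ge 0$ over the whole coset. The absolute grading formula
$$\deg\bigl(z_{\{1,2\}}(v+\Lambda u)\bigr)=(u,\Lambda u)+(\gamma,u)-2h(v+\Lambda u)+2+\mathrm{const}$$
then forces $(u,\Lambda u)+(\gamma,u)$ to be bounded below on $\BZ^2\setminus\{0\}$, which is exactly positive definiteness of $\Lambda$. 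In other words, instead of proving that a cycle is \emph{not} a boundary (which requires controlling all differentials, including contributions from distant vertices), the paper assumes it \emph{is} a boundary and derives a contradiction purely from gradings. If you want to complete your write--up, this $U$--torsion trick is the missing idea; without it, or a worked--out substitute, the proposal is incomplete at its decisive step.
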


The proof uses Heegaard Floer link homology, especially
properties of the surgery complex developed in \cite{MO,Liu}.

Informally, Theorem \ref{th:bounded} shows that `for most' L--space links the set $\ls(L)$ is bounded from below.  For algebraic links  we will provide several
characterizations of the boundedness property.
The simplest case with $\ls(L)$ bounded from below is provided by
the link of singularity $\{(x^2-y^3)(x^3-y^2)=0\}$, consisting of two trefoils with linking number 4.
See Figure \ref{fig: ls two cusps} for the shape of $\ls(L)$.
However, the above Theorem  can  also be used for non-algebraic links:
see Example \ref{whitehead} for the Whitehead link, where  the set $\ls$ was already described in \cite{Liu}.

\begin{figure}
\begin{tikzpicture}[scale=0.5]
\draw [dashed] (-10,-10)--(10,-10);
\draw [dashed] (-10,-9)--(10,-9);
\draw [dashed] (-10,-8)--(10,-8);
\draw [dashed] (-10,-7)--(10,-7);
\draw [dashed] (-10,-6)--(10,-6);
\draw [dashed] (-10,-5)--(10,-5);
\draw [dashed] (-10,-4)--(10,-4);
\draw [dashed] (-10,-3)--(10,-3);
\draw [dashed] (-10,-2)--(10,-2);
\draw [dashed] (-10,-1)--(10,-1);
\draw [dashed] (-10,0)--(10,0);
\draw [dashed] (-10,1)--(10,1);
\draw [dashed] (-10,2)--(10,2);
\draw [dashed] (-10,3)--(10,3);
\draw [dashed] (-10,4)--(10,4);
\draw [dashed] (-10,5)--(10,5);
\draw [dashed] (-10,6)--(10,6);
\draw [dashed] (-10,7)--(10,7);
\draw [dashed] (-10,8)--(10,8);
\draw [dashed] (-10,9)--(10,9);
\draw [dashed] (-10,10)--(10,10);

\draw [dashed] (-10,-10)--(-10,10);
\draw [dashed] (-9,-10)--(-9,10);
\draw [dashed] (-8,-10)--(-8,10);
\draw [dashed] (-7,-10)--(-7,10);
\draw [dashed] (-6,-10)--(-6,10);
\draw [dashed] (-5,-10)--(-5,10);
\draw [dashed] (-4,-10)--(-4,10);
\draw [dashed] (-3,-10)--(-3,10);
\draw [dashed] (-2,-10)--(-2,10);
\draw [dashed] (-1,-10)--(-1,10);
\draw [dashed] (0,-10)--(0,10);
\draw [dashed] (1,-10)--(1,10);
\draw [dashed] (2,-10)--(2,10);
\draw [dashed] (3,-10)--(3,10);
\draw [dashed] (4,-10)--(4,10);
\draw [dashed] (5,-10)--(5,10);
\draw [dashed] (6,-10)--(6,10);
\draw [dashed] (7,-10)--(7,10);
\draw [dashed] (8,-10)--(8,10);
\draw [dashed] (9,-10)--(9,10);
\draw [dashed] (10,-10)--(10,10);

\draw (-10,-10.5) node {\scriptsize{$-4$}};
\draw (-8,-10.5) node {\scriptsize{$-2$}};
\draw (-6,-10.5) node {\scriptsize{$0$}};
\draw (-4,-10.5) node {\scriptsize{$2$}};
\draw (-2,-10.5) node {\scriptsize{$4$}};
\draw (0,-10.5) node {\scriptsize{$6$}};
\draw (2,-10.5) node {\scriptsize{$8$}};
\draw (4,-10.5) node {\scriptsize{$10$}};
\draw (6,-10.5) node {\scriptsize{$12$}};
\draw (8,-10.5) node {\scriptsize{$14$}};
\draw (10,-10.5) node {\scriptsize{$16$}};

\draw (-10.8,-10) node {\scriptsize{$-4$}};
\draw (-10.8,-8) node {\scriptsize{$-2$}};
\draw (-10.8,-6) node {\scriptsize{$0$}};
\draw (-10.8,-4) node {\scriptsize{$2$}};
\draw (-10.8,-2) node {\scriptsize{$4$}};
\draw (-10.5,0) node {\scriptsize{$6$}};
\draw (-10.5,2) node {\scriptsize{$8$}};
\draw (-10.5,4) node {\scriptsize{$10$}};
\draw (-10.5,6) node {\scriptsize{$12$}};
\draw (-10.5,8) node {\scriptsize{$14$}};
\draw (-10.5,10) node {\scriptsize{$16$}};

\draw (-3,0) node {$\bullet$};
\draw (-3,1) node {$\bullet$};
\draw (-3,2) node {$\bullet$};
\draw (-3,3) node {$\bullet$};
\draw (-3,4) node {$\bullet$};
\draw (-3,5) node {$\bullet$};
\draw (-3,6) node {$\bullet$};
\draw (-3,7) node {$\bullet$};
\draw (-3,8) node {$\bullet$};
\draw (-3,9) node {$\bullet$};
\draw (-3,10) node {$\bullet$};
\draw (-2,-1) node {$\bullet$};
\draw (-2,0) node {$\bullet$};
\draw (-2,1) node {$\bullet$};
\draw (-2,2) node {$\bullet$};
\draw (-2,3) node {$\bullet$};
\draw (-2,4) node {$\bullet$};
\draw (-2,5) node {$\bullet$};
\draw (-2,6) node {$\bullet$};
\draw (-2,7) node {$\bullet$};
\draw (-2,8) node {$\bullet$};
\draw (-2,9) node {$\bullet$};
\draw (-2,10) node {$\bullet$};
\draw (-1,-2) node {$\bullet$};
\draw (-1,-1) node {$\bullet$};
\draw (-1,0) node {$\bullet$};
\draw (-1,1) node {$\bullet$};
\draw (-1,2) node {$\bullet$};
\draw (-1,3) node {$\bullet$};
\draw (-1,4) node {$\bullet$};
\draw (-1,5) node {$\bullet$};
\draw (-1,6) node {$\bullet$};
\draw (-1,7) node {$\bullet$};
\draw (-1,8) node {$\bullet$};
\draw (-1,9) node {$\bullet$};
\draw (-1,10) node {$\bullet$};
\draw (0,-3) node {$\bullet$};
\draw (0,-2) node {$\bullet$};
\draw (0,-1) node {$\bullet$};
\draw (0,0) node {$\bullet$};
\draw (0,1) node {$\bullet$};
\draw (0,2) node {$\bullet$};
\draw (0,3) node {$\bullet$};
\draw (0,4) node {$\bullet$};
\draw (0,5) node {$\bullet$};
\draw (0,6) node {$\bullet$};
\draw (0,7) node {$\bullet$};
\draw (0,8) node {$\bullet$};
\draw (0,9) node {$\bullet$};
\draw (0,10) node {$\bullet$};
\draw (1,-3) node {$\bullet$};
\draw (1,-2) node {$\bullet$};
\draw (1,-1) node {$\bullet$};
\draw (1,0) node {$\bullet$};
\draw (1,1) node {$\bullet$};
\draw (1,2) node {$\bullet$};
\draw (1,3) node {$\bullet$};
\draw (1,4) node {$\bullet$};
\draw (1,5) node {$\bullet$};
\draw (1,6) node {$\bullet$};
\draw (1,7) node {$\bullet$};
\draw (1,8) node {$\bullet$};
\draw (1,9) node {$\bullet$};
\draw (1,10) node {$\bullet$};
\draw (2,-3) node {$\bullet$};
\draw (2,-2) node {$\bullet$};
\draw (2,-1) node {$\bullet$};
\draw (2,0) node {$\bullet$};
\draw (2,1) node {$\bullet$};
\draw (2,2) node {$\bullet$};
\draw (2,3) node {$\bullet$};
\draw (2,4) node {$\bullet$};
\draw (2,5) node {$\bullet$};
\draw (2,6) node {$\bullet$};
\draw (2,7) node {$\bullet$};
\draw (2,8) node {$\bullet$};
\draw (2,9) node {$\bullet$};
\draw (2,10) node {$\bullet$};
\draw (3,-3) node {$\bullet$};
\draw (3,-2) node {$\bullet$};
\draw (3,-1) node {$\bullet$};
\draw (3,0) node {$\bullet$};
\draw (3,1) node {$\bullet$};
\draw (3,2) node {$\bullet$};
\draw (3,3) node {$\bullet$};
\draw (3,4) node {$\bullet$};
\draw (3,5) node {$\bullet$};
\draw (3,6) node {$\bullet$};
\draw (3,7) node {$\bullet$};
\draw (3,8) node {$\bullet$};
\draw (3,9) node {$\bullet$};
\draw (3,10) node {$\bullet$};
\draw (4,-3) node {$\bullet$};
\draw (4,-2) node {$\bullet$};
\draw (4,-1) node {$\bullet$};
\draw (4,0) node {$\bullet$};
\draw (4,1) node {$\bullet$};
\draw (4,2) node {$\bullet$};
\draw (4,3) node {$\bullet$};
\draw (4,4) node {$\bullet$};
\draw (4,5) node {$\bullet$};
\draw (4,6) node {$\bullet$};
\draw (4,7) node {$\bullet$};
\draw (4,8) node {$\bullet$};
\draw (4,9) node {$\bullet$};
\draw (4,10) node {$\bullet$};
\draw (5,-3) node {$\bullet$};
\draw (5,-2) node {$\bullet$};
\draw (5,-1) node {$\bullet$};
\draw (5,0) node {$\bullet$};
\draw (5,1) node {$\bullet$};
\draw (5,2) node {$\bullet$};
\draw (5,3) node {$\bullet$};
\draw (5,4) node {$\bullet$};
\draw (5,5) node {$\bullet$};
\draw (5,6) node {$\bullet$};
\draw (5,7) node {$\bullet$};
\draw (5,8) node {$\bullet$};
\draw (5,9) node {$\bullet$};
\draw (5,10) node {$\bullet$};
\draw (6,-3) node {$\bullet$};
\draw (6,-2) node {$\bullet$};
\draw (6,-1) node {$\bullet$};
\draw (6,0) node {$\bullet$};
\draw (6,1) node {$\bullet$};
\draw (6,2) node {$\bullet$};
\draw (6,3) node {$\bullet$};
\draw (6,4) node {$\bullet$};
\draw (6,5) node {$\bullet$};
\draw (6,6) node {$\bullet$};
\draw (6,7) node {$\bullet$};
\draw (6,8) node {$\bullet$};
\draw (6,9) node {$\bullet$};
\draw (6,10) node {$\bullet$};
\draw (7,-3) node {$\bullet$};
\draw (7,-2) node {$\bullet$};
\draw (7,-1) node {$\bullet$};
\draw (7,0) node {$\bullet$};
\draw (7,1) node {$\bullet$};
\draw (7,2) node {$\bullet$};
\draw (7,3) node {$\bullet$};
\draw (7,4) node {$\bullet$};
\draw (7,5) node {$\bullet$};
\draw (7,6) node {$\bullet$};
\draw (7,7) node {$\bullet$};
\draw (7,8) node {$\bullet$};
\draw (7,9) node {$\bullet$};
\draw (7,10) node {$\bullet$};
\draw (8,-3) node {$\bullet$};
\draw (8,-2) node {$\bullet$};
\draw (8,-1) node {$\bullet$};
\draw (8,0) node {$\bullet$};
\draw (8,1) node {$\bullet$};
\draw (8,2) node {$\bullet$};
\draw (8,3) node {$\bullet$};
\draw (8,4) node {$\bullet$};
\draw (8,5) node {$\bullet$};
\draw (8,6) node {$\bullet$};
\draw (8,7) node {$\bullet$};
\draw (8,8) node {$\bullet$};
\draw (8,9) node {$\bullet$};
\draw (8,10) node {$\bullet$};
\draw (9,-3) node {$\bullet$};
\draw (9,-2) node {$\bullet$};
\draw (9,-1) node {$\bullet$};
\draw (9,0) node {$\bullet$};
\draw (9,1) node {$\bullet$};
\draw (9,2) node {$\bullet$};
\draw (9,3) node {$\bullet$};
\draw (9,4) node {$\bullet$};
\draw (9,5) node {$\bullet$};
\draw (9,6) node {$\bullet$};
\draw (9,7) node {$\bullet$};
\draw (9,8) node {$\bullet$};
\draw (9,9) node {$\bullet$};
\draw (9,10) node {$\bullet$};
\draw (10,-3) node {$\bullet$};
\draw (10,-2) node {$\bullet$};
\draw (10,-1) node {$\bullet$};
\draw (10,0) node {$\bullet$};
\draw (10,1) node {$\bullet$};
\draw (10,2) node {$\bullet$};
\draw (10,3) node {$\bullet$};
\draw (10,4) node {$\bullet$};
\draw (10,5) node {$\bullet$};
\draw (10,6) node {$\bullet$};
\draw (10,7) node {$\bullet$};
\draw (10,8) node {$\bullet$};
\draw (10,9) node {$\bullet$};
\draw (10,10) node {$\bullet$};

\end{tikzpicture}
\caption{The set $\ls$ for a pair of ``transversal" trefoils with linking number 4}
\label{fig: ls two cusps}
\end{figure}
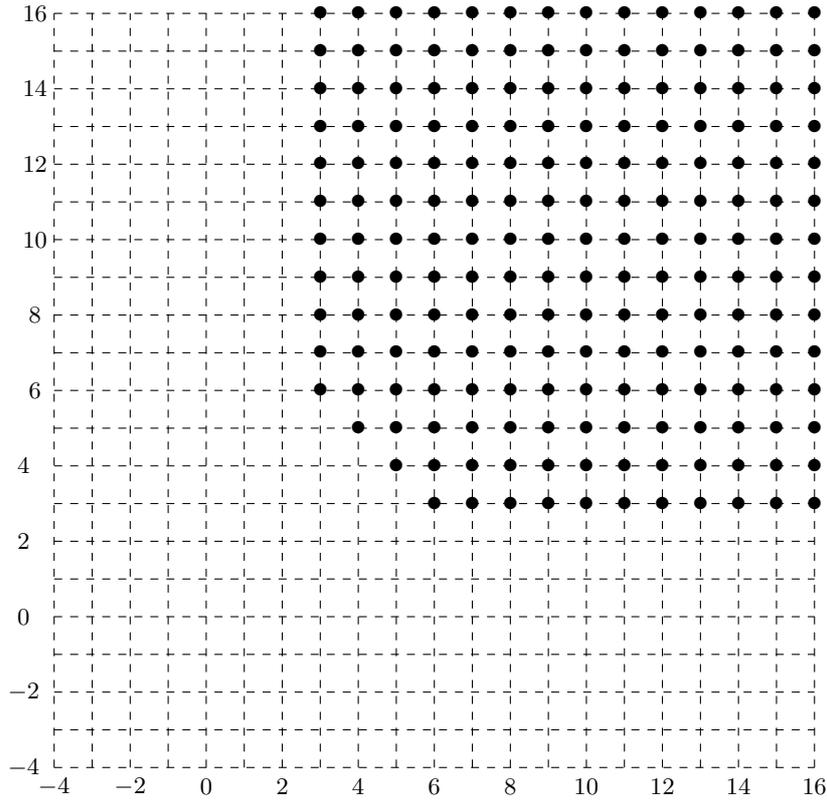

Still, there are large classes of L--space links such that $\ls(L)$  is unbounded from below.
\begin{example}
\label{th:cables}
Suppose  that $K$ is an L--space knot, $m$ and $n$ are
 positive coprime integers and $n/m>2g(K)-1$. By \cite{GHom}, the two-component cable link $K_{2m,2n}$ is an L--space link.  Then  the set $\ls(K_{2m,2n})$ is unbounded from below.
For the proof and for other examples see section \ref{s:EXAMPLES}.
\end{example}
\subsection{}

For algebraic 2--component links
the next Theorem \ref{th:alg alex}
characterizes completely all cases when $\ls(L)$ is unbounded from below.


Consider a plane curve singularity germ $C=C_1\cup C_2\subset (\BC^2,0)$  with two  components. Its intersection $L=L_1\cup L_2$ with a
small sphere centered at the origin is called an algebraic link. By \cite{GN1} all algebraic links are L--space links.

 Let $\Delta(t_1,t_2)=\sum a_{v_1,v_2}t_1^{v_1}t_2^{v_2}=\sum_{v\in \BZ^2} a_vt^v$ denote the
  Alexander polynomial
of $L=L_1\cup L_2$. It is also a complete invariant of the
embedded topological type \cite{Yam}. For its relation with other invariants
 and several properties see \cite{GN2}. The relation between $\Delta$, the   $h$--functions
  and the semigroup of the singularity is reviewed in Subsection \ref{ss:algfacts}.
 It is known  that
\begin{equation}\label{eq:av}
a_v\in \{0,1\} \ \ \mbox{for all $v$}.
\end{equation}
 Define the set
$
\Supp(\Delta)=\{v\in \BZ^2:a_v=1\}
$
and the  partial order on $\BZ^2$ by
$$
(u_1,u_2)\preceq (v_1,v_2)\ \Leftrightarrow\ u_1\le v_1\ \text{and}\ u_2\le v_2.
$$
We say that $\Delta$ is {\em of ordered type}, if for all $u,v\in \Supp(\Delta)$ one has either $u\preceq v$ or
$v\preceq u$.

Furthermore, each $L_i$ is an  iterated torus knot,  and as such, whenever it is non--trivial
there exists a unique integer $m_i$ such that
$S^3_{m_i}(L_i)$ is reducible. If $L_i$ is the unknot then  we set $m_i=1$.

\begin{theorem}
\label{th:alg alex}
For  a 2--component  algebraic link $L$  the following facts are equivalent:
\begin{enumerate}
\item $\ls(L)$ is bounded from below;
\item the intersections of $\ls(L)$ with the
lines $\{m_1\}\times \BZ$ and $\BZ\times \{m_2\}$ are both bounded from below,
\item there exists a very good point $v\in\BZ^2$ for $L$; 
\item  $\Delta(L)$ is not of ordered type.
\end{enumerate}
\end{theorem}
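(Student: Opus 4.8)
The plan is to prove the four conditions equivalent by running the cycle $(4)\Rightarrow(3)\Rightarrow(1)\Rightarrow(2)\Rightarrow(4)$. Two of these links are essentially free. The implication $(3)\Rightarrow(1)$ is exactly Theorem \ref{th:bounded}, since a very good point forces $\ls(L)$ to be bounded from below. The implication $(1)\Rightarrow(2)$ is immediate: if $\ls(L)\subset \BZ_{\ge N_1}\times\BZ_{\ge N_2}$, then its intersection with each of the lines $\{m_1\}\times\BZ$ and $\BZ\times\{m_2\}$ is trivially bounded from below. All the real content is thus concentrated in $(4)\Rightarrow(3)$ and $(2)\Rightarrow(4)$, and I would prove the latter in contrapositive form: if $\Delta$ is of ordered type, then at least one of the two distinguished line-sections of $\ls(L)$ is unbounded from below, so $(2)$ fails.

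For $(4)\Rightarrow(3)$ I would work entirely with the dictionary between the Alexander polynomial and the $h$-function reviewed in Subsection \ref{ss:algfacts}. Writing each coefficient $a_v$ as the mixed second difference of $h$ in the two lattice directions — so that by \eqref{eq:av} this difference lies in $\{0,1\}$ — the set $\Supp(\Delta)$ records exactly the lattice points at which $h$ genuinely turns a corner in \emph{both} directions. The condition that $v$ be good, namely $h(v_1,v_2)>h_1(v_1)$ and $h(v_1,v_2)>h_2(v_2)$, says precisely that neither coordinate of $v$ has yet saturated the corresponding one-variable $h$-function, i.e. that $v$ witnesses two-dimensional growth of $h$ (cf. Definition \ref{very good}). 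I would then show that an antichain in $\Supp(\Delta)$, which exists exactly when $\Delta$ is not of ordered type, produces such a two-dimensional corner and hence a good point. Finally, the symmetry of the Alexander polynomial — the combinatorial shadow of the duality $v\mapsto v^*$ — makes $\Supp(\Delta)$ invariant under $v\mapsto v^*$, and I would use this invariance to place the good point so that $v^*$ is good as well, upgrading it to a very good point.

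For $(2)\Rightarrow(4)$ I argue the contrapositive through the geometry of the reducible surgeries. If $\Delta$ is of ordered type then $\Supp(\Delta)$ is a single $\preceq$-chain, which on the level of the embedded resolution is the degenerate, cable-like configuration. In that case, after performing the reducible surgery $S^3_{m_1}(L_1)$ — a connected sum one of whose summands is a lens space — the second component $L_2$ is carried to an unknot in the complementary summand. Consequently $S^3_{m_1,d_2}(L)$ becomes a connected sum of a lens space with a surgery on an unknot, hence a connected sum of lens spaces, for all but at most one value of $d_2$; such manifolds are L-spaces, so $(\{m_1\}\times\BZ)\cap\ls(L)$ is unbounded from below (symmetrically one may instead land on the $\BZ\times\{m_2\}$ line). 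Identifying the purely combinatorial hypothesis ``$\Supp(\Delta)$ is a chain'' with this reducible configuration is where I would lean on the semigroup description of $\Supp(\Delta)$ and on the computation underlying Example \ref{th:cables}.

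The main obstacle, I expect, is precisely this step $(2)\Rightarrow(4)$: converting the combinatorial statement that $\Supp(\Delta)$ is a chain into the concrete topological assertion that the reducible surgery on one component unknots the other. This requires matching all three pictures at once — the support of $\Delta$, the semigroup of the singularity, and the embedded resolution graph — and it is exactly the point at which the resolution-graph characterization promised in the abstract does the genuine work. By contrast the implication $(4)\Rightarrow(3)$ should reduce to finite bookkeeping once the $h$-versus-$\Delta$ dictionary and the symmetry $v\mapsto v^*$ are in place.
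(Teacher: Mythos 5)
Your cycle $(4)\Rightarrow(3)\Rightarrow(1)\Rightarrow(2)\Rightarrow(4)$ and three of its links coincide with the paper's argument: $(3)\Rightarrow(1)$ is Theorem \ref{th:bounded}, $(1)\Rightarrow(2)$ is immediate, and your sketch of $(4)\Rightarrow(3)$ is essentially Lemma \ref{alg very good} (an antichain in $\Supp(\Delta)$ gives a rectangle of good points via Lemma \ref{alg good}, upgraded to very good points by the symmetry $v\mapsto c-{\bf 1}-v$ of the support). The problem is the last link, which is where you correctly predict the difficulty lies but propose a mechanism that is false.

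Your claim for $(2)\Rightarrow(4)$ is that if $\Delta$ is of ordered type, the reducible surgery $S^3_{m_1}(L_1)$ carries $L_2$ to an unknot in a complementary summand, so that $S^3_{m_1,d_2}(L)$ is a connected sum of lens spaces. This fails already for the paper's Example \ref{big example} (the trefoil together with its $(3,20)$--cable, cf.\ Example \ref{two puiseux}): $\Delta$ is of ordered type and neither component is parallel to the other, but after the $0$--splitting at the supporting vertex the relevant component of the remaining graph is star--shaped with three legs, so the image of the other link component is a knot in a Seifert fibered rational homology sphere with three exceptional fibers --- not an unknot in a lens space --- and the surgered manifolds are not connected sums of lens spaces. (Your picture is correct only in the \emph{parallel} case, e.g.\ the cables $K_{2m,2n}$ of Example \ref{th:cables}, which is Lemma \ref{th:simple to unbounded}(a); ordered type is strictly weaker than parallelism.) What actually makes the large negative surgeries L--spaces is not unknottedness but the condition that the supporting vertex has multiplicity $1$ in the Artin minimal cycle of $\Gamma\setminus v_j$, i.e.\ is \emph{simple} in the sense of Definition \ref{def simple}; this is Theorem \ref{th: l space simple vertex}, proved by running Laufer's algorithm. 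Hence the real content of $(2)\Rightarrow(4)$ is the equivalence of ``$\Delta$ of ordered type'' with ``parallel, or one supporting vertex simple in the complementary graph'' --- condition (5) of Theorem \ref{th:alg alex2} --- whose hard direction occupies Theorems \ref{prop:splice1} and \ref{th:ordered to simple}: a case analysis of splice diagrams using the determinantal identities of Lemmas \ref{BraunN} and \ref{lem: inequalities for z and w}, Torres' formula via Lemma \ref{alex on a line}, and a Lipman--cone construction of a cycle with $E_{v_1}$--multiplicity $1$. Your proposal replaces all of this with an unsupported topological assertion, so the implication $(2)\Rightarrow(4)$ remains unproved.
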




The proof uses several ingredients, including theory of normal surface singularities and
classification and properties of algebraic plane curve singularities.
In fact, we even add another equivalent criterion to the above list, which is formulated
in terms of the Artin's minimal cycle
\cite{Artin62,Artin66} (associated with
negative definite graph manifolds).


\subsection{}
The organization of the paper is the following.

In section \ref{s:LL1} we introduce notations and we recall basic facts regarding L--space links.

In section \ref{s:LL2} we recall the needed results regarding
 Link Floer homology and  surgery complexes  (following
\cite{Liu} and \cite{MO}) and we prove Theorem \ref{th:bounded}.

In section \ref{ss:graph mfd} we treat the combinatorics of connected negative definite graphs.
The interest in them is motivated by the fact that graph manifolds associated with such graphs are exactly the
links of normal surface singularities. For such 3--manifold, by a result of  second author \cite{Nnew},
being an  $L$--space can be reinterpreted by the `rationality' of the graph  (in the sense of
Artin \cite{Artin62,Artin66}).
We discuss properties of rational graphs, including Laufer's algorithm \cite{Laufer72}, one of the main tools
of the present note. Here a key `simplicity' property is also introduced.

We also  prove the next
 general statement of independent interest (see section \ref{ss:graph mfd} for all necessary definitions).

\begin{theorem}
Let $Y$ be a graph manifold corresponding to the negative definite rational graph $\Gamma$, and let $K_v $ be the knot in $Y$ corresponding to a vertex $v$ of $\Gamma$.
Then $Y_d(K_v)$ is an $L$ space for $d\ll 0$ if and only if the coefficient of $E_v$ in the minimal cycle of $\Gamma$ equals 1.
\end{theorem}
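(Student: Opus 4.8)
The plan is to translate the surgery into an operation on plumbing graphs and then apply the rationality criterion for L--spaces. First I would recall from plumbing calculus that the vertex knot $K_v$ is the generic $S^1$--fiber over $E_v$, and that $Y_d(K_v)$ is again a graph manifold $Y(\Gamma_d)$, where $\Gamma_d$ is obtained from $\Gamma$ by attaching a single new leaf $w$ to $v$ whose weight tends to $-\infty$ as $d\to -\infty$ (the weight of $v$ and all other decorations being unchanged). For $d\ll 0$ the new diagonal entry dominates, so $\Gamma_d$ is again negative definite; by \cite{Nnew} the assertion ``$Y(\Gamma_d)$ is an L--space'' is then equivalent to ``$\Gamma_d$ is rational''. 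Thus the theorem reduces to the purely combinatorial claim: for $d\ll 0$, the graph $\Gamma_d$ is rational if and only if the coefficient $m_v$ of $E_v$ in $Z_{\min}(\Gamma)$ equals $1$.

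Second, I would use Artin's and Laufer's criteria \cite{Artin62,Artin66,Laufer72}: a negative definite graph is rational if and only if $\chi(Z)\ge 1$ for every effective cycle $Z>0$, equivalently if and only if $\chi(Z_{\min})=1$, where $\chi(Z)=-\tfrac12(Z,Z+K)$. The basic tool is the additivity $\chi(A+B)=\chi(A)+\chi(B)-(A,B)$ together with $\chi(E_w)=1$ for the genus--zero leaf $E_w$. Since $w$ is attached only to $v$, one has $(Z_{\min}(\Gamma),E_w)=m_v$; because $\Gamma$ is rational, $\chi(Z_{\min}(\Gamma))=1$, and therefore the effective cycle $Z:=Z_{\min}(\Gamma)+E_w$ on $\Gamma_d$ satisfies $\chi(Z)=1+1-m_v=2-m_v$. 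If $m_v\ge 2$ this exhibits an effective cycle with $\chi(Z)\le 0$, i.e. $p_a(Z)\ge 1$, so $\Gamma_d$ is not rational and $Y_d(K_v)$ is not an L--space. This settles the ``only if'' direction without any sequence bookkeeping.

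For the converse (if $m_v=1$ then $\Gamma_d$ is rational for $d\ll 0$) I would run Laufer's algorithm \cite{Laufer72} on $\Gamma_d$ starting from $E_w$ and show that every increment equals $1$, which forces $\chi(Z_{\min}(\Gamma_d))=1$. The first step adds $E_v$ with increment $(E_w,E_v)=1$; one then follows a fixed rational computation sequence of $\Gamma$ that computes $Z_{\min}(\Gamma)$ from $E_v$, all of whose increments equal $1$ by rationality of $\Gamma$. Because $w$ touches only $v$, the cycle $E_w$ changes intersection numbers only along $E_v$, raising $(\,\cdot\,,E_v)$ by exactly $1$; since $m_v=1$, the vertex $v$ is never re--added during this part of the sequence, so all increments remain $1$. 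If the strengthened inequality $(\,\cdot\,,E_v)\le -1$ forced by the leaf is not yet met at the end, namely in the case $(Z_{\min}(\Gamma),E_v)=0$, one continues Laufer's algorithm, which now computes the minimal cycle of $\Gamma$ subject to $(\,\cdot\,,E_v)\le -1$. The crux, and the main obstacle, is to prove that this continuation again proceeds with all increments equal to $1$ and never produces an increment $\ge 2$; this is precisely where the ``simplicity'' of a coefficient--$1$ vertex enters, and I expect it to follow from the computation--sequence characterization of rational graphs together with an induction controlling the neighbors of $v$. Combining the two directions yields the stated equivalence.
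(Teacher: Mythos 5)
Your reduction to the rationality of the augmented graph $\Gamma_d$ is the same as the paper's, and your argument for the ``only if'' direction is correct and genuinely different: the paper extracts $n=1$ from a single Laufer testing number $(Z_{\min}(\Gamma),E_{new})$, whereas you exhibit the explicit effective cycle $Z_{\min}(\Gamma)+E_w$ with $\chi=2-m_v\le 0$ and invoke Artin's criterion. That computation is sound (note that $\chi$ of a cycle supported on $\Gamma$ is the same whether computed in $\Gamma$ or in $\Gamma_d$, and $\chi(E_w)=1$ independently of the weight $d'$), and it is arguably cleaner than the sequence bookkeeping.

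The ``if'' direction, however, has a genuine gap, and you have located it yourself: everything after the cycle $E_w+Z_{\min}(\Gamma)$ is asserted only as an expectation. When $(Z_{\min}(\Gamma),E_v)=0$ the algorithm must re-add $E_v$ and then propagate, and it is not at all automatic that the subsequent increments stay equal to $1$, nor that the process terminates before $(\,\cdot\,,E_w)$ becomes positive; this continuation is the actual content of the theorem in that case. The paper resolves it by introducing the nested chain of connected components of Tjurina vertices containing $v$, namely $\Gamma\supsetneq\Delta_1\supsetneq\Delta_2\supsetneq\cdots\supsetneq\Delta_k=\emptyset$ where $\Delta_{i+1}$ consists of the vertices $u$ of $\Delta_i$ with $(Z_{\min}(\Delta_i),E_u)=0$, and shows that the algorithm successively reaches $E_w+Z_{\min}(\Gamma)+Z_{\min}(\Delta_1)+\cdots+Z_{\min}(\Delta_{k-1})$. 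Two ingredients are needed that your sketch does not supply: (i) a ``general fact'' about subgraphs of rational graphs (if $\Delta\subset\Delta'$ is a subgraph of a rational graph and an edge of $\Delta'$ leaves $\Delta$ at a vertex $u$, then the $E_u$--coefficient of $Z_{\min}(\Delta)$ is $1$), which controls the intersection numbers $(\,\cdot\,,E_u)$ at vertices of $\Gamma\setminus\Delta_1$ adjacent to $\Delta_1$ after $Z_{\min}(\Delta_1)$ has been added; and (ii) the quantitative condition $d'\le -k$, since each round increases $(\,\cdot\,,E_w)$ by $1$ (because each $Z_{\min}(\Delta_i)$ again has $E_v$--coefficient $1$), which is exactly why the statement holds only for $d\ll 0$ and why, e.g., the $A_9$ example in the paper has a genuine gap of non--L--space slopes. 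Without an argument of this type (or the equivalent result of L\^e--Tosun that the paper cites), the converse direction is not established.
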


In section \ref{s:alglinks} we discuss invariants of algebraic links: semigroup, Alexander polynomial,
$h$--function, and several relations connecting them. We also establish certain `arithmetical'
properties of determinants of subgraphs,
 which will be crucial in the discussion of the orderability of the support of the
Alexander polynomial.

In section \ref{s:Trivi} we characterize the ($d_1\gg 0$, $d_2\ll 0$) region of $\ls(L)$ via the following results.

\begin{theorem}
\label{unknotintro}
(a)  Assume  that $L\subset S^3$ is an L--space link with two components,
 and  $S^{3}_{d_1,d_2}(L)$ is an L--space for some integers $d_1\gg 0, d_2\ll 0$.
Then $L_2$ is an unknot.


(b) Assume  that $L$ is an algebraic link with two components associated with the curve singularity
$(C,0)\subset (\BC^2,0)$. Then the following facts are equivalent.

(1)  $L_2$ is an unknot, or equivalently, $(C_2,0)$ is smooth;

(2)  $(d_1,d_2)\in \ls(L)$  for any  $d_1\gg 0$ and  $d_2\ll 0$;

(3)  $(d_1,d_2)\in \ls(L)$  for any  $d_1\geq m_1$ and  $d_2\ll 0$;

(4) if\, $\Gamma$ is the embedded resolution graph of $(C,0)\subset (\BC^2,0)$, and
$v_2$ supports the arrowhead of $L_2$ then $v_2$ is simple vertex of $\Gamma$.
\end{theorem}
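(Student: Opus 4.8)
The plan is to prove (a) purely in Heegaard Floer theory and then bootstrap the four equivalences in (b) from (a) together with the resolution-graph/rationality machinery. For part (a) I would start from the Manolescu--Ozsv\'ath link surgery formula \cite{MO} in Liu's form \cite{Liu}, which computes $\widehat{\HF}(S^3_{d_1,d_2}(L))$ as the homology of a surgery mapping cone built from the link Floer complexes, splitting over the $\BZ/d_1$--set of ``$L_1$--slices''. Since $d_1\gg 0$ the large surgery formula on the first component applies: for all but $O(1)$ central slices the $L_1$--Alexander grading is extremal, the link complex stabilizes to the knot Floer complex of the sublink $L_2$, and the $d_2$--surgery map on such a slice computes $\widehat{\HF}(S^3_{d_2}(L_2))$. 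Summing over the $\ge d_1-O(1)$ non-central slices gives
\[
\mathrm{rk}\,\widehat{\HF}\big(S^3_{d_1,d_2}(L)\big)\ \ge\ \big(d_1-O(1)\big)\cdot \mathrm{rk}\,\widehat{\HF}\big(S^3_{d_2}(L_2)\big).
\]
Because $S^3_{d_1,d_2}(L)$ is an L--space its rank equals $|d_1d_2-l^2|=d_1|d_2|+l^2$, while $\mathrm{rk}\,\widehat{\HF}(S^3_{d_2}(L_2))\ge |d_2|$ with equality iff $S^3_{d_2}(L_2)$ is an L--space. If it were not an L--space the right side would exceed $d_1|d_2|+l^2$ for $d_1$ large, a contradiction; hence $S^3_{d_2}(L_2)$ is an L--space. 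Running the identical count for $d_2\gg 0$ (where the total surgery is an L--space since $L$ is an L--space link) shows $L_2$ is an L--space knot, so Theorem \ref{th: ls knots} applied at the given $d_2\ll 0<0$ forces $L_2$ to be the unknot. The only delicate point is the stabilization identification of the non-central slices with the $d_2$--surgery complex of $L_2$, which I would read off from the grading and filtration structure of the surgery complex in \cite{MO,Liu}.

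For part (b) the implications (3)$\Rightarrow$(2) and (2)$\Rightarrow$(1) are immediate: the range $d_1\ge m_1$ contains all $d_1\gg 0$, and (2) furnishes an extreme L--space surgery to which part (a) applies. It therefore remains to establish (1)$\Rightarrow$(4) and (4)$\Rightarrow$(3); combined with the previous two these close the cycle (1)$\Rightarrow$(4)$\Rightarrow$(3)$\Rightarrow$(2)$\Rightarrow$(1) and, together with (1)$\Rightarrow$(4), yield (1)$\Leftrightarrow$(4). Throughout I would pass to the embedded resolution graph $\Gamma$ of $(C,0)\subset(\BC^2,0)$, so that the relevant surgeries become plumbed graph manifolds and, by \cite{Nnew}, the L--space property is equivalent to negative-definiteness of $\Gamma$ together with rationality in the sense of Artin; rationality is checked by Laufer's algorithm \cite{Laufer72}.

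For (1)$\Leftrightarrow$(4) I would argue directly in $\Gamma$: the branch $(C_2,0)$ is smooth exactly when its strict transform is separated by the minimal number of blow-ups, so that the arrowhead of $L_2$ attaches to a vertex $v_2$ whose exceptional curve $E_{v_2}$ enters the minimal (Artin) cycle of $\Gamma$ with coefficient $1$---which is precisely simplicity of $v_2$. The converse, that a singular $(C_2,0)$ forces this coefficient to exceed $1$, I would extract from the arithmetic of subgraph determinants established earlier in the paper. For (4)$\Rightarrow$(3) I would present $S^3_{d_1,d_2}(L)$ for $d_1\ge m_1$ and $d_2\ll 0$ as the plumbing obtained from $\Gamma$ by decorating the two arrowhead vertices with the surgery framings, and show that this graph is negative definite and rational for every such pair. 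The simplicity of $v_2$ (coefficient $1$ in the minimal cycle) is exactly what allows the very negative $d_2$--surgery on $L_2$ to be absorbed without destroying rationality; this is the content of the general graph-manifold statement on negative surgeries quoted in the excerpt, applied at the vertex $v_2$.

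The main obstacle is the sharpness of the threshold in (4)$\Rightarrow$(3): proving that rationality persists for \emph{every} $d_1\ge m_1$, not merely for $d_1\gg 0$, and that it can fail just below $m_1$. This requires a careful run of Laufer's algorithm on the decorated graph near both arrowheads, and identifying $m_1$ intrinsically in $\Gamma$ as the reducible-surgery coefficient of the iterated torus knot $L_1$, i.e.\ the value at which the computation sequence on the $v_1$--side stabilizes. The simple-vertex hypothesis on $v_2$ must be used precisely to keep the $L_2$--side rational as $d_2\to-\infty$. The stabilization step underlying part (a) is the other point demanding care, though it is by now standard for the link surgery formula.
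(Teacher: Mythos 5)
Your part (a) is essentially the paper's argument: the paper quotes \cite[Theorem 1.10]{Liu} to get that the components are L--space knots and quotes \cite{OSsurg2,MO} for the fact that if the large surgery $Y_{d_1}(L_1)$ on $Y=S^3_{d_2}(L_2)$ is an L--space then $Y$ is too; your rank count over the stabilized slices is an unpacking of that cited fact, and the conclusion via Theorem \ref{th: ls knots} is identical. This part is sound, granting the standard stabilization step you flag.

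In part (b), however, your plan for $(4)\Rightarrow(3)$ has a genuine gap. Theorem \ref{th: l space simple vertex}(b) governs a \emph{single} very negative surgery on a knot $K_v$ in a fixed rational graph, whereas here you face a two-vertex surgery: the vertex attached at $v_1$ carries framing $d_1-m_1\ge 0$, so the naively decorated graph is not even negative definite, and before you can invoke the theorem at $v_2$ you must absorb the $v_1$-surgery into the graph (for $d_1>m_1$, by trading the positively framed vertex for a chain of $(-2)$'s). This produces a $d_1$-dependent \emph{supergraph} $\Gamma''\supset\Gamma$, and you would then need (i) that $v_2$ remains simple in $\Gamma''$ --- but simplicity descends to subgraphs, not to supergraphs, since $Z_{\min}$ can only grow when vertices are added --- and (ii) that the threshold ``$d_2\ll 0$'' supplied by the theorem (in its proof, $-k$ with $k$ the depth of the nested Tjurina subgraphs, hence graph-dependent) is uniform in $d_1$. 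Neither point is addressed. The paper sidesteps both: for $d_1>m_1$ it uses condition (1) (unknottedness of $L_2$, equivalent to (4) by Remark \ref{rem:analsimple}) to append one more $(-1)$-vertex to the chain and blow the whole $v_1$-side down to a plumbing of the lens space $L(d_2,1)$, so that $\Gamma'_{-1}$, and hence its subgraph $\Gamma'$, is rational by Corollary \ref{cor:rat}; the simple-vertex theorem enters only in the boundary case $d_1=m_1$, where $0$-splitting removes $v_1$ and one is left with a genuine single surgery at $v_2$ on the \emph{subgraph} $\Gamma\setminus v_1$, to which simplicity does descend. Your $(1)\Leftrightarrow(4)$ is only sketched, but the idea you describe (the $E_{v_2}$-coefficient of $Z_{\min}$ equals the multiplicity of the branch $C_2$) is exactly Remark \ref{rem:analsimple}, so that part is recoverable. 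I would therefore restructure your argument as $(4)\Rightarrow(1)\Rightarrow(3)$ and use the unknottedness of $L_2$ to trivialize the $v_1$-side, rather than trying to run Laufer's algorithm on the $d_1$-dependent enlarged graph.
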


Some parts of Theorem \ref{th:alg alex} follow from the constructions and results established in different sections.
Section \ref{s:PROOFS} finishes the proof. In section \ref{s:EXAMPLES}
we present several  examples illustrating the main results.

\subsection{}
Recently appeared  articles \cite{HRRW,RR,R} discuss the set of {\it rational} L--space filling slopes
$\ls_{\BQ}(L)\subset \BQ^2$
for a 3--manifold with torus boundary.  Clearly, $\ls(L)=\ls_{\BQ}(L)\cap \BZ^2$. It  follows from \cite[Theorem 1.6]{RR} that every horizontal (or vertical) section of $\ls_{\BQ}(L)$ is either empty, or it is an interval
(maybe consisting of one point or half-infinite) or it is a
complement to an interval. This result combined with our statement does not
prove the analogue of Theorem \ref{th:alg alex} for $\ls_\BQ(L)$. We will come back to this extension
(and other relations with  \cite{RR}) in a forthcoming  work.

\subsection{Acknowledgements}
The authors are grateful to Jennifer Hom, Yajing Liu, Sarah Rasmussen and Jacob Rasmussen for the useful discussions.
E. G. would like to thank R\'enyi Mathematical Institute (Budapest, Hungary)
for the hospitality, and Russian Academic Excellence Project 5-100. Many computations of Heegaard Floer homology for surgeries on algebraic links were done with the help of the program \cite{JProgram} written by Jonathan Hanselman.

\section{L--spaces and L--space links}\label{s:LL1}

\subsection{L--spaces}\label{ss:gm}
Given a 3--manifold $M$, we denote by $\HFminus(M)$  the
minus version  of its Heegaard Floer homology of $M$, cf.  \cite{OS}. It
canonically splits as a direct sum over the $spin^c$ structures of $M$:
$$
\HFminus(M)=\bigoplus_{s\in H_1(M)}\HFminus(M,s).
$$
$\HFminus(M)$ admits an action of an operator $U$ of homological degree $(-2)$,  which preserves this decomposition.
\begin{definition}
A rational homology sphere $M$ is called an L--space, if $\HFminus(M,s)$ is isomorphic as $\F[U]$-module to $\F[U]$ for all $s$.
\end{definition}
We are mostly interested in rational homology spheres, and
specifically in graph manifolds.
%
An important family of graph manifolds are given by links of complex
normal  surface singularities: they are graph manifolds associated with
connected negative definite graphs. In this way the link constitute a bridge between topological and analytical invariants. This is reflected totally in the next characterization of L--spaces given by
the second author.

\begin{theorem}[\cite{Nnew}]\label{th:ratL}
A  graph manifold associated with a connected and  negative definite
plumbing graph is an L--space if and only if the graph is rational.
\end{theorem}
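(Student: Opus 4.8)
\emph{The plan is to} compute the Heegaard Floer homology of the plumbed manifold by the combinatorial graded--root / lattice--cohomology model, and to match the vanishing of its reduced part with Artin's rationality criterion via Laufer's algorithm. Write $M=M(\Gamma)$ for the graph manifold of the connected negative definite graph $\Gamma$, with lattice $L=\BZ\langle E_v\rangle_v$, intersection form $(\cdot,\cdot)$, canonical class $K$ (normalized by $(E_v,E_v)+(E_v,K)=-2$), and dual lattice $L'\supset L$. Since every L--space is a rational homology sphere, while a rational graph is automatically a tree with all vertex--genera $0$ (a cycle or a positive genus forces positive geometric genus, hence non--rationality), I would first reduce both implications to the case where $\Gamma$ is a tree of spheres. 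Then $M$ is a rational homology sphere and its $\mathrm{spin}^c$ structures are indexed by $H:=L'/L$; moreover the L--space property is insensitive to orientation, so I may freely pass between $M$ and $-M$.

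First I would recall the combinatorial model. Set the Riemann--Roch weight function
$$
\chi(x)=-\tfrac{1}{2}\big((x,x)+(x,K)\big),\qquad x\in L,
$$
whose defining property is $\chi(E_v)=1$ for every $v$; for a $\mathrm{spin}^c$ class represented by $k\in L'$ one uses the shifted function $\chi_k(x)=\chi(x)-(x,k)$, and forms the associated graded root $R_k$, whose value at level $n$ records the connected components of the sublevel set $\{x\in L:\chi_k(x)\le n\}$. By the computation of Ozsv\'ath--Szab\'o for graphs with one bad vertex and its extension through graded roots in \cite{Nnew}, there is an isomorphism of graded $\F[U]$--modules $\HF^{+}(-M,k)\cong \mathbb{H}^0(\Gamma,k)$ up to grading shift, with matching reduced parts $\HF^{+}_{\mathrm{red}}(-M,k)\cong \mathbb{H}^0_{\mathrm{red}}(\Gamma,k)$. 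Because $M$ is a rational homology sphere, $M$ is an L--space precisely when $\HF^{+}_{\mathrm{red}}(M,k)=0$ for all $k$, that is, when every $R_k$ is a single infinite stem with a unique bottom vertex.

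The heart of the argument is the combinatorial equivalence
$$
\text{every graded root } R_k \text{ is trivial}\iff \Gamma \text{ is rational}.
$$
For the direction ``rational $\Rightarrow$ trivial'' I would invoke Laufer's algorithm \cite{Laufer72}: rationality means that a Laufer computation sequence can be run with $(Z_i,E_{v_{i+1}})=1$ at every increment, so that $\chi$ stays constant along a connected path descending from any lattice point toward the minimal cycle. This monotone descent shows each sublevel set $\{\chi_k\le n\}$ is connected, hence $R_k$ has one vertex at each level and is trivial; running it in each coset $k+L$ treats all $\mathrm{spin}^c$ structures at once. For the converse I would use Artin's criterion \cite{Artin62,Artin66}: non--rationality is $p_a(Z_{\min})>0$, equivalently $p_g>0$, and via $p_g=\dim_{\mathbb C}\mathbb{H}^0_{\mathrm{red}}(\Gamma,k_{\mathrm{can}})$ this produces a nonzero reduced lattice cohomology in the canonical class; concretely, the failure of the Laufer descent yields two local minima of $\chi_{k_{\mathrm{can}}}$ in distinct components of a common sublevel set, i.e.\ a genuine branching of the graded root.

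The main obstacle is confined to this middle step and is twofold. The substantial analytic--combinatorial input is the identification of $\HF^{+}$ with the graded root, which I would quote from \cite{Nnew} rather than reprove. The more delicate point for this particular statement is that rationality is, a priori, a condition on the canonical cycle, whereas the L--space property ranges over every coset of $L$ in $L'$; I would resolve this by showing that the monotone--descent property of Laufer's algorithm, once available for the minimal cycle, propagates through the lattice--translation structure of the functions $\chi_k$, so that triviality of the canonical graded root forces triviality of all of them.
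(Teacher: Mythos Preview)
The paper does not prove this statement at all: it is quoted as a black box from \cite{Nnew}, with no argument given. So there is no ``paper's own proof'' to compare against beyond the citation itself.

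As for your sketch on its merits: the overall architecture (reduce to trees of spheres; interpret the L--space condition via lattice cohomology / graded roots; identify triviality of all graded roots with rationality via Laufer) is the right picture, but there is a real gap at the step you flag as ``the substantial analytic--combinatorial input.'' The isomorphism $\HF^{+}(-M,k)\cong \mathbb{H}^0(\Gamma,k)$ for \emph{arbitrary} negative definite plumbing trees is not available to you: at the time of \cite{Nnew} this was known only for restricted classes (e.g.\ almost rational graphs, \cite{N,OSplumb}), and the general equality was a conjecture. Quoting it ``from \cite{Nnew}'' is circular, since that is precisely the source of the theorem you are trying to prove; and in any case \cite{Nnew} does not establish that general isomorphism. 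What one actually has in general is an inequality / spectral--sequence comparison, and the argument in \cite{Nnew} proceeds more indirectly: one direction (rational $\Rightarrow$ L--space) can indeed be obtained because rational graphs fall into the class where the Ozsv\'ath--Szab\'o/graded--root computation is valid, but the converse (non--rational $\Rightarrow$ not an L--space) needs a separate obstruction rather than the unavailable full identification.

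A second, smaller issue: your last paragraph proposes to deduce triviality of \emph{all} graded roots from triviality of the canonical one by ``propagating the monotone--descent property through lattice translations.'' This is not automatic; the functions $\chi_k$ for different cosets are genuinely different, and one must run the Laufer--type argument in each coset separately (which is fine and standard), not infer it from the canonical case alone.
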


Rational graphs are described  in a  purely combinatorial way,
for more details see \cite{Artin62,Artin66, Laufer72} and section \ref{ss:graph mfd} here.
Since they are stable by taking subgraphs or decreasing the Euler decorations
of the graph (see \cite{Laufer72}),  one has
 the following.

\begin{corollary}[\cite{Nnew}]\label{cor:rat}
Suppose that a negative definite graph $\Gamma$ defines an L--space (e.g. it represents $S^3$).
If\,  $\Gamma'$ is either a subgraph of\,  $\Gamma$, or it is obtained from $\Gamma$ by decreasing the
Euler decorations, then  $\Gamma'$ defines an L--space too.
\end{corollary}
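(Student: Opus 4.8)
The plan is to route the statement through the rationality criterion of Theorem \ref{th:ratL}, so that the topology disappears and only the combinatorics of negative definite graphs remains. By that theorem the hypothesis ``$\Gamma$ defines an L--space'' is equivalent to ``$\Gamma$ is rational''; hence if I can show that $\Gamma'$ is again negative definite and rational (reducing to its connected components when necessary), then a second application of Theorem \ref{th:ratL} immediately yields that $\Gamma'$ defines an L--space. Thus the entire proof reduces to two monotonicity properties of negative definite rational graphs under the two allowed operations. The parenthetical case ``$\Gamma$ represents $S^3$'' is simply the instance in which $\Gamma$ is rational because $S^3$ is an L--space.

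First I would verify that negative definiteness persists, which is what makes Theorem \ref{th:ratL} applicable to $\Gamma'$. Writing $M$ for the intersection matrix of $\Gamma$: if $\Gamma'$ is a subgraph, then its intersection matrix is the principal submatrix of $M$ indexed by the vertices of $\Gamma'$, and principal submatrices of negative definite symmetric matrices are negative definite; if $\Gamma'$ is obtained by decreasing the Euler decorations, then its matrix is $M-D$ for a diagonal $D\ge 0$, and $x^{\top}(M-D)x=x^{\top}Mx-x^{\top}Dx<0$ for every $x\neq 0$. A subgraph may be disconnected, but the plumbed manifold of a disjoint union is the connected sum of the manifolds of the pieces, hence an L--space precisely when each connected piece is; so I may argue one connected component of $\Gamma'$ at a time.

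Second, I would establish rationality of $\Gamma'$ using the characterization (Artin \cite{Artin62,Artin66}, Laufer \cite{Laufer72}) that a negative definite graph is rational if and only if $p_a(Z)\le 0$ for every effective cycle $Z>0$ on it, where $p_a(Z)=1+\tfrac12\big(Z\cdot Z+Z\cdot K\big)$ and $K$ is fixed by the adjunction relations $E_v\cdot K=-2-E_v^2$. For a subgraph the key observation is that $p_a(Z)$ depends only on the multiplicities of $Z$, on the self--intersections $E_v^2$, and on the pairwise intersections inside the support of $Z$; all of these are unchanged on $\Gamma'$, so $p_a$ computed in $\Gamma'$ agrees with $p_a$ computed in $\Gamma$ on every cycle supported on $\Gamma'$, and such cycles form a subfamily of the cycles of $\Gamma$. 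Hence $p_a\le 0$ on $\Gamma$ forces $p_a\le 0$ on $\Gamma'$. For the Euler--decoration case one checks that the coefficient of $E_v^2$ in $Z\cdot Z+Z\cdot K$ equals $m_v(m_v-1)\ge 0$, where $m_v$ is the multiplicity of $E_v$ in $Z$; so making $E_v^2$ more negative cannot increase $p_a(Z)$, and the inequality $p_a(Z)\le 0$ survives. In either case $\Gamma'$ is rational.

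The only genuinely substantive ingredient is the rationality characterization itself, which is Artin's and is recalled together with Laufer's algorithm in Section \ref{ss:graph mfd}; granting it, both monotonicity statements are the short computations just indicated, and the two applications of Theorem \ref{th:ratL} do the rest. The mild subtlety to keep track of is the bookkeeping of the canonical cycle $K$ under the two operations: under decreasing Euler numbers the value $E_v\cdot K=-2-E_v^2$ itself changes, yet the combination $Z\cdot Z+Z\cdot K$ stays monotone as stated, while under restriction the adjunction values $E_v\cdot K$ are literally the same constants, which is exactly why $p_a$ is intrinsic to the support and the subgraph case is immediate.
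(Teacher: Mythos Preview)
Your argument is correct. You route through Theorem \ref{th:ratL} exactly as the paper does, reducing the topological claim to the combinatorial stability of rationality, and your verification of negative definiteness and of the disconnected-subgraph bookkeeping is clean.

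Where you diverge from the paper is in the rationality step. The paper (see the paragraph after Laufer's Criterion in Section \ref{ss:graph mfd}) argues via Laufer's computation sequence: for a subgraph or for decreased decorations, a computation sequence for $\Gamma'$ can be read off as an initial segment of one for $\Gamma$, and the testing numbers $(z_i,E_{v(i)})$ at each step can only stay the same or drop, so they remain equal to $1$. You instead invoke Artin's criterion $p_a(Z)\le 0$ for all effective $Z>0$, and verify monotonicity of $p_a$ directly via the coefficient computation $m_v(m_v-1)\ge 0$. Both routes are standard and equally short; yours has the advantage that it requires no algorithm and makes the dependence on the self-intersections completely explicit, while the paper's Laufer argument has the advantage of being self-contained within the machinery it actually develops and uses elsewhere (Theorem \ref{th: l space simple vertex}). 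One small inaccuracy: the $p_a\le 0$ criterion is not in fact recalled in Section \ref{ss:graph mfd}; only Laufer's algorithm is stated there, so if you keep your approach you should cite Artin directly rather than pointing to that section.
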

\noindent
In this note we focus on surgery 3--manifolds $S^3_{d_1,\ldots, d_r}(L)$,
where $L=\{L_i\}_{i=1}^r$ is a link of $S^3$.
\begin{definition}
$L\subset S^3$ is called an L--space link, if the surgery  manifold $S^3_d(L)=S^3_{d_1,\ldots , d_r}(L)$ is an L--space for $d_i\gg 0$, $i=1,\ldots, r$.
\end{definition}
The basic examples we treat are the algebraic links
determined by (embedded) plane curve singularities
(however several of our results generalise for arbitrary links as well).
Algebraic plane curves are coded by their embedded resolution graphs,
which are connected negative definite graphs (representing $S^3$) endowed with
arrowhead vertices (representing the link components)
\cite{EN,Neumann}. Usually if $I$ is the intersection form of a graph $\Gamma$, then we define the determinant of $\Gamma$
 as $\det(\Gamma):=\det(-I)$. If the algebraic link is coded in the graph $\Gamma$, and the arrowhead of $L_i$ is supported by the vertex $v_i$ then we set $m_i:=
\det (\Gamma\setminus v_i)$.

\begin{theorem}[\cite{GN1}]
If $L$ is an algebraic link, and $d_i>m_i$ for all $i$,
then $S^3_d$ is an L--space. 
\end{theorem}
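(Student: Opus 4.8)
The plan is to present $S^{3}_{d_1,\dots,d_r}(L)$ as the boundary of a negative definite plumbing and then read off the L--space property from the rationality of that plumbing via Theorem~\ref{th:ratL}. Starting from the embedded resolution graph $\Gamma$ (which, after forgetting the arrowheads, is a connected negative definite graph representing $S^{3}$, so $\det(\Gamma)=1$), each component $L_i$ is a generic $S^{1}$--fibre over the exceptional curve $E_{v_i}$. Performing $d_i$--surgery on these fibres is a local operation in plumbing calculus: I would convert each arrowhead into an honest vertex $w_i$ joined to $v_i$, and check by a determinant computation that the correct Euler weight is $e_{w_i}=-(d_i+m_i)$. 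The input here is the identity $(E^{*}_{v_i},E^{*}_{v_j})=(I_\Gamma^{-1})_{v_iv_j}$; since $\det(\Gamma)=1$, a cofactor expansion gives $-(I_\Gamma^{-1})_{v_iv_i}=\det(\Gamma\setminus v_i)=m_i$ and $-(I_\Gamma^{-1})_{v_iv_j}=\operatorname{lk}(L_i,L_j)=:l_{ij}$ for $i\neq j$. This is exactly the step that introduces the invariants $m_i$, and it explains why they are the relevant quantities.

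Next I would establish negative definiteness of the enlarged graph $\Gamma_d$. Writing its intersection matrix in block form with $I_\Gamma$ in one corner and $\operatorname{diag}(e_{w_i})$ in the other, the Schur complement along the (already negative definite) block $I_\Gamma$ is the $r\times r$ matrix $\operatorname{diag}(e_{w_i})-\big((I_\Gamma^{-1})_{v_iv_j}\big)=\operatorname{diag}(e_{w_i}+m_i)+(l_{ij})_{i\neq j}=\operatorname{diag}(-d_i)+(l_{ij})_{i\neq j}$. Thus $\Gamma_d$ is negative definite if and only if $\operatorname{diag}(d_i)-(l_{ij})_{i\neq j}$ is positive definite. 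Because $-I_\Gamma^{-1}$ is positive definite, Cauchy--Schwarz applied to $E^{*}_{v_i},E^{*}_{v_j}$ yields $l_{ij}^{2}\le m_im_j$, and the hypothesis $d_i>m_i$ then forces the required positivity (for $r=2$, one has $d_1d_2>m_1m_2\ge l^{2}$ together with $d_i>0$). Hence $d_i>m_i$ for all $i$ guarantees that $\Gamma_d$ is a connected negative definite plumbing graph whose boundary is $\pm S^{3}_{d}(L)$.

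It then remains to prove that $\Gamma_d$ is rational, since Theorem~\ref{th:ratL} converts this into the L--space property. Here I would exploit that $\Gamma$ itself is rational (it represents $S^{3}$) together with Corollary~\ref{cor:rat}: increasing $d_i$ makes $e_{w_i}$ more negative, i.e.\ decreases an Euler decoration, and rationality is preserved under such a decrease, so it suffices to treat the extremal case $d_i=m_i+1$ for all $i$. For this base case I would run Laufer's algorithm (recalled in Section~\ref{ss:graph mfd}) on $\Gamma_d$: the computation sequence of the minimal cycle restricts over $\Gamma$ to the terminating sequence with $\chi\equiv1$ coming from $S^{3}$, while each leaf $w_i$ is absorbed with coefficient $1$ --- this is where $|e_{w_i}|=d_i+m_i$ being large compared with the fundamental--cycle multiplicity $z_{v_i}$ is used --- so that $\chi$ of the minimal cycle stays equal to $1$. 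Therefore $\Gamma_d$ is rational and $S^{3}_{d}(L)$ is an L--space.

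The main obstacle is this last step in the multi--component case. For a single component negative definiteness essentially already forces rationality, but for $r\ge 2$ the negative definite region $\{d_i>0,\ d_1d_2>l^{2}\}$ is strictly larger than $\ls(L)$, so rationality carries genuine content. The technical heart is therefore the Laufer bookkeeping: bounding the multiplicities $z_{v_i}$ and verifying that, once $d_i>m_i$, the two (or more) leaves are simultaneously absorbed with coefficient $1$ without raising $\chi$, with the inequality $l_{ij}^{2}\le m_im_j$ governing how the linking between components interacts with the minimal cycle.
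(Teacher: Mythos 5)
Your overall strategy --- realize $S^3_d(L)$ as a plumbed manifold and invoke Theorem \ref{th:ratL} --- is indeed the approach of \cite{GN1}, but the execution rests on a sign error that is fatal. The new vertex replacing the arrowhead of $L_i$ must carry Euler weight $d_i-m_i$, not $-(d_i+m_i)$: with the convention $\det(\Gamma_{d'})=-d'\det(\Gamma)-\det(\Gamma\setminus v_i)$ of Section \ref{ss:graph mfd}, a determinant computation only pins down the surgery coefficient up to sign, and you chose the wrong branch. The graph you build is negative definite, but it represents $S^3_{-d_1,\dots,-d_r}(L)$, so your argument, if completed, would prove that all sufficiently \emph{negative} surgeries on an algebraic link are L--spaces --- false already for the trefoil by Theorem \ref{th: ls knots}. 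The failure surfaces exactly at the step you call the technical heart: for the trefoil graph (central $(-1)$--vertex with $(-2)$-- and $(-3)$--legs) with a $(-13)$--leaf attached (your graph for $d=7$, $m=6$), Laufer's algorithm absorbs all four vertices with testing number $1$ but is then forced to add the central vertex again with testing number $(z,E_c)=-1+1+1+1=2$, so the graph is not rational. The leaves are not ``absorbed with coefficient $1$ without raising $\chi$''; this is precisely why large negative surgeries on nontrivial L--space knots fail to be L--spaces.

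With the correct weight $d_i-m_i>0$ the enlarged graph is not negative definite at all (the new vertex has positive self--intersection), so Theorem \ref{th:ratL} cannot be applied to it directly; your Schur--complement paragraph, while correct as linear algebra (Cauchy--Schwarz for $-I_\Gamma^{-1}$ does give $l_{ij}^2\le m_im_j$), establishes definiteness of the wrong matrix. The actual proof first applies plumbing calculus: after arranging $e_{v_i}=-1$, the positive leaf of weight $d_i-m_i$ is traded for a chain of $d_i-m_i$ vertices of weight $-2$; the resulting graph is negative definite, and its rationality is obtained not by running Laufer on it directly but by appending a $(-1)$--vertex to the end of each chain, blowing everything down to recover a rational graph (one representing $S^3$, respectively a lens space), and quoting Corollary \ref{cor:rat} (rationality is inherited by subgraphs and by decreasing decorations). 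This is exactly the argument reproduced in the proof of Theorem \ref{th:corner}. Note also that once the sign is fixed your reduction to the extremal case $d_i=m_i+1$ goes the wrong way: increasing $d_i$ \emph{increases} the decoration (lengthens the chain), and the monotonicity actually used is the subgraph stability just described.
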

In fact, if the supporting vertices $v_i$ are all distinct,
then $S^3_d(L)$ is an L--space whenever $d_i\geq m_i$ for all $i$, cf. \cite{GN1}.
For algebraic links and for any $d\in \BZ^r$, the surgery manifolds $S^3_d(L)$ are graph manifolds, see e.g. \cite{NSurg,NGr}.
The construction of these graphs  runs as follows. Given a plane curve singularity $C$,
consider its (not necessarily minimal)
embedded  good resolution obtained by a sequence of blowups.
Let $\Gamma$ be the  dual graph and $\{v_i\}_i$ the supporting vertices of the
arrowheads representing $\{L_i\}_i$ as above.
Then we obtain the graph of  $S^3_d(L)$  from $\Gamma$
if we replace each arrowhead representing $L_i$  by a genuine vertex (connected by an edge to
$v_i$) and endow it with self-intersection $d_i-m_i$.
(We supply here another interpretation of the integers
$m_i$: if $C_i$ is the curve component providing $L_i$,
and $v_i$ is the vertex representing the irreducible exceptional curve $E_i$, then $m_i$ is the  multiplicity along $E_i$ of the total transform of $C_i$.)

\subsection{Notations} Regarding links and their surgeries we adopt the following notations.

Define a partial order on $\BZ^r$ by
$
u\preceq v \ \text{if}\ u_i\le v_i\ \text{for all}\ i.
$
For $u,v\in \BZ^r$ set
$$
\inf(u,v):=(\min(u_1,v_1),\ldots,\min(u_r,v_r)),\ \sup(u,v):=(\max(u_1,v_1),\ldots,\max(u_r,v_r)).
$$

 If $L$ is a link with $r$ components then define   $L_K$ as
  the sub--link whose components are indexed
by the  subset $K\subset \{1,\ldots,r\}$.
Let $l_{ij}$ denote the linking number between the components $L_i$ and $L_j$ ($i\not=j$).
Following \cite{Kirby},
to a vector $(d_1,\ldots,d_r)$ of surgery coefficients
we associate the {\em framing matrix} $\Lambda=\Lambda(d)$ with entries
$$
\Lambda_{ij}=\begin{cases}
d_i\ \ \ \text{if}\ i=j\\
l_{ij}\ \ \ \text{if}\ i\neq j.\\
\end{cases}
$$
We will denote the $i$-th row of $\Lambda$ by $\Lambda_i$, and for $K\subset \{1,\ldots,r\}$ define
$
\Lambda_K:=\sum_{i\in K}\Lambda_i.
$
E.g., for $r=2$, we get (with $l=l_{12}$)
$$
\Lambda=\left(\begin{matrix}
d_1 & l\\
l & d_2\\
\end{matrix}
\right).
$$
If $S^3_d(L)$ is a rational homology sphere then the order of its first homology is $|\det(\Lambda)|$.

We define the vector $c(L)=(c_1,\ldots,c_r)$ by $c_i=2g(L_i)+\sum_{j\neq i}l_{ij}$.
Given $v\in \BZ^r$, we set
  $$v^*:=c(L)-v.$$
  For $K\subset \{1,\ldots,r\}$, we define $v_K$ as the projection of $v$ to the coordinate subspace labeled by $K$.
Finally,  $\Kbar:=\{1,\ldots,r\}\setminus K$. We work over the field $\F=\BZ/2\BZ$.

\section{Link Floer homology and surgeries on L--space links}\label{s:LL2}

In this section we describe the multi-component version  of the surgery complex, following \cite{Liu} and \cite{MO}.
We assume that $L$ is an L--space link, then by \cite[Lemma 1.10]{Liu}  all its sublinks $L_K$ are L--space links too.

\subsection{Link Floer homology}\label{ss:LFH}
An $r$--component link $L$ in $S^3$ defines a $\BZ^r$ filtration (called the Alexander filtration) on the Heegaard Floer complex for $S^3$ \cite{OSlink}. This filtration is usually labeled by the lattice
$$
\BH(L):=\BZ^r+\lkv,\ \text{where}\  \lkv:=(l_1,\ldots,l_r),\ l_i=
(\textstyle{\sum_{j\neq i}}l_{ij})/2.
$$
For every sublink $L_K\subset L$ there is a natural projection map
$$
\pi_K:\BH(L)\to \BH(L_K),\ \pi_k(v)=(v-\lkv(L))_{K}+\lkv(L_K).
$$
However, by technical reasons (to match with the Hilbert function of algebraic links and with the notations of \cite{GN2}) we prefer to work with the lattice $\BZ^r$ instead of $\BH(L)$ and reverse the direction of the Alexander filtration. This is done via the map of lattices $\phi_L:\BH(L)\to \BZ^r$
\begin{equation}
\label{eqn: phi}
v\mapsto \phi_{L}(v):=-v+c(L)/2.
\end{equation}
Then in the following diagram of projections commute: 

\begin{center}
\begin{tabular}{ccc}
$\BH(L)$ & $\stackrel{\phi_{L}}{\longrightarrow}$ & $\BZ^r$\\
$\pi_K\downarrow$ & & $\downarrow \small{v \mapsto v_K}$\\
$\BH(L_K)$& $\stackrel{\phi_{L_K}}{\longrightarrow}$ & $\BZ^{|K|}$\\
\end{tabular}
\end{center}

With these notations, we define  a subcomplex $A_{K}^{-}(v):=A_{L_K}^{-}(v_K)$ for every $v\in \BZ^r$, which depends only on
the projection $v_K$ onto the sublattice labeled by $K$.
It is spanned by the generators with $i$-th Alexander filtration
greater than or equal to $v_i$
for all $i\in K$. It is known \cite{OSlink} that
\begin{equation}
\label{stabilization}
A^{-}(v)\simeq A_{K}^{-}(v)\ \text{if}\ v_i\ll 0\ \text{for}\ i\notin K.
\end{equation}

If $L$ is an L--space link, it follows from \cite[Theorem 10.1]{MO} that
$H_{*}(A_{K}^{-}(v))\simeq \F[U][-2h_K(v)]$, where $h_K(v)=h_K(v_K)$ is a certain integer-valued function.
(This is the definition of the $h$--function.)
It is proven in \cite{GN2} that this
function is completely determined by the multi-variable Alexander polynomial of  $L_K$. It follows from
\eqref{stabilization} (or see \cite{GN2}) that
\begin{equation}
\label{stabilization h}
h(v)=h_{K}(v)\ \text{if}\ v_i\ll 0\ \text{for}\ i\notin K.
\end{equation}
The function $h$ is weakly increasing:
$$
h(v)\ge h(w)\ \text{if}\ v\succeq w,
$$
$h(v)\ge 0$ for all $v$,  and
\begin{equation}
\label{h increase}
h_{K_2}(v)\ge h_{K_1}(v)\ \text{if}\ K_1\subset K_2.
\end{equation}
We will also need the next symmetry property of
 the $h$--function (cf.  \cite[Lemma 5.5]{Liu}):
\begin{equation}
\label{symmetry}
h(v^*)=h(v)-|v|+|c(L)|/2.
\end{equation}
Note that after applying $\phi_L$, one gets $\phi_L(-v)=c(L)-\phi_L(v).$
This yields a simpler equation
$$
h(\phi_L(-v))=h(\phi_L(v))+|v|,
$$
 which is more standard in Heegaard Floer literature.

\subsection{Maps between subcomplexes}

Let $z_K(v)$ denote the generator in the homology of $A_{K}^{-}(v)$.
For $K_1\subset K_2$ the complex $A_{K_2}^{-}(v)$ is a subcomplex of the complex
$A_{K_1}^{-}(v)$, so one can define the inclusion maps
$$
j_{K_1,K_2}:A_{K_2}^{-}(v)\hookrightarrow A_{K_1}^{-}(v)
$$
such that for $K_1\subset K_2\subset K_3$ one has $j_{K_1,K_2}\circ j_{K_2,K_3}=j_{K_1,K_3}$.
It is proven in \cite{GN2} that $j_{K_1,K_2}$ does not vanish on homology, in fact,
\begin{equation}
\label{eqn: j}
j_{K_1,K_2}(z_{K_2}(v))=U^{h_{K_2}(v)-h_{K_1}(v)}z_{K_1}(v).
\end{equation}
Set, as above, the dual point $v^*=c(L)-v$. For every $K$, the dual point $(v_K)^*
:= c(L_K)-v_K$ of  $v_K$ is determined in the projected lattice $\BZ^{|K|}$.
From the definition directly follows the next
\begin{lemma}
\label{lem:dual proj}
 $(v_K)^*$  and $v^*$ are related by
$
(v_K)^*=(v^*-\Lambda_{\overline{K}})_K.
$
\end{lemma}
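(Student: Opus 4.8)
The plan is to verify the claimed identity $(v_K)^*=(v^*-\Lambda_{\overline{K}})_K$ coordinate by coordinate, since both sides are vectors in $\BZ^{|K|}$ and the projection $(\cdot)_K$ only retains the coordinates indexed by $K$. So I would fix an index $i\in K$ and compare the $i$-th entries. On the left, unwinding $(v_K)^*=c(L_K)-v_K$ gives, in the $i$-th slot, the value $2g(L_i)+\sum_{j\in K,\,j\neq i}l_{ij}-v_i$: the component $L_i$ is the same knot inside the sublink $L_K$, so its genus is unchanged, while the linking-number part of $c(L_K)_i$ only sums over those $j\in K$ that survive the restriction.

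Next I would expand the right-hand side. First, $(v^*)_i=c(L)_i-v_i=2g(L_i)+\sum_{j\neq i}l_{ij}-v_i$, where the sum now runs over all $j\in\{1,\dots,r\}\setminus\{i\}$. Then I would unpack the correction term using $\Lambda_{\overline{K}}=\sum_{m\in\overline{K}}\Lambda_m$, whose $i$-th coordinate is $\sum_{m\in\overline{K}}\Lambda_{mi}$. Because $i\in K$, every $m\in\overline{K}$ satisfies $m\neq i$, so each relevant entry of $\Lambda$ is off-diagonal, namely $\Lambda_{mi}=l_{mi}=l_{im}$ by symmetry of the linking numbers; hence $(\Lambda_{\overline{K}})_i=\sum_{m\in\overline{K}}l_{im}$.

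Subtracting, the $i$-th coordinate of $v^*-\Lambda_{\overline{K}}$ equals $2g(L_i)+\sum_{j\neq i}l_{ij}-\sum_{m\in\overline{K}}l_{im}-v_i$. The decisive step is then to split the full sum $\sum_{j\neq i}l_{ij}$ into its $K$-part and $\overline{K}$-part; the $\overline{K}$-part cancels exactly against the correction term, leaving $\sum_{j\in K,\,j\neq i}l_{ij}$, which is precisely the linking-number contribution appearing on the left. Thus the $i$-th coordinates agree for every $i\in K$, and since $i\in K$ means this coordinate is preserved by the outer projection $(\cdot)_K$, the identity follows.

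I do not expect a genuine obstacle here: the content is entirely the bookkeeping of linking numbers. The only points that require care are invoking the symmetry $l_{mi}=l_{im}$ to identify the off-diagonal entries of $\Lambda$ contributing to $\Lambda_{\overline{K}}$, and observing that the genus terms $2g(L_i)$ are untouched by passage to the sublink and so cancel automatically, so that the whole discrepancy between $c(L)_i$ and $c(L_K)_i$ is absorbed by the single correction vector $\Lambda_{\overline{K}}$.
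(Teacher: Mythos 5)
Your coordinate-by-coordinate verification is correct and is exactly the direct computation the paper has in mind: it states that the identity ``follows directly from the definition'' and omits the details, which are precisely your bookkeeping of the genus term and the splitting of $\sum_{j\neq i}l_{ij}$ into its $K$- and $\overline{K}$-parts. No issues.
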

\noindent One can also define another, the `dual' map
$$
j^{\vee}_{K_1,K_2}:A^{-}_{K_2}((u_{K_2})^*)\to A^{-}_{K_1}((u_{K_1})^*)
$$
by the equation
\begin{equation}
\label{eqn: j dual}
j^{\vee}_{K_1,K_2}(z_{K_2}((u_{K_2})^*))=U^{h_{K_2}(u)-h_{K_1}(u)}z_{K_1}((u_{K_1})^*).
\end{equation}
\begin{lemma}
For\,  $K_1\subset K_2$\, the following equation holds:
\begin{equation}
\label{eqn: j dual 2}
j^{\vee}_{K_1,K_2}(z_{K_2}(v))=U^{h_{K_2}\left(v^*-\Lambda_{\overline{K_2}}\right)-h_{K_1}\left(v^*-\Lambda_{\overline{K_2}}\right)}z_{K_1}\left(v-\Lambda_{K_2-K_1}\right).
\end{equation}
\end{lemma}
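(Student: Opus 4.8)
The plan is to deduce the general formula \eqref{eqn: j dual 2} directly from the defining equation \eqref{eqn: j dual} by a single change of variable, so that no new input is required: the map $j^{\vee}_{K_1,K_2}$ has already been pinned down on every generator of the form $z_{K_2}((u_{K_2})^*)$, and since $A^{-}_{K_2}(\cdot)$ depends only on the $K_2$--projection of its argument, it suffices to produce a $u\in\BZ^r$ for which $(u_{K_2})^*$ agrees with the given $v$ on $K_2$, and then read off the right--hand side of \eqref{eqn: j dual}. First I would set
\[
u:=v^*-\Lambda_{\overline{K_2}},\qquad\text{so that}\qquad u^*=c(L)-u=v+\Lambda_{\overline{K_2}},
\]
and apply Lemma \ref{lem:dual proj} with $K=K_2$ to get
\[
(u_{K_2})^*=\bigl(u^*-\Lambda_{\overline{K_2}}\bigr)_{K_2}=\bigl(v+\Lambda_{\overline{K_2}}-\Lambda_{\overline{K_2}}\bigr)_{K_2}=v_{K_2}.
\]
Hence $z_{K_2}((u_{K_2})^*)=z_{K_2}(v)$, and feeding this $u$ into \eqref{eqn: j dual} computes $j^{\vee}_{K_1,K_2}(z_{K_2}(v))$. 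The $U$--power there is $U^{h_{K_2}(u)-h_{K_1}(u)}$, which upon substituting $u=v^*-\Lambda_{\overline{K_2}}$ becomes exactly the exponent $h_{K_2}(v^*-\Lambda_{\overline{K_2}})-h_{K_1}(v^*-\Lambda_{\overline{K_2}})$ claimed in \eqref{eqn: j dual 2}.

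It then remains to identify the target generator $z_{K_1}((u_{K_1})^*)$. Applying Lemma \ref{lem:dual proj} again, now with $K=K_1$, and using $u^*=v+\Lambda_{\overline{K_2}}$, I obtain
\[
(u_{K_1})^*=\bigl(u^*-\Lambda_{\overline{K_1}}\bigr)_{K_1}=\bigl(v+\Lambda_{\overline{K_2}}-\Lambda_{\overline{K_1}}\bigr)_{K_1}.
\]
Here I would invoke the only combinatorial point in the argument: since $K_1\subset K_2$ one has $\overline{K_2}\subset\overline{K_1}$ with $\overline{K_1}\setminus\overline{K_2}=K_2\setminus K_1$, so that $\Lambda_{\overline{K_2}}-\Lambda_{\overline{K_1}}=-\Lambda_{K_2-K_1}$. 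Therefore $(u_{K_1})^*=(v-\Lambda_{K_2-K_1})_{K_1}$, whence $z_{K_1}((u_{K_1})^*)=z_{K_1}(v-\Lambda_{K_2-K_1})$, and assembling the exponent and the target yields precisely \eqref{eqn: j dual 2}.

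The whole computation is bookkeeping, and the one place to stay careful is this final step: one must treat $(\cdot)^*$ as the duality in the full lattice $\BZ^r$ while keeping in mind that $j^{\vee}$, the functions $h_K$, and the generators $z_K$ see only the relevant coordinate projections, so that the displayed equalities above are equalities of $K_2$-- respectively $K_1$--projections, not of vectors in $\BZ^r$ (this is what makes the choice of representative $u$ legitimate, as the $K_1$--data is determined by the $K_2$--data through $K_1\subset K_2$). I do not expect any genuine obstacle beyond correctly using the complement identity $\overline{K_1}\setminus\overline{K_2}=K_2\setminus K_1$; once that is in hand both the $U$--power and the shift $\Lambda_{K_2-K_1}$ emerge automatically, the content of the lemma being a reindexing of the definition \eqref{eqn: j dual}.
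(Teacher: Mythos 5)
Your argument is correct and is essentially the paper's proof: the paper substitutes $u=w^*$ and then $w=v+\Lambda_{\overline{K_2}}$ into \eqref{eqn: j dual}, which amounts to exactly your single substitution $u=v^*-\Lambda_{\overline{K_2}}$, and both arguments rest on Lemma \ref{lem:dual proj} together with the identity $\Lambda_{\overline{K_1}}=\Lambda_{\overline{K_2}}+\Lambda_{K_2-K_1}$. Your remark about tracking $K_1$-- and $K_2$--projections is a sound (if implicit in the paper) piece of bookkeeping; nothing further is needed.
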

\begin{proof}
After substitution $u=w^*$ and applying Lemma \ref{lem:dual proj} one transforms \eqref{eqn: j dual} into:
$$
j^{\vee}_{K_1,K_2}(z_{K_2}(w-\Lambda_{\overline{K_2}}))=U^{h_{K_2}(w^*)-h_{K_1}(w^*)}z_{K_1}(w-\Lambda_{\overline{K_1}}).
$$
Now substitute $w=v+\Lambda_{\overline{K_2}}$\, and use
$
w^*=v^*-\Lambda_{\overline{K_2}}$ \, and \ $w-\Lambda_{\overline{K_1}}=v-\Lambda_{K_2-K_1}
$.
\end{proof}

\begin{example}
If $i\in K$ then
$
j^{\vee}_{K-i,K}(z_{K}(v))=U^{h_{K}\left(v^*-\Lambda_{\overline{K}}\right)-h_{K-i}\left(v^*-\Lambda_{\overline{K}}\right)}z_{K-i}\left(v-\Lambda_{i}\right).
$
\end{example}


\subsection{Surgery complex}

The surgery complex is a direct sum $\Cc=\bigoplus_{K,v}A_{K}^{-}(v)$, see Figure \ref{fig1}.  The differential consists of three parts:
internal differential $\partial_{in}$ defined in each $A^{-}_{K}(v)$, and ``short" and ``long" differentials acting between different $A^{-}_{K}(v)$.
The ``short" differential sends $A_{K}^{-}(v)$ to $A_{K-i}^{-}(v)$ via the map $j_{K-i,K}(v)$ for all $i\in K$. The ``long" differential sends $A_{K}^{-}(v)$ to $A_{K-i}^{-}(v-\Lambda_i)$ and is given by the map $j^{\vee}_{K-i,K}(v)$ for all $i\in K$.  We refer to \cite[Lemma 5.5]{Liu} for further details and
for the proof
of the duality between the ``short" and ``long" parts of the differential. The complex decomposes into a direct sum of $|\det \Lambda|$ subcomplexes corresponding to $spin^c$-structures on the surgery manifold $S^{3}_d(L)$. We will write $\partial_{ext}$ for the sum of
``short" and ``long" differentials, so that $\partial=\partial_{in}+\partial_{ext}$.

\begin{remark}
In \cite{Liu,MO} and in the knot surgery formula  (which may be more familiar to experts in Heegaard Floer homology) the ``long" differential shifts the Alexander grading by $\Lambda_i$ rather than $(-\Lambda_i)$. This difference is caused by the equation \eqref{eqn: phi}, which
reverses the direction of all Alexander gradings.
\end{remark}

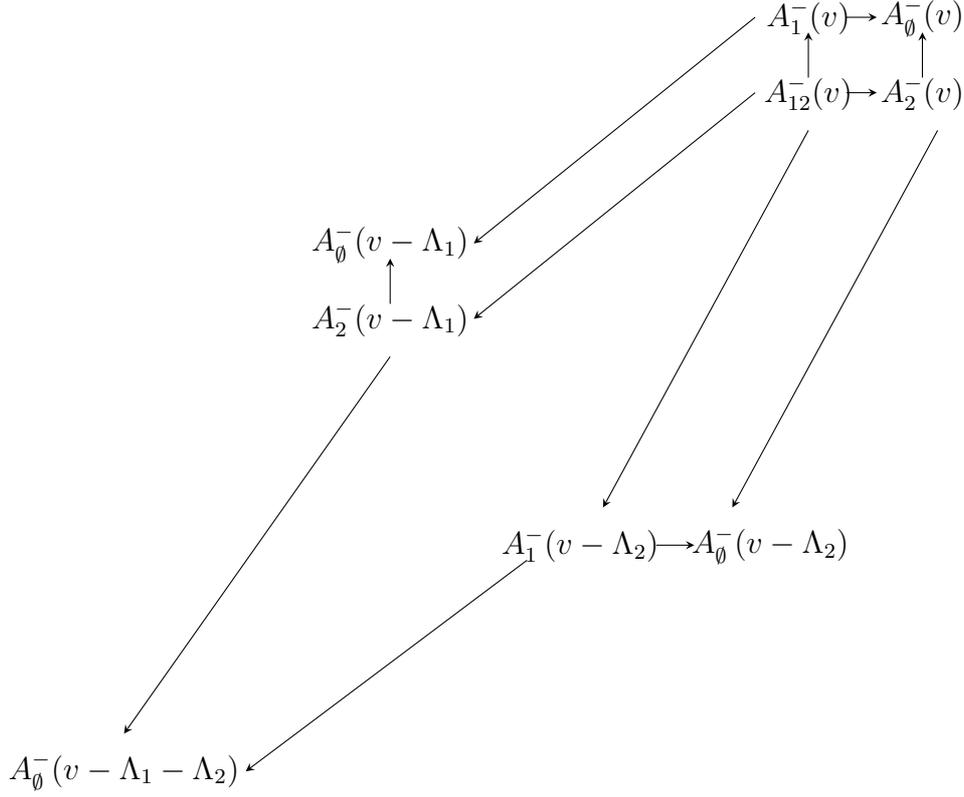
\begin{figure}[ht]
\begin{tikzpicture}
\draw (-0,-0) node {$A_{\emptyset}^{-}(v)$};
\draw (-0,-1) node {$A_{2}^{-}(v)$};
\draw (-1.5,-0) node {$A_{1}^{-}(v)$};
\draw (-1.5,-1) node {$A_{12}^{-}(v)$};
\draw (-2,-7) node {$A_{\emptyset}^{-}(v-\Lambda_2)$};
\draw (-4.5,-7) node {$A_{1}^{-}(v-\Lambda_2)$};
\draw (-7,-3) node {$A_{\emptyset}^{-}(v-\Lambda_1)$};
\draw (-7,-4) node {$A_{2}^{-}(v-\Lambda_1)$};
\draw (-10.5,-10) node {$A_{\emptyset}^{-}(v-\Lambda_1-\Lambda_2)$};

\draw [->,>=stealth] (-1.5,-1.5)--(-4.2,-6.5);
\draw [->,>=stealth] (0.2,-1.5)--(-2.5,-6.5);
\draw [->,>=stealth] (-2.2,-1)--(-5.9,-4);
\draw [->,>=stealth] (-2.2,-0)--(-5.9,-3);
\draw [->,>=stealth] (-5.2,-7.2)--(-8.9,-10);
\draw [->,>=stealth] (-7,-4.5)--(-10.5,-9.5);

\draw  [->,>=stealth] (-1,-0)--(-0.6,-0);
\draw  [->,>=stealth] (-1,-1)--(-0.6,-1);
\draw  [->,>=stealth] (-3.5,-7)--(-3,-7);
\draw  [->,>=stealth] (-0,-0.8)--(-0,-0.2);
\draw  [->,>=stealth] (-1.5,-0.8)--(-1.5,-0.2);
\draw  [->,>=stealth] (-7,-3.8)--(-7,-3.2);
\end{tikzpicture}
\caption{General scheme of Manolescu-Ozsv\'ath surgery complex }
\label{fig1}
\end{figure}

If we take the homology of $A_{K}^{-}(v)$ at each vertex (i. e. with respect to $\partial_{in}$), we get at every place $v$ a copy of $\F[U]$ generated by $z_{K}(v)$. On the homology of $\partial_{in}$ the external differential $\partial_{ext}$ (or $\partial$) induces
 the following differential (since we work over\, $\F$, we ignore the signs):
\begin{equation}
\label{main d}
\partial(z_{K}(v))=\sum_{i\in K}U^{h_K(v)-h_{K-i}(v)}z_{K-i}(v)+
U^{h_{K}\left(v^*-\Lambda_{\overline{K}}\right)-h_{K-i}\left(v^*-\Lambda_{\overline{K}}\right)}z_{K-i}\left(v-\Lambda_{i}\right).
\end{equation}
The complex is absolutely graded, and $\partial$ has homological degree $(-1)$.
 It is important to note that in general this complex may not give the Heegaard-Floer homology due to the presence of the higher differentials.
There is, however, a spectral sequence \cite{Lidman} such that $E_{\infty}=\HF^{-}(S^3_{d}(L))$ and $E_2=H_{*}(H_{*}(\Cc,\partial_{in}),\partial)$.



\begin{theorem}(\cite[Theorem 1.17]{Liu})
For two-component L--space links, the spectral sequence associated with the filtration on the link surgery complex degenerates at $E_2$ page,
hence
$$
\HF^{-}(S^3_{d_1,d_2}(L))\simeq H_{*}(\Cc,\partial)\simeq H_{*}(H_{*}(\Cc,\partial_{in}),\partial).
$$
\end{theorem}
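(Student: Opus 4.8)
The plan is to identify $\HF^{-}(S^3_{d_1,d_2}(L))$ with the homology of the full Manolescu--Ozsv\'ath surgery differential and then to show that, for two components, the unique potentially nonzero higher differential of the cube spectral sequence vanishes. Recall from \cite{MO} that the total surgery differential has the form $\mathcal{D}=\partial_{in}+\partial_{ext}+\Phi$, where $\partial_{in}$ preserves each summand $A^{-}_{K}(v)$, the ``short'' and ``long'' pieces $\partial_{ext}$ lower $|K|$ by one, and the diagonal term $\Phi$ (associated to the full oriented sublink $L_1\cup L_2$) lowers $|K|$ by two; by \cite{MO,Lidman} one has $\HF^{-}(S^3_{d}(L))\simeq H_{*}(\Cc,\mathcal{D})$. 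Filtering $\Cc$ by the cube grading $|K|\in\{0,1,2\}$, the associated graded differential is $\partial_{in}$, so $E_1=H_{*}(\Cc,\partial_{in})=\bigoplus_{K,v}\F[U]\,z_{K}(v)$ and the induced $d_1$ is exactly the map $\partial$ written in \eqref{main d}; hence $E_2=H_{*}(H_{*}(\Cc,\partial_{in}),\partial)$. As the filtration has length three, $d_r=0$ for $r\ge 3$, and the only remaining higher differential is $d_2$, induced by $\Phi$ and carrying classes represented by $z_{\{1,2\}}(v)$ to classes represented by $z_{\emptyset}(\cdot)$. Since $E_{3}=E_{\infty}=\HF^{-}$, the whole theorem reduces to the vanishing $d_2=0$; the second isomorphism in the statement is then formal, because $\partial=\partial_{in}+\partial_{ext}$ carries no cube-degree $(-2)$ part and its own cube spectral sequence already collapses at $E_2$.

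To prove $d_2=0$ I would first truncate the complex. Using the stabilization \eqref{stabilization} and \eqref{stabilization h}, together with the fact that the monomial maps of \eqref{main d} become isomorphisms or vanish once some coordinate $v_i$ is large in absolute value, the complex is quasi-isomorphic, $spin^c$ class by $spin^c$ class, to its restriction to a finite box in $\BZ^{2}$, so that the spectral sequence carries only finitely many nonzero terms. On this finite model I would run a grading argument: each $z_{K}(v)$ carries an absolute Maslov grading read off from the surgery formula together with the value $h_{K}(v)$, since $H_{*}(A^{-}_{K}(v))$ sits in degree $-2h_{K}(v)$ up to a framing-dependent shift. A spectral-sequence differential $d_2$ must respect the $spin^c$ decomposition and drop the total Maslov grading by exactly one while dropping the cube grading by two. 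The heart of the argument is to show that for any class surviving to $E_2$ in cube degree $2$ and any class surviving in cube degree $0$ in the same $spin^c$ structure, the relative Maslov grading --- controlled by $h(v)$, $h(v^{*}-\Lambda_{\overline{K}})$ and the entries of $\Lambda$ through \eqref{main d}, \eqref{eqn: j}, \eqref{eqn: j dual 2} and the symmetry \eqref{symmetry} --- is never $-1$, so $\Phi$ can only induce the zero map.

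The main obstacle is exactly this grading bookkeeping: one must pin down the absolute gradings, including the framing-dependent shifts, of the generators of $E_2$ that survive $\partial$, and then verify the grading incompatibility in every $spin^c$ class and for every surviving pair. The two-component hypothesis enters twice and is essential: it bounds the filtration length at three, so that $d_2$ is the only obstruction, and the duality \eqref{symmetry} between the short and long differentials constrains the gradings of the surviving $E_2$ classes tightly enough to force the mismatch. After truncation I expect the verification to reduce to a finite check organized by the geometry of the box and the monotonicity of $h$. As an alternative to the full grading computation, one could observe that the free part of $E_2$ already has $\F[U]$-rank $|\det\Lambda|$ in each $spin^c$ class, matching the free rank of $\HF^{-}$ of a rational homology sphere; since $d_2$ cannot raise free rank this forces $d_2$ to vanish on the free part, after which the residual torsion contributions are dispatched by the same grading estimate.
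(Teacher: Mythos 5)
First, a point of comparison: the paper does not prove this statement at all --- it is quoted from \cite[Theorem 1.17]{Liu} --- so there is no in-paper argument to measure yours against, and I can only assess the plan on its own terms. Your overall strategy (filter $\Cc$ by the cube degree $|K|$, note that for two components only $d_2$ can obstruct degeneration, and kill $d_2$ by a Maslov-grading obstruction) is the right one, but the plan has a genuine gap exactly at what you call its heart: you never establish that the relative Maslov grading between classes surviving in cube degrees $2$ and $0$ is ``never $-1$'', and the route you propose (truncation to a finite box, monotonicity of $h$, a finite case check) is both far heavier than necessary and not shown to terminate in that conclusion. The mechanism is simply parity, and it is already available from the paper's grading computations: by Corollary \ref{degree squares}, $\deg z_K(v)=\deg z_{\emptyset}(v)-2h_K(v)+|K|$, and by the preceding Lemma $\deg z_{\emptyset}(v+\Lambda u)-\deg z_{\emptyset}(v)=(u,\Lambda u)+(\gamma,u)\equiv \sum_i d_i(u_i^2+u_i)\equiv 0 \pmod 2$. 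Hence, within a fixed $spin^c$ class, all generators with $|K|$ even occupy one Maslov parity and all with $|K|$ odd the other; a differential of cube degree $-2$ and Maslov degree $-1$ would connect classes of equal parity by an odd shift, so it vanishes. Without this (or an equivalent substitute) your argument is incomplete at the decisive step.

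Two further assertions would fail as written. The claim that the second isomorphism $H_{*}(\Cc,\partial)\simeq H_{*}(H_{*}(\Cc,\partial_{in}),\partial)$ is ``formal because $\partial$ carries no cube-degree $(-2)$ part'' is incorrect: for a differential $d_0+d_1$ with $d_i$ of filtration degree $-i$, the associated spectral sequence still has a potentially nonzero $d_2$, given on a class $[x]$ with $\partial_{ext}x=\partial_{in}y$ by the zig-zag $[\partial_{ext}y]$; this must also be killed (again by the parity argument), it does not vanish for free. The fallback rank argument is also unsound: the free part of $\HF^-$ of a rational homology sphere has $\F[U]$-rank $1$ in \emph{each} $spin^c$ class (so $|\det\Lambda|$ in total, not per class); equality of free ranks between $E_2$ and $E_\infty$ is itself something to prove, it only constrains $d_2$ on the free part, and you dispatch the torsion ``by the same grading estimate'' that was never established, which is circular. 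Finally, degeneration only yields $E_2\simeq \gr\HF^{-}$; to get the asserted isomorphism of $\F[U]$-modules you should say why the extension problems are trivial (the parity decomposition handles this too), or simply defer to Liu as the paper does.
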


The simplified surgery complex for a two-component L--space link is shown in Figure \ref{surg 2}.
Here $$v^*=c(L)-v=(2g_1+l-v_1,2g_2+l-v_2)$$ and, as above, $g_i$ is the genus of a component $L_i$ and $l$ is the linking number between the components.

\begin{figure}[ht!]
\begin{tikzpicture}
\draw (0,0) node {$z_{\emptyset}(v-\Lambda_1-\Lambda_2)$};
\draw (5,0) node {$z_{\{1\}}(v-\Lambda_2)$};
\draw (0,5) node {$z_{\{2\}}(v-\Lambda_1)$};
\draw (5,5) node {$z_{\{1,2\}}(v)$};
\draw (10,0) node {$z_{\emptyset}(v-\Lambda_2)$};
\draw (10.5,5) node {$z_{\{2\}}(v)$};
\draw (10.5,10) node {$z_{\emptyset}(v)$};
\draw (0,10) node {$z_{\emptyset}(v-\Lambda_1)$};
\draw (5,10) node {$z_{\{1\}}(v)$};

\draw [->,>=stealth] (3.8,0)--(1.5,0);
\draw [->,>=stealth] (6,0)--(9,0);
\draw [->,>=stealth] (4,5)--(1.2,5);
\draw [->,>=stealth] (6,5)--(9,5);
\draw [->,>=stealth] (4,10)--(1,10);
\draw [->,>=stealth] (6,10)--(9,10);
\draw [->,>=stealth] (0,4)--(0,1);
\draw [->,>=stealth] (0,6)--(0,9);
\draw [->,>=stealth] (5,4)--(5,1);
\draw [->,>=stealth] (5,6)--(5,9);
\draw [->,>=stealth] (10,4)--(10,1);
\draw [->,>=stealth] (10,6)--(10,9);

\draw (7.5,10.2) node {$\scriptstyle{h_1(v)}$};
\draw (10.5,7.5) node {$\scriptstyle{h_2(v)}$};
\draw (7.5,5.2) node {$\scriptstyle{h(v)-h_2(v)}$};
\draw (5.9,7.5) node {$\scriptstyle{h(v)-h_1(v)}$};
\draw (2.5,10.2) node {$\scriptstyle{h_1(v^*-\Lambda_2)}$};
\draw (0.7,7.5) node {$\scriptstyle{h_2(v-\Lambda_1)}$};
\draw (0.5,2.5) node {$\scriptstyle{h_2(v^*)}$};
\draw (2.5,5.2) node {$\scriptstyle{h(v^*)-h_2(v^*)}$};
\draw (2.5,0.2) node {$\scriptstyle{h_1(v^*)}$};
\draw (6.1,2.5) node {$\scriptstyle{h(v^*)-h_1(v^*)}$};
\draw (7.5,0.2) node {$\scriptstyle{h_1(v-\Lambda_2)}$};
\draw (10.9,2.5) node {$\scriptstyle{h_2(v^*-\Lambda_1)}$};
\end{tikzpicture}
\caption{Powers of $U$ in the surgery complex for a two-component L--space link}
\label{surg 2}
\end{figure}

In what follows we will need some information about the absolute homological gradings  on the surgery complex.
These can be  reconstructed from \eqref{main d} and the following result.

\begin{lemma}
For any fixed $v$ and arbitrary $u$  the absolute homological gradings of the generators $z_{\emptyset}$ are given by the formula
$$
\deg(z_{\emptyset}(v+\Lambda u))=(u,\Lambda u)+(\gamma,u)+const,
$$
where $\gamma=(2v_i-2g_i+d_i)_{i=1}^{r}$ and the constant depends only on the class of the $spin^c$ structure on $S^3_d(L)$ represented by $v$ (that is, only on the sublattice $(v+\Lambda u)_{u}$).
\end{lemma}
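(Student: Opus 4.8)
The plan is to read off the \emph{relative} gradings from the induced differential \eqref{main d} and then recognize the claimed closed form as the unique solution, up to an additive constant, of the resulting recursion. Since $\partial$ has homological degree $(-1)$ and every edge of the simplified complex is multiplication by a power of $U$, an operator of degree $(-2)$, any two terms occurring in a single expression $\partial(z_K(v))$ land in the same homological degree. I would apply this with $K=\{i\}$: by \eqref{main d} (using $h_{\emptyset}\equiv 0$) the two terms are $U^{h_i(v)}z_{\emptyset}(v)$ and $U^{h_i(v^*-\Lambda_{\overline{\{i\}}})}z_{\emptyset}(v-\Lambda_i)$, so equating their degrees gives
\[
\deg(z_{\emptyset}(v-\Lambda_i))=\deg(z_{\emptyset}(v))+2h_i\!\left(v^*-\Lambda_{\overline{\{i\}}}\right)-2h_i(v).
\]

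First I would simplify the right-hand side. By Lemma \ref{lem:dual proj} the $i$-th coordinate of $v^*-\Lambda_{\overline{\{i\}}}$ equals $(v_i)^*$, and since $h_i=h_{\{i\}}$ depends only on that coordinate, $h_i(v^*-\Lambda_{\overline{\{i\}}})=h_i((v_i)^*)$. Specializing the symmetry \eqref{symmetry} to the single knot $L_i$, for which $c(L_{\{i\}})=2g_i$ and hence $(v_i)^*=2g_i-v_i$, yields $h_i((v_i)^*)=h_i(v_i)-v_i+g_i$. Substituting, the $h$-terms cancel and the recursion collapses to
\[
\deg(z_{\emptyset}(v-\Lambda_i))=\deg(z_{\emptyset}(v))-2v_i+2g_i,
\]
or, writing $w=v-\Lambda_i$ so that $v_i=w_i+d_i$,
\[
\deg(z_{\emptyset}(w+\Lambda_i))-\deg(z_{\emptyset}(w))=2w_i-2g_i+2d_i.
\]

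With this per-coordinate step relation established, I would set $f(u):=\deg(z_{\emptyset}(v+\Lambda u))$ and check that the proposed formula $f(u)=(u,\Lambda u)+(\gamma,u)+\mathrm{const}$ solves it. Using symmetry of $\Lambda$ one has $(u+e_i,\Lambda(u+e_i))-(u,\Lambda u)=2(\Lambda u)_i+d_i$, while $(\gamma,e_i)=2v_i-2g_i+d_i$; adding these gives $f(u+e_i)-f(u)=2(v+\Lambda u)_i-2g_i+2d_i$, which is exactly the step relation evaluated at $w=v+\Lambda u$. Because $\Lambda$ is nondegenerate, $u\mapsto v+\Lambda u$ identifies $\BZ^r$ with the $spin^c$-coset of $v$, and the unit moves $u\mapsto u\pm e_i$ connect this lattice; hence any two functions obeying the step relation differ by a constant, so $f$ agrees with the quadratic formula up to its value at $u=0$. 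That value is the advertised constant, and it manifestly depends only on the coset $v+\Lambda\BZ^r$, i.e. on the $spin^c$ structure.

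The main obstacle I anticipate is bookkeeping rather than conceptual: correctly extracting the two $U$-powers on the edges leaving $z_{\{i\}}(v)$ from \eqref{main d}, and coordinating Lemma \ref{lem:dual proj} with the single-variable form of \eqref{symmetry} so that the $h$-contributions cancel exactly. Once the step relation is pinned down, verifying the quadratic and invoking nondegeneracy of $\Lambda$ is routine; exhibiting an explicit solution also sidesteps any separate consistency (curl-free) check on the increments.
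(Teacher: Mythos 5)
Your proposal is correct and follows essentially the same route as the paper's proof: both equate the degrees of the two terms of $\partial(z_{\{i\}}(v))$, use Lemma \ref{lem:dual proj} together with the symmetry \eqref{symmetry} to reduce the increment to $2(g_i-v_i)$, and then verify that the quadratic function $(u,\Lambda u)+(\gamma,u)$ satisfies the same first-order difference relations, so the two agree up to a constant depending only on the coset $v+\Lambda\BZ^r$. The only differences are cosmetic (you step by $+e_i$ where the paper steps by $-e_i$, and you spell out the connectivity-of-the-lattice argument that the paper leaves implicit).
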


\begin{proof}
Let us abbreviate $\deg(v):=\deg(z_{\emptyset}(v))$. Then, by \eqref{main d},
$$
\partial(z_{\{i\}}(v))=U^{h_i(v)}z_{\emptyset}(v)+U^{h_i\left(v^*-\Lambda_{\overline{\{i\}}}\right)}z_{\emptyset}(v-\Lambda_i),
$$
and two terms in the right hand side have the same homological degree.
By Lemma \ref{lem:dual proj} and \eqref{symmetry} one has:
$$
h_i\left(v^*-\Lambda_{\overline{\{i\}}}\right)=h_i((v_i)^{*})=h_i(2g_i-v_i)=h_i(v_i)+g_i-v_i.
$$
Since $\deg(U)=-2$, it follows that
$$
\deg(v-\Lambda_i)-\deg(v)=2(g_i-v_i)=2g_i-2(e_1,v).
$$
For $u\in \BZ^2$ define $Q(u):=\deg(v+\Lambda u)$.  Then
\begin{equation}\label{d Q}\begin{split}
Q(u-e_i)-Q(u)&=\deg(v+\Lambda u-\Lambda_i)-\deg(v+\Lambda u)\\
&=2g_i-2(e_i,v+\Lambda u)=
2(g_i-v_i)-2(e_i,\Lambda u).\end{split}
\end{equation}
The equations \eqref{d Q} determine the function $Q$ up to an overall constant (depending only on the lattice $(v+\Lambda u)_{u}$).
It remains to notice that the quadratic function
$Q'(u):=(u,\Lambda u)+(\gamma,u)$
satisfies the same identities:
\begin{equation*}\begin{split}
Q'(u-e_i)-Q'(u)&=-2(e_i,\Lambda u)+(e_i,\Lambda e_i)-(\gamma,e_i)\\
&= -2(e_i,\Lambda u)+d_i-\gamma_i=2(g_i-v_i)-2(e_i,\Lambda u),\end{split}
\end{equation*}
hence $Q(u)=Q'(u)+const$.
\end{proof}

\begin{corollary}
\label{degree squares}
The absolute homological gradings of the generators $z_{K}(v)$ are given by
\begin{equation*}\begin{split}
\deg(z_K(v+\Lambda u))&=\deg(z_{\emptyset}(v+\Lambda u))-2h_K(v+\Lambda u)+|K|\\
&=(u,\Lambda u)+(\gamma,u)-2h_K(v+\Lambda u)+|K|+const.\end{split}
\end{equation*}
\end{corollary}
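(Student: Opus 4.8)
The plan is to derive a recursion in $|K|$ relating $\deg(z_K(w))$ to $\deg(z_{K-i}(w))$ at a \emph{fixed} lattice point $w=v+\Lambda u$, solve it by induction on $|K|$, and then substitute the preceding Lemma for the $\emptyset$-term to obtain the closed quadratic expression.

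First I would isolate the ``short'' part of the induced differential from \eqref{main d}. For $i\in K$ it contributes the single term $U^{h_K(w)-h_{K-i}(w)}z_{K-i}(w)$, and by \eqref{eqn: j} the inclusion $j_{K-i,K}$ is nonzero on homology, so this term really appears with a nonzero coefficient. Since $\partial$ is homogeneous of homological degree $(-1)$ and $\deg(U)=-2$, comparing $\deg(z_K(w))-1$ with the degree of this output term gives
\begin{equation*}
\deg(z_K(w))-1=\deg(z_{K-i}(w))-2\bigl(h_K(w)-h_{K-i}(w)\bigr),
\end{equation*}
equivalently
\begin{equation*}
\deg(z_K(w))=\deg(z_{K-i}(w))-2h_K(w)+2h_{K-i}(w)+1.
\end{equation*}

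Next I would induct on $|K|$, keeping $w=v+\Lambda u$ fixed. The base case $K=\emptyset$ is immediate, since $h_\emptyset\equiv 0$ reduces the claimed identity to a tautology. For the inductive step, assuming $\deg(z_{K-i}(w))=\deg(z_\emptyset(w))-2h_{K-i}(w)+(|K|-1)$, the recursion above yields
\begin{equation*}
\deg(z_K(w))=\deg(z_\emptyset(w))-2h_{K-i}(w)+|K|-1-2h_K(w)+2h_{K-i}(w)+1=\deg(z_\emptyset(w))-2h_K(w)+|K|,
\end{equation*}
which is the first displayed equality of the Corollary. The second equality then follows at once by inserting $\deg(z_\emptyset(v+\Lambda u))=(u,\Lambda u)+(\gamma,u)+const$ from the preceding Lemma.

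The one point requiring care, and which I would treat as the main obstacle, is justifying that this degree-matching is legitimate: the argument only works because the short differential genuinely lands on $z_{K-i}(w)$ with nonzero coefficient, forcing its homological degree to equal $\deg(z_K(w))-1$. This nonvanishing is exactly the content of \eqref{eqn: j}; without it the degree of $z_{K-i}(w)$ would be unconstrained and the recursion would fail. It is also worth recording explicitly that $h_\emptyset\equiv 0$, since this pins down the base of the induction. Everything else is the routine bookkeeping displayed above.
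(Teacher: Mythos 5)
Your proposal is correct and follows essentially the same route as the paper: the same induction on $|K|$ using the degree of the short-differential term $U^{h_K(v)-h_{K-i}(v)}z_{K-i}(v)$ from \eqref{main d}, the same base case $K=\emptyset$, and the same substitution of the preceding Lemma for $\deg(z_\emptyset(v+\Lambda u))$. Your extra remark justifying the degree comparison via the nonvanishing in \eqref{eqn: j} is a worthwhile point of care that the paper leaves implicit.
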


\begin{proof}
We prove by induction on $|K|$ that
\begin{equation}
\label{eq: deg z}
\deg(z_K(v))=\deg(z_{\emptyset}(v))-2h_K(v)+|K|.
\end{equation}
For $K=\emptyset$ the equation is clear.
Assume that \eqref{eq: deg z} holds for $K-i$, then \eqref{main d} implies:
$$
\deg(z_K(v))-1=\deg(z_{K-i}(v))-2h_{K}(v)+2h_{K-i}(v),
$$
so \eqref{eq: deg z} holds for $K$ as well.
\end{proof}

\subsection{Very good points and bounded surgeries}

From now on we consider only links with two components.

\begin{definition}
\label{very good}
Let us call a lattice point $v=(v_1,v_2)\in \BZ^2$ {\em good} for an L--space link $L$, if $h(v)>h_1(v_1)$ and $h(v)>h_2(v_2)$,
and {\em very good} for $L$, if both $v$ and $v^*$ are good for $L$.
\end{definition}

The following theorem is one of the main results of the article.

\begin{theorem}[Theorem \ref{th:bounded}]\label{th:bounded2}
Suppose that there exists a very good point for an L--space link $L$. Then for all L--space
surgeries on $L$ the framing matrix $\Lambda$ is positive definite.
\end{theorem}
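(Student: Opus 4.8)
The plan is to prove the statement in contrapositive form at the level of the surgery complex: assuming a very good point $v$ exists, I take an arbitrary L--space surgery $S^3_{d_1,d_2}(L)$ and show that its framing matrix $\Lambda=\Lambda(d)$ must be positive definite. Since an L--space is a rational homology sphere, $\det\Lambda\neq 0$, so if $\Lambda$ were not positive definite it would be either negative definite or indefinite. By the degeneration theorem of Liu quoted above, $\HFminus(S^3_{d_1,d_2}(L))\cong H_*(\Cc,\partial)$, computed by the induced differential \eqref{main d} on the $\partial_{in}$--homology, and by the definition of an L--space each summand $\HFminus(M,s)$ is isomorphic to $\F[U]$; in particular $U$ acts injectively, so $H_*(\Cc,\partial)$ has no $U$--torsion. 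The whole argument therefore reduces to exhibiting a nonzero class annihilated by $U$ whenever $\Lambda$ fails to be positive definite.

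The very good point supplies the candidate torsion class. Writing out \eqref{main d} for the top generator $z_{\{1,2\}}(v)$ gives
\[
\partial z_{\{1,2\}}(v)=U^{h(v)-h_2(v)}z_{\{2\}}(v)+U^{h(v^*)-h_2(v^*)}z_{\{2\}}(v-\Lambda_1)+U^{h(v)-h_1(v)}z_{\{1\}}(v)+U^{h(v^*)-h_1(v^*)}z_{\{1\}}(v-\Lambda_2).
\]
By Definition \ref{very good} the exponents $h(v)-h_1(v)$ and $h(v)-h_2(v)$ are positive because $v$ is good, and the exponents $h(v^*)-h_1(v^*)$ and $h(v^*)-h_2(v^*)$ are positive because $v^*$ is good; this is exactly where both halves of the ``very good'' hypothesis are used. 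Hence $\partial z_{\{1,2\}}(v)\in U\,\Cc$, and since the chain groups are free over $\F[U]$ the element $w:=U^{-1}\partial z_{\{1,2\}}(v)$ is an honest chain supported on the level--one generators $z_{\{1\}},z_{\{2\}}$. From $U\,\partial w=\partial^2 z_{\{1,2\}}(v)=0$ and $U$--freeness one gets $\partial w=0$, while $U w=\partial z_{\{1,2\}}(v)$ is a boundary, so $U[w]=0$. It remains to decide when $[w]\neq 0$.

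The key reformulation is that $[w]=0$ if and only if $w=\partial\eta$ for some finite $\F[U]$--combination $\eta$ of level--two generators; over $\F=\BZ/2\BZ$ this is equivalent to $z_{\{1,2\}}(v)+U\eta$ being a genuine cycle, i.e.\ to the mod--$U$ cycle $z_{\{1,2\}}(v)$ lifting to an honest cycle in the level--two part of $\Cc$. For the theorem I only need the implication ``$\Lambda$ not positive definite $\Rightarrow [w]\neq 0$'', and this is the main obstacle. The strategy is to obstruct the lift by the homological grading: by Corollary \ref{degree squares} the degree of $z_{\{1,2\}}(v+\Lambda u)$ equals $(u,\Lambda u)+(\gamma,u)-2h(v+\Lambda u)+2+\mathrm{const}$, and any $\eta$ bounding $w$ must be homogeneous of degree $\deg w+1$. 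When $\Lambda$ is negative definite or indefinite the quadratic form $(u,\Lambda u)$ is unbounded below, so I would show that cancelling the terms of $w$ by short and long differentials — a process propagating through the lattice $v+\Lambda\BZ^2$ along $\Lambda_1,\Lambda_2$ and governed by the symmetry \eqref{symmetry} relating the contributions at $v$ and $v^*$ — can never close up inside a finite combination, so no such $\eta$ exists and $[w]\neq 0$. The delicate part, which I expect to treat by separate analyses of the indefinite and the negative definite regimes, is to exclude \emph{every} finite bounding chain rather than the naive one: a hypothetical $\eta$ would force an infinite sequence of occupied lattice points along which the grading decreases without bound, contradicting finiteness. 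The resulting nonzero $U$--torsion class contradicts the L--space hypothesis, so $\Lambda$ must be positive definite; unwinding the definitions gives $d_1,d_2>0$ and $d_1d_2>l^2$, which is the asserted inclusion for $\ls(L)$.
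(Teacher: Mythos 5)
Your construction of the torsion candidate coincides with the paper's: both divide $\partial z_{\{1,2\}}(v)$ by $U$ using all four inequalities supplied by the very good hypothesis, observe that the resulting cycle $Z(v)$ satisfies $UZ(v)\in\Imm\partial$, reduce the theorem to showing that $Z(v)$ cannot bound when $\Lambda$ is not positive definite, and point at Corollary \ref{degree squares} as the obstruction. Up to that point you are correct. The gap is in the step you explicitly defer (``I would show\ldots'', ``the delicate part, which I expect to treat\ldots''), which is where the entire content of the proof lies, and the way you propose to close it does not work. You want to argue that a bounding chain $\eta$ would be forced to occupy infinitely many lattice sites $v+\Lambda u$, ``contradicting finiteness.'' That premise proves too much: the propagation of support (the generator $z_{\{1\}}(v+\Lambda u)$ occurs in $\partial z_{\{1,2\}}(w)$ only for $w=v+\Lambda u$ and $w=v+\Lambda(u+e_2)$, with coefficients that are nonzero powers of $U$, and similarly for $z_{\{2\}}$ in the $e_1$--direction) is completely independent of the signature of $\Lambda$. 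In particular, when $\Lambda$ \emph{is} positive definite and the surgery is an L--space, the class $[Z(v)]$ does vanish and its bounding chain exists and is necessarily infinitely supported. So if finiteness of chains were available as a contradiction, your argument would show that no surgery on $L$ is ever an L--space; the surgery complex must (and does) admit such infinite, $U$--adically convergent chains.

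The correct finish, which is what the paper actually does, is a degree count rather than a finiteness count. Homogeneity forces each coefficient of $\eta$ to be a single monomial, so the propagation yields $\eta=\sum_{u\neq 0}U^{N(u)}z_{\{1,2\}}(v+\Lambda u)$ with $N(u)\ge 0$ for \emph{every} $u\in\BZ^2\setminus\{0\}$, and then Corollary \ref{degree squares} gives
\[
\deg(\eta)=(u,\Lambda u)+(\gamma,u)-2h(v+\Lambda u)+2+\mathrm{const}-2N(u)\le (u,\Lambda u)+(\gamma,u)+2+\mathrm{const}
\]
for all $u\neq 0$. The obstruction is thus that $N(u)$ would have to be \emph{negative} wherever $Q(u)=(u,\Lambda u)+(\gamma,u)$ is very negative; hence $Q$ is bounded below on $\BZ^2\setminus\{0\}$, and letting $v$ (hence $\gamma$) vary and using $\det\Lambda\neq 0$ forces $\Lambda$ to be positive definite. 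No case split between the indefinite and negative definite regimes is needed, and the symmetry \eqref{symmetry} plays no role in this step. You have all the ingredients on the table, but you must actually run the support--propagation argument and replace ``contradicting finiteness'' by ``contradicting $N(u)\ge 0$.''
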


\begin{proof}
Suppose that $v$ is a very good point for $L$.
Consider the surgery complex for a $(d_1,d_2)$--surgery of $S^3$ along $L$ with $spin^c$ structure,
corresponding to $v$. Since $v$ is very good, all four numbers
$h(v)-h_1(v),h(v)-h_2(v),h(v^*)-h_1(v^*),h(v^*)-h_2(v^*)$ are strictly positive, hence
 the boundary $\partial(z_{\{1,2\}}(v))$ is divisible by $U$.
Consider the cycle
$$
Z(v):=U^{-1}\partial(z_{\{1,2\}}(v)).
$$
One has $\partial Z(v)=0$ and $UZ(v)\in \Imm\partial$.
Since $S^3_d(L)$  is an L--space, its homology $H_{*}(\Cc,\partial)$ is  isomorphic to $\BZ[U]$,
hence it has no nontrivial element annihilated by $U$. Hence,
one should have $Z(v)=\partial \alpha$.
 Such an $\alpha$  must have the form
\begin{equation}
\label{eq: alpha}
\alpha=\sum_{u\in \BZ^2\setminus (0,0)}  U^{N(u)}z_{\{1,2\}}(v+\Lambda u)
\end{equation}
for some $N(u)\ge 0$ (otherwise $\partial \alpha$ would contain more terms).

Let us compare the homological degrees in \eqref{eq: alpha}. By Corollary \ref{degree squares} we have
\begin{equation*}\begin{split}
\deg(\alpha)& =\deg(z_{\{1,2\}}(v+\Lambda u))-2N(u)\le \deg(z_{\{1,2\}}(v+\Lambda u))\\
& =(u,\Lambda u)+(\gamma,u)-2h(v+\Lambda u)+2+const\le (u,\Lambda u)+(\gamma,u)+2+const.
\end{split}\end{equation*}
We conclude that the quadratic form $Q(u)=(u,\Lambda u)+(\gamma,u)$ is
bounded from below on $\BZ^2\setminus \{(0,0)\}$. Since this happens for any $v$ (hence any $\gamma$),
$\Lambda$ must be positive definite.
\end{proof}

\begin{remark}
One can apply a similar argument for knots, where the very good points can be defined by inequalities
$$
h(v)>0, \   h(v^*)=h(2g-v)>0.
$$
However, for any L--space knot $h(v)>0$ if and only if $v>0$, and $h(v^*)>0$ if and only if $v<2g$.
Therefore for any nontrivial L--space knot all points $v\in [1,2g-1]$ are very good.
Similarly to Theorem \ref{th:bounded}, one proves that all L--space surgeries on a
{\em nontrivial} L--space knot are positive (cf. Theorem \ref{th: ls knots}).
\end{remark}

\begin{remark}
At present, we cannot generalize Theorem \ref{th:bounded} to the case of links with 3 or more components, since the cycle
$Z(v)$ may be annihilated by a higher differential.
\end{remark}

\section{Negative definite graph manifolds and their surgeries}
\label{ss:graph mfd}

\subsection{}
Consider a rational homology sphere
graph manifold $Y$ corresponding to a negative definite plumbing graph $\Gamma$.
Each vertex $v$ defines a knot $K_v$ in $Y$.
The pair  $(\Gamma,v)$ and an integer $d'$ determine another plumbing graph  $\Gamma_{d'}$ constructed as follows.
We add to the graph $\Gamma$ (whose shape and decorations we keep) another new vertex,
 say $v_{new}$, with decoration $d'$ (and genus zero),
which is connected to vertex $v$ by an edge.
Then $\det(\Gamma_{d'})=-d'\det(\Gamma)-\det(\Gamma\setminus v)$, denoted by $-d$
(recall that $\det(\Gamma)=\det(-I_\Gamma)$). Hence
$\Gamma_{d'}$ is a negative definite graph whenever $d<0$.
This graph represents the
surgery 3--manifold $Y_{d}(K_v)$. If $d\not=0$ then $Y_{d}(K_v)$ is a rational homology sphere with
$|H_1(Y_{d}(K_v))|=|d|$. (This is compatible with the construction from subsection \ref{ss:gm},
where $\det(\Gamma)=1$ and $\det(\Gamma\setminus v_i)=m_i$.)

If $\Gamma $ is any connected negative definite graph with vertices $\Vv$ and plumbed 4--manifold
$P(\Gamma)$, then its lattice $L$ is $H_2(P(\Gamma),\BZ)$ with intersection form $(\cdot,\cdot)$.
If $\{E_v\}_{v\in\Vv}$ denote the cores in $P(\Gamma)$, then $L=\BZ\langle E_v\rangle_v$, and the
intersection form $I$ associated with $\Gamma$ is exactly $(E_v,E_w)_{v,w}$.

The Lipman cone in $L$ is defined (see e.g. \cite{Lipman,NGr}) by
 \begin{equation}\label{eq:conegamma}
C(\Gamma)=\left\{Z \in L\ :\   (Z,E_v)\le 0\ \ \mbox{for all $v\in\Vv$}\right\}.
\end{equation}
The {\it minimal} (or fundamental) cycle $Z_{\min}=Z_{\min}(\Gamma)$ of $\Gamma$
  is the unique non--zero minimal element in $C(\Gamma)$, cf. \cite{Artin62,Artin66}.
%
It is known (e.g. \cite{Artin66}) that if $Z=\sum_vn_vE_v\in C(\Gamma)$
then $n_v\ge 0$ for all $v$, and if additionally $Z\not=0$ then $n_v> 0$ for all $v$.
In particular $\sum_vE_v\leq Z_{\min}\leq Z$ for any $Z\in C(\Gamma)$.

The minimal cycle can be used to define {\em rational graphs} via
{\it Laufer's Rationality Criterion}  \cite{Laufer72}.
First we recall Laufer's algorithm, whose output is the minimal cycle. This provides a
{\it computation sequence}
$\{z_i\}_{i=0}^t\in L$, such that $z_0$ is one of the
arbitrarily chosen base elements $E_v$, $z_t=Z_{\min}$, and $\{z_i\}_{i=0}^t$ is constructed
inductively as follows \cite{Laufer72}.
Assume that $z_i$ was already constructed. If $(z_i,E_v)\leq 0$ for all $v$ then we stop:
 $i=t$, and $z_i=Z_{\min}$. If $(z_i,E_v)> 0$ for a certain $v$, then choose one of such vertices, say
 $v(i)$, and  set $z_{i+1}=z_i+E_{v(i)}$,
 and restart the algorithm again. The procedure necessarily stops after finitely many
  steps, and the final $z_t$ is always
 $Z_{\min}$ (though the sequence is not necessarily unique).

 Then, {\em Laufer's Rationality Criterion} says  that
 $\Gamma$ is rational if and only if along an arbitrarily chosen computation sequence
 (hence along all the computation sequences)
 at every step $i<t$ one has $(z_i,E_{v(i)})= 1$, see \cite{Laufer72}.
(We will call the integers $(z_i,E_{v(i)})$ `testing numbers'.)

It is not hard to verify using this criterion that rational graphs are stable by taking subgraphs or by
decreasing the decorations of a graph. In both these two cases one can construct a computation sequence
for a subgraph, or for a modified graph with
decreased  decorations, which is the (starting) part  of a computation sequence
of the original graph.


\begin{definition}
\label{def simple} Fix a connected negative definite graph $\Gamma$.
A vertex $v$ of $\Gamma$ is called {\it simple} if the coefficient of $E_v$ in $Z_{\min}(\Gamma)$ equals 1.
\end{definition}
Since $\sum_vE_v\leq Z_{\min}\leq Z$ for any $Z\in C(\Gamma)$, $v$ is simple if and only if
there exists $Z\in C(\Gamma)$, whose $E_v$--coefficient is 1.

The following theorem describes when the set of L--space surgeries of $Y$ along $K_v$ is bounded.
(Recall, see Theorem \ref{th:ratL}, that
a negative definite graph $\Gamma$ defines an L-space  if and only if $\Gamma$ is rational.)
\begin{theorem}
\label{th: l space simple vertex}
Assume that  $\Gamma$ is  a negative definite
 rational graph (so $Y$ is an L--space). Then the following statements hold:

a) For $d\gg 0$, $Y_{d}(K_v)$ is an L--space.

b) For $d\ll 0$, $Y_{d}(K_v)$ is an L--space if and only if $v$ is a simple vertex of\, $\Gamma$.
\end{theorem}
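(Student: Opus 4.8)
The plan is to reinterpret "$Y_d(K_v)$ is an L--space'' via Theorem \ref{th:ratL} as a statement about rationality of the plumbing graph $\Gamma_{d'}$, and then to analyze when $\Gamma_{d'}$ is rational by running Laufer's algorithm. Recall that $\Gamma_{d'}$ is obtained from $\Gamma$ by attaching a new vertex $v_{new}$ with decoration $d'$ to $v$, and that the relation $d = d'\det(\Gamma)+\det(\Gamma\setminus v)$ (up to sign conventions in the excerpt) shows $d\to +\infty$ corresponds to $d'\to+\infty$, while $d\to -\infty$ corresponds to $d'\to-\infty$; in both extreme regimes $\Gamma_{d'}$ is negative definite, so Theorem \ref{th:ratL} applies and the question is purely combinatorial.

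For part (a), the key observation is that making $d'$ very negative (equivalently $d\gg 0$) makes the decoration at $v_{new}$ very negative. I would run Laufer's algorithm on $\Gamma_{d'}$ starting the computation sequence inside the old graph $\Gamma$: since $\Gamma$ is rational, every testing number $(z_i,E_{v(i)})$ arising from the subgraph $\Gamma$ equals $1$, so the minimal cycle of $\Gamma$ is reached with all testing numbers equal to $1$. One then checks that when $d'\ll 0$ the coefficient of $E_{v_{new}}$ in $Z_{\min}(\Gamma_{d'})$ is forced to be $0$ or $1$ and the single additional step involving $v_{new}$ (if any) also has testing number $1$; hence $\Gamma_{d'}$ stays rational and $Y_d(K_v)$ is an L--space. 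This is essentially the stability of rationality under decreasing decorations, which the excerpt already grants via Laufer \cite{Laufer72} and Corollary \ref{cor:rat}.

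For part (b), the substantive direction is the equivalence with simplicity of $v$, and I expect this to be the main obstacle. Here $d\ll 0$ forces $d'$ to be large, so $v_{new}$ has a large (eventually nonnegative, then the graph fails to be negative definite) decoration; the correct regime is to keep $d'$ just negative enough that $\Gamma_{d'}$ remains negative definite while $d\ll 0$, and to study $Z_{\min}(\Gamma_{d'})$ as $d'$ grows. I would run Laufer's algorithm and track the testing number at the step where the edge between $v_{new}$ and $v$ is activated: the coefficient of $E_v$ in $Z_{\min}(\Gamma_{d'})$ controls the value $(z_i,E_{v_{new}})$, and a testing number exceeding $1$ appears precisely when the $E_v$--coefficient in $Z_{\min}(\Gamma)$ is at least $2$, i.e. when $v$ is \emph{not} simple. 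Conversely, if $v$ is simple, one arranges a computation sequence that absorbs $v_{new}$ with testing number $1$, preserving rationality. The delicate point, and the place I would spend the most care, is controlling how $Z_{\min}(\Gamma_{d'})$ restricts to $\Gamma$ and showing that the $E_v$--coefficient of the restriction stabilizes to the $E_v$--coefficient of $Z_{\min}(\Gamma)$ as $d\to-\infty$; this uses the characterization of the Lipman cone together with the inequality $\sum_v E_v \le Z_{\min}\le Z$ for every $Z\in C(\Gamma)$ and the reformulation of simplicity recorded after Definition \ref{def simple} (namely that $v$ is simple iff some $Z\in C(\Gamma)$ has $E_v$--coefficient $1$).
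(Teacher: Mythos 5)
Your proposal founders on the sign conventions, and this is not a cosmetic slip: it inverts which half of the theorem is combinatorial. From $\det(\Gamma_{d'})=-d$ one sees that $\Gamma_{d'}$ is negative definite exactly when $d<0$; since $d=d'\det(\Gamma)+\det(\Gamma\setminus v)$ with $\det(\Gamma)>0$, the regime $d\gg 0$ corresponds to $d'\gg 0$ and to a graph $\Gamma_{d'}$ that is \emph{not} negative definite, so Theorem \ref{th:ratL} and Laufer's criterion do not apply there at all. Your plan paragraph states the correct correspondence but then asserts that ``in both extreme regimes $\Gamma_{d'}$ is negative definite,'' and your argument for part (a) runs Laufer's algorithm on $\Gamma_{d'}$ with $d'\ll 0$ --- which is the $d\ll 0$ regime of part (b), not part (a). Worse, the conclusion you draw there (that attaching a vertex with very negative decoration always preserves rationality, ``essentially the stability of rationality under decreasing decorations'') is false and directly contradicts part (b): the testing number at the step where $v_{new}$ is first activated is $(Z_{\min}(\Gamma),E_{new})=n$, the $E_v$--coefficient of $Z_{\min}(\Gamma)$, and when $n\ge 2$ the graph $\Gamma_{d'}$ is negative definite but not rational no matter how negative $d'$ is. Corollary \ref{cor:rat} covers subgraphs and decreased decorations of existing vertices, not the addition of a new vertex. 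Part (a) genuinely needs a different argument (the paper invokes the large--surgery argument of \cite{GN1}).

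For part (b) your idea for the ``rational $\Rightarrow$ simple'' direction is the right one (run a computation sequence through $Z_{\min}(\Gamma)$ and read off the testing number $n$ at $E_{new}$), but the converse is where the real work lies and your proposal leaves it at ``one arranges a computation sequence that absorbs $v_{new}$ with testing number $1$.'' The difficulty is that after reaching $z_s=E_{new}+Z_{\min}(\Gamma)$ the algorithm need not terminate: if $v$ is a Tjurina vertex of $\Gamma$, i.e. $(Z_{\min}(\Gamma),E_v)=0$, then $(z_s,E_v)=1>0$ and one must keep adding cycles. The paper controls this by introducing the nested chain of Tjurina components $\Gamma\supsetneq\Delta_1\supsetneq\cdots\supsetneq\Delta_k=\emptyset$ containing $v$ and showing that for $d'\le -k$ the sequence terminates at $E_{new}+Z_{\min}(\Gamma)+Z_{\min}(\Delta_1)+\cdots+Z_{\min}(\Delta_{k-1})$ with all testing numbers equal to $1$; simplicity of $v$ is used at each stage to guarantee that $E_v$ need not be re-added prematurely and that $(z,E_{new})$ stays $\le 0$. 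Your alternative suggestion --- studying how $Z_{\min}(\Gamma_{d'})$ restricts to $\Gamma$ and ``stabilizes'' --- is not developed and, as stated, would not by itself verify Laufer's criterion, which requires checking every testing number along a computation sequence rather than only identifying the limit cycle. So both halves of the proposal have genuine gaps: part (a) uses an inapplicable (and false) argument in the wrong regime, and the ``if'' direction of part (b) is missing its essential construction.
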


\begin{proof}
Note that  $d\gg 0$ (resp. $d\ll 0$) if and only if $d'\gg 0$ (resp. $d'\ll 0$).
The proof of (a) is identical to the proof of the main theorem of \cite{GN1}. Next we prove (b).
Since for $d'\ll 0$ the graph $\Gamma_{d'}$ is negative definite, the statement transforms into the rationality of
 $\Gamma_{d'}$.

 Let $n$ denote the coefficient of $E_v$ in $Z_{\min}$.

Suppose that $\Gamma_{d'}$ is rational. Let us  run Laufer's algorithm for $Z_{\min}(\Gamma_{d'})$
in such a way that $z_0$ is a base element of $L(\Gamma)$ and at all steps
we choose $E_{v(i)}$ from the support of $\Gamma$ whenever is possible.
Then at an intermediate steps we have $x_i=Z_{\min}(\Gamma)$. The next choice is necessarily
$E_{v(i)}=E_{new}$, and the Laufer's testing number is $(x_i, E_{new})=(Z_{\min}(\Gamma),E_{new})=n$.
Hence $n=1$ by Laufer's Criterion. See also \cite[Corollary 4.1]{LeTosun}.

In fact, we proved the following {\it general fact}: if $\Delta$ is a subgraph of a rational graph
$\Delta'$, and $(v,v')$ is an edge in $\Delta'$ such that $v\in \Delta$ but $v'\not\in\Delta$, then the
$E_v$--coefficient of $Z_{\min}(\Delta)$ is 1.

Conversely, assume that $n=1$, and we prove that $\Gamma_{d'}$ is rational for $d'\ll 0$. This essentially follows from \cite[Theorem 4.8]{LeTosun}, but we present here a slightly shorter proof (adopted to this situation)
for the reader's convenience. Following \cite{Sp,Ty} we introduce some notations.

For any graph $G$, we say that $u\in \Vv(G)$ is a {\em Tjurina vertex} of $G$
if $(Z_{\min}(G),E_u)=0$.

Let $\Delta_1$ be the connected component of the set of Tjurina vertices of $\Gamma$ (as full subgraph),
which contains $v$ (if $v$ is not a Tjurina vertex, $\Delta_1=\emptyset$). $\Delta_1\subsetneq\Gamma$ since $(Z_{\min}(\Gamma),E_u)$ cannot be zero for all $u\in\Vv(\Gamma)$.
Let $\Delta_2$ be the connected component of the set of Tjurina vertices of $\Delta_1$, which contains $v$ etc. By repeating this procedure, we obtain a sequence of properly nested subgraphs:
$$
\Gamma \supsetneq \Delta_1 \supsetneq \Delta_2 \supsetneq\ldots \supsetneq \Delta_k=\emptyset.
$$
We claim that if $d'\leq -k$ and $\Gamma_{d'}$ is negative definite then $\Gamma_{d'}$ is rational. Indeed, let us run the Laufer's algorithm for
$\Gamma_{d'}$. We start with $z_0=E_{new}$, hence $z_1=E_{new}+E_v$.
 Then the  next few steps are identical with the steps of the algorithm for $\Gamma$, hence at some point
  we obtain the  cycle $z_s=E_{new}+Z_{\min}(\Gamma)$.
  (The assumption is used here: since the $E_v$--multiplicity in $Z_{\min}(\Gamma)$ is 1, during the steps between
  $E_{new}+E_v$ and $E_{new}+Z_{\min}(\Gamma)$ we do not need to add $E_v$, hence we never test for
  $(x_i,E_v)$, which is changed by the presence of $E_{new}$.)
  Note that $(z_s,E_u)=(Z_{\min},E_u)\le 0$ for $u\in\Vv\setminus v$ and $(z_s,E_{new})=1+d'\leq 0$.  If $v$ is not a Tjurina vertex for $\Gamma$,
  we have $(Z_{\min}(\Gamma),E_v)\le -1$, hence $(z_s,E_v)\le 0$,
and the algorithm stops, $Z_{\min}(\Gamma_{d'})=E_{new}+Z_{\min}(\Gamma)$ with testing numbers  1 along all the steps.
If $v$ is a Tjurina vertex for $\Gamma$, we need to continue with
$z_{s+1}=z_s+E_v$ (whose testing number is 1 again).
Then along the next few  steps we choose   $v(i)$  imposed by the
algorithm of $Z_{\min}(\Delta_1)$. Hence, we will arrive at the cycle
$
z_{s'}=E_{new}+Z_{\min}(\Gamma)+Z_{\min}(\Delta_1)$.
This cycle satisfies $(z_{s'},E_u)\le 0$ for $u\in \Vv\setminus v$ (even for $v\in \Vv(\Gamma\setminus \Delta_1)$ thanks to the above {\it general fact} regarding subgraphs of rational graphs, applied for the pair
$\Delta_1\subset \Gamma$).
Furthermore, $(z_{s'},E_{new})=2+d'\leq 0$ too (since $Z_{\min}(\Delta_1)\leq Z_{\min}(\Gamma)$, hence both have
$E_v$--coefficient 1). Thus the  only vertex that eventually
needs correction is $v$. Note that again
$
(z_{s'},E_v)=(E_{new},E_v)+(Z_{\min}(\Gamma)+Z_{\min}(\Delta_1),E_v)\le 1.
$
We repeat this procedure until we get the cycle
$$
z_{t}=E_{new}+Z_{\min}(\Gamma)+Z_{\min}(\Delta_1)+\ldots+Z_{\min}(\Delta_{k-1}).
$$
Then, $(x_t,E_u)\leq $ for all vertices of $\Gamma_{d'}$, hence
 $z_t=Z_{\min}(\Gamma_{d'})$.
 Since along all the steps the testing numbers $(x_i,E_{v(i)})=1$,  $\Gamma_{d'}$ is rational.
\end{proof}



\begin{example}
Consider the plumbing graph for the lens space $L(10,9)$ (or $A_9$ singularity) shown in Figure \ref{fig:gap}
(nine $(-2)$--vertices).
Its minimal  cycle has coefficient 1 at each vertex.
One can check that a $d$-surgery on its central vertex $v$ is an $L$-space if and only if $d'\in (-\infty,-4]\cup [-1,+\infty)$.
 The rectangles represent the subgraphs $\Delta_i$ appearing in the proof of Theorem \ref{th: l space simple vertex}.
(Note that $\Gamma_{-3}$ is negative definite but not rational.)
\end{example}

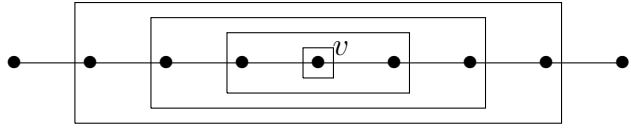
\begin{figure}
\begin{tikzpicture}
\draw (0,0)--(8,0);
\draw (0,0) node {$\bullet$};
\draw (1,0) node {$\bullet$};
\draw (2,0) node {$\bullet$};
\draw (3,0) node {$\bullet$};
\draw (4,0) node {$\bullet$};
\draw (5,0) node {$\bullet$};
\draw (6,0) node {$\bullet$};
\draw (7,0) node {$\bullet$};
\draw (8,0) node {$\bullet$};
\draw (3.8,-0.2)--(3.8,0.2)--(4.2,0.2)--(4.2,-0.2)--(3.8,-0.2);
\draw (2.8,-0.4)--(2.8,0.4)--(5.2,0.4)--(5.2,-0.4)--(2.8,-0.4);
\draw (1.8,-0.6)--(1.8,0.6)--(6.2,0.6)--(6.2,-0.6)--(1.8,-0.6);
\draw (0.8,-0.8)--(0.8,0.8)--(7.2,0.8)--(7.2,-0.8)--(0.8,-0.8);
\draw (4.3,0.2) node {$v$};
\end{tikzpicture}
\caption{Resolution of $A_9$ singularity and the subgraphs $\Delta_i$}
\label{fig:gap}
\end{figure}

\begin{remark}\label{rem:analsimple}
{\it (Analytic interpretation of simple vertices.)} \ Assume that $(X,o)$ is a rational
complex normal surface singularity (that is,  its geometric genus is zero, or equivalently, any of its
good resolution graphs is rational). Let $(C,o)\subset (X,o)$ be an irreducible curve in it. Assume that
$\Gamma$ is the resolution graph of a good embedded resolution $\widetilde{X}\to X$ (that is,
the total transform of $C$ is a normal crossing divisor). Let $E_v$ be the irreducible exceptional curve,
which intersects the strict transform of $(C,o)$. Then the vertex $v$ is simple if and only if
$(C,o)$ is smooth.
Indeed, for rational singularities the pull--back of the maximal ideal of ${\mathcal O}_{X,o}$
is ${\mathcal O}_{\widetilde{X}}(-Z_{\min})$ and it has no basepoint \cite{Artin62,Artin66}.
Hence, the multiplicity of $(C,o)$ (that is, the intersection of $(C,o)$ with a generic linear form)
is the $E_v$--multiplicity of $Z_{\min}(\Gamma)$. But ${\rm mult}_o(C,o)=1$ if and only if $(C,o)$ is smooth.
\end{remark}

\section{Invariants of algebraic links}\label{s:alglinks}

\subsection{Semigroup, Alexander polynomial and the $h$--function}\label{ss:algfacts}
Let $C=C_1\cup C_2\subset (\BC^2,0)$ be a  plane curve singularity with 2 components.
Let $L=L_1\cup L_2\subset S^3$ be the corresponding link. Let $\gamma_i:(\BC,0)\to (C_i,0)$ be the
normalization of $C_i$.

\begin{definition}For any function $f\in \BC\{x,y\}$ set
$\nu_i(f)=\Ord_t(f(\gamma_i(t)))$.
The semigroup $S_C$ of the germ $C$ is the set of  pairs $(\nu_1(f),\nu_2(f))\in (\BZ_{\geq 0})^2$ for all $f\in \BC\{x,y\}$.
\end{definition}

One defines similarly the semigroup of a  one-component curve. If $C_1$ is a component of
 $C=C_1\cup C_2$ then $S_{C_1}$ is  the image of the first projection of $S_C$.

In the next proposition $K$ is an algebraic {\it knot},  $S$ is the semigroup of the corresponding curve--germ,
$\Delta(t)$ is  the Alexander {\it polynomial} of $K$. It is well--known that the degree $\mu$ of $\Delta(t)$
is twice the genus of $K$.
\begin{proposition}\label{alex 1 component} \cite{CDG}
With these notations  the following statements hold.
For $s\ge \mu$ one has $s\in S$ (in fact, $\mu$ is optimal with this property, that is,
$\mu$ is the conductor of $S$),  and $\sum_{s\in S}t^s=\Delta(t)/(1-t)$.
\end{proposition}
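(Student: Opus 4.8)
The plan is to prove the generating-function identity $\sum_{s\in S}t^s=\Delta(t)/(1-t)$ first, and then to read off the conductor statement from it. Since both the semigroup and the Alexander polynomial of an irreducible plane branch are governed by the iterated torus (cabling) structure of $K$, I would argue by induction on the number $g$ of Puiseux pairs.

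For the base case $g=1$ the germ is $x^p-y^q$ with $\gcd(p,q)=1$, so $K=T(p,q)$ and $S=\langle p,q\rangle$. Counting the unique representations $s=ap+bq$ with $0\le a<q$ gives
$$
\sum_{s\in S}t^s=\frac{1-t^{pq}}{(1-t^p)(1-t^q)},
$$
and since $\Delta_{T(p,q)}(t)=\frac{(t^{pq}-1)(t-1)}{(t^p-1)(t^q-1)}$, division by $1-t$ produces the same rational function. Here $\Delta_{T(p,q)}$ is monic of degree $(p-1)(q-1)=\mu$ with $\Delta_{T(p,q)}(0)=\Delta_{T(p,q)}(1)=1$, which fixes the normalization used throughout.

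For the inductive step, let $\beta_0<\dots<\beta_g$ be the minimal generators of $S$, put $e_i=\gcd(\beta_0,\dots,\beta_i)$ and $n_i=e_{i-1}/e_i$, and set $m:=n_g=e_{g-1}$. I would invoke two standard inputs from the theory of plane branches: that the branch with $g$ pairs is the $(m,\beta_g)$--cable of the branch $K'$ obtained by deleting the last Puiseux pair, whose semigroup $S'$ has minimal generators $\beta_i/m$ for $i<g$; and the cable formula $\Delta_K(t)=\Delta_{K'}(t^m)\,\Delta_{T(m,\beta_g)}(t)$. In particular $\langle\beta_0,\dots,\beta_{g-1}\rangle=mS'$, one has $\gcd(m,\beta_g)=e_g=1$, and the defining relation is $n_g\beta_g=ms_0'$ for some $s_0'\in S'$. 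The heart of the step is the factorization
$$
\sum_{s\in S}t^s=\Big(\sum_{s'\in S'}t^{ms'}\Big)\cdot\frac{1-t^{n_g\beta_g}}{1-t^{\beta_g}}.
$$
To prove it I would show that every $s\in S$ has a unique expression $s=ms'+k\beta_g$ with $s'\in S'$ and $0\le k<n_g$: uniqueness is immediate since $\gcd(m,\beta_g)=1$ and $0\le k<n_g=m$, while existence follows by reducing the exponent of $\beta_g$ modulo $n_g$ and absorbing the excess via $n_g\beta_g=ms_0'\in mS'$. Feeding in the inductive hypothesis $\Delta_{K'}(t^m)/(1-t^m)=\sum_{s'\in S'}t^{ms'}$ together with the base-case evaluation $\Delta_{T(m,\beta_g)}(t)/(1-t)=\frac{1-t^{n_g\beta_g}}{(1-t^m)(1-t^{\beta_g})}$ then turns the factorization into the identity for $K$.

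Finally, the conductor statement follows formally. Writing $\Delta(t)=\sum_{k=0}^{\mu}a_kt^k$, the coefficient of $t^n$ in $\Delta(t)/(1-t)$ is the partial sum $\sigma_n=\sum_{k\le n}a_k$, which by the identity equals the indicator of $n\in S$ and so lies in $\{0,1\}$. As $\Delta$ is monic of degree $\mu$ with $\Delta(1)=1$, we get $\sigma_n=\Delta(1)=1$ for all $n\ge\mu$, hence $[\mu,\infty)\cap\BZ\subset S$, and $\sigma_{\mu-1}=\Delta(1)-a_\mu=0$, hence $\mu-1\notin S$; thus $\mu$ is exactly the conductor. The main obstacle is packaging the plane-branch input in the inductive step---identifying the companion of the final cabling with the truncated branch $K'$, verifying $S'=\tfrac1m\langle\beta_0,\dots,\beta_{g-1}\rangle$, and invoking the satellite Alexander formula with the correct cable parameters; granting these, the remaining argument is elementary semigroup and power-series bookkeeping. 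An alternative route would replace the cabling induction by the monodromy zeta function $\zeta(t)=(1-t)/\Delta(t)$ combined with A'Campo's formula, but it ultimately requires the same combinatorics, so I would favor the direct induction above.
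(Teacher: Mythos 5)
Your argument is correct, and it is worth noting that the paper does not actually prove Proposition \ref{alex 1 component} at all: it is quoted from \cite{CDG}, where the identity $\sum_{s\in S}t^s=\Delta(t)/(1-t)$ is obtained analytically, via the Poincar\'e series of the valuation filtration on the local ring of the branch. Your route is the standard topological alternative: induction on the number of Puiseux pairs using the iterated-cabling structure, Litherland's satellite formula $\Delta_K(t)=\Delta_{K'}(t^m)\Delta_{T(m,\beta_g)}(t)$, and the unique-representation decomposition $s=ms'+k\beta_g$, $0\le k<n_g$, of the semigroup. All the ingredients you invoke are genuinely standard for plane branches (the truncated branch as cabling companion, $\langle\beta_0,\dots,\beta_{g-1}\rangle=mS'$, $\gcd(m,\beta_g)=1$, and $n_g\beta_g\in mS'$ --- the last also follows from the inductive conductor bound since $\beta_g$ exceeds the conductor of $S'$), the base case $T(p,q)$ is a direct computation, and the deduction of the conductor statement from $\sigma_n\in\{0,1\}$, $\Delta(1)=1$ and $a_\mu=1$ is airtight. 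What your approach buys is a self-contained, purely knot-theoretic proof that also makes transparent why $\deg\Delta=\mu$ equals the conductor; what the \cite{CDG} approach buys is a statement and proof that generalize directly to the multivariable Hilbert-function identities for reducible germs that the paper uses elsewhere (Subsection \ref{ss:algfacts}), which the cabling induction does not immediately give.
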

\begin{corollary}
\label{cor:zeta with a factor}
For any $M\ge \mu$
all the coefficients of the polynomial $\Delta(t)\cdot \frac{1-t^M}{1-t}$ are equal to 0 or 1.
If $\mu\leq s<M$ then the coefficient at $t^s$ in this polynomial equals 1.
\end{corollary}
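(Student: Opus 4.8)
The plan is to reduce everything to the two facts recorded in Proposition \ref{alex 1 component}: that $\Delta(t)/(1-t)=\sum_{s\in S}t^s$, and that $S$ is the semigroup of the knot $K$, which is closed under addition and whose conductor is $\mu$ (so that $s\in S$ for every $s\ge\mu$). Write $P(t):=\Delta(t)/(1-t)=\sum_{s\in S}t^s$ and let $\chi_S$ denote the indicator function of $S$, extended by $\chi_S(s)=0$ for $s<0$. Then the coefficient of $t^s$ in $P(t)$ is exactly $\chi_S(s)$.

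First I would rewrite the polynomial in question as
$$
\Delta(t)\cdot\frac{1-t^M}{1-t}=(1-t^M)\,P(t),
$$
so that the coefficient of $t^s$ equals $\chi_S(s)-\chi_S(s-M)$. Since each of $\chi_S(s)$ and $\chi_S(s-M)$ lies in $\{0,1\}$, this difference automatically lies in $\{-1,0,1\}$, and the only thing left to rule out is the value $-1$.

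The key step is therefore to show that $\chi_S(s-M)=1$ forces $\chi_S(s)=1$, i.e. that $s-M\in S$ implies $s\in S$. This is immediate from the semigroup structure: because $M\ge\mu$ and $\mu$ is the conductor, we have $M\in S$; hence if $s-M\in S$ then $s=(s-M)+M$ is a sum of two elements of $S$, so $s\in S$. This eliminates the value $-1$ and proves that every coefficient is $0$ or $1$.

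Finally, for the range $\mu\le s<M$ I would evaluate the two indicators separately: $s\ge\mu$ gives $\chi_S(s)=1$, while $s<M$ gives $s-M<0$ and hence $\chi_S(s-M)=0$, so the coefficient of $t^s$ equals $1-0=1$. There is no genuine obstacle in this argument; the whole statement is a formal consequence of the semigroup description of $\Delta(t)/(1-t)$, and the only place the hypothesis $M\ge\mu$ is used is the membership $M\in S$ invoked in the key step.
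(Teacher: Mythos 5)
Your proof is correct and is exactly the argument the paper intends: the corollary is stated without proof as an immediate consequence of Proposition \ref{alex 1 component}, and your computation of the coefficient of $t^s$ as $\chi_S(s)-\chi_S(s-M)$, together with ruling out the value $-1$, is the evident verification. (A tiny simplification: $s-M\in S$ already forces $s-M\ge 0$, hence $s\ge M\ge\mu$ and $s\in S$ by the conductor property alone, so the semigroup additivity is not even needed.)
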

\subsection{}
We will also need the following  facts about two--component algebraic links
(see \cite{CDG, GN2} and references therein):
\begin{enumerate}
\item The topologically defined $h$--function (cf. \ref{ss:LFH})
of an algebraic link coincides with the (analytic) Hilbert function of $C$ and it is
determined by the semigroup as follows: $h(v_1+1,v_2)=h(v_1,v_2)+1$, if there exists $u\in S_C$
such that $u_1=v_1$ and $u_2\ge v_2$. Otherwise $h(v_1+1,v_2)=h(v_1,v_2)$. The difference $h(v_1,v_2+1)-h(v_1,v_2)$ can be described in a similar way.
\item
If $u,v\in S_C$ then ${\rm inf}(u,v)\in S_C$ as well. Hence
$v\in S_C$ if and only if
$$
h(v_1+1,v_2)=h(v_1,v_2+1)=h(v_1,v_2)+1.
$$
\item A coefficient $a_v$ ($v=(v_1,v_2)$) of $t_1^{v_1}t_2^{v_2}$
in the Alexander polynomial equals
$$
a_v=h(v_1+1,v_2)+h(v_1,v_2+1)-h(v_1,v_2)-h(v_1+1,v_2+1).
$$
Using the above description of $h(v)$, one can check that
$$
a_v=\begin{cases}
1\ \text{if}\ h(v_1+1,v_2)=h(v_1,v_2+1)=h(v_1+1,v_2+1)=h(v_1,v_2)+1,\\
0\ \text{otherwise}.\\
\end{cases}
$$
\item In particular, if $a_v=1$ (so $v\in \Supp(\Delta)$) then $v$ belongs to the semigroup of $C$.
Furthermore, $v\in  \Supp(\Delta)$ if and only if $v\in S_C$, and $S_C\cap \{(v_1,u_2)\,:\, u_2>v_2\}=
S_C\cap \{(u_1,v_2)\,:\, u_1>v_1\}=\emptyset$. This also shows that $\Supp(\Delta)$
cannot have distinct  pairs $u,v$ with $u_1=v_1$ or with $u_2=v_2$.
\item Using (\ref{symmetry}), $v\in \Supp(\Delta)$ if and only if
$v\in S_C$ and $v^*-{\bf 1}\in S_C$.  (Here ${\bf 1}=(1,1)$.) Hence $v\in \Supp(\Delta)$
if and only if $c-{\bf 1}-v\in \Supp(\Delta)$.
\item
$h(v)=h({\rm sup}(v,(0,0)))$, and
the $h$--functions for the components of $L$ are given by: $$h_1(v)=h(v_1,0), \ \ h_2(v)=h(0,v_2).$$
\end{enumerate}

\begin{lemma}
\label{alg good}
A point $v=(v_1,v_2)$ is good for an algebraic link $L$ if and only if there exist semigroup points
$$a\in [v_1,+\infty)\times [0,v_2-1]\ \text{and}\ b\in  [0,v_1-1]\times [v_2,+\infty).$$
\end{lemma}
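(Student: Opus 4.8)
The plan is to translate each of the two defining inequalities for goodness into the existence of a single semigroup point, by telescoping the $h$-function along one coordinate axis and reading off its increments from the combinatorial description recalled in Subsection \ref{ss:algfacts}.

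First I would use the identities $h_1(v_1)=h(v_1,0)$ and $h_2(v_2)=h(0,v_2)$ (property (6)) to rewrite goodness of $v$ as the conjunction of $h(v_1,v_2)>h(v_1,0)$ and $h(v_1,v_2)>h(0,v_2)$, and I would prove the equivalence of each inequality with the existence of $a$, resp. $b$, separately. For the first inequality I would expand the difference as the telescoping sum
\[
h(v_1,v_2)-h(v_1,0)=\sum_{j=0}^{v_2-1}\bigl(h(v_1,j+1)-h(v_1,j)\bigr),
\]
in which, by the symmetric form of the increment rule of property (1), each summand equals $1$ when there is $u\in S_C$ with $u_2=j$ and $u_1\ge v_1$, and equals $0$ otherwise. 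Since the summands are nonnegative, the sum is strictly positive precisely when at least one of them equals $1$, that is, precisely when there exists $u\in S_C$ with $u_2\in[0,v_2-1]$ and $u_1\ge v_1$; this is exactly the existence of the point $a\in[v_1,+\infty)\times[0,v_2-1]$.

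Then I would run the mirror argument for the second inequality, telescoping $h(v_1,v_2)-h(0,v_2)$ along the first coordinate and applying the increment rule of property (1) verbatim: the $i$-th increment is $1$ iff there is $u\in S_C$ with $u_1=i$ and $u_2\ge v_2$, so strict positivity is equivalent to the existence of a semigroup point $b\in[0,v_1-1]\times[v_2,+\infty)$. Conjoining the two equivalences yields the lemma.

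The argument is essentially bookkeeping, so I do not expect a genuine obstacle; the two points to keep straight are, first, matching the index range of each telescoping sum to the stated coordinate window for $a$ and for $b$ (with the roles of the two coordinates and the two inequalities swapped), and, second, the degenerate cases $v_1\le 0$ or $v_2\le 0$, in which the corresponding window is empty and the relevant inequality indeed fails: using $h(v)=h(\sup(v,(0,0)))$ one has $h(v_1,v_2)=h(v_1,0)$ when $v_2\le 0$ (both reduce to $h(\max(v_1,0),0)$), and symmetrically $h(v_1,v_2)=h(0,v_2)$ when $v_1\le 0$.
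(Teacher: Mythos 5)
Your proposal is correct and follows essentially the same route as the paper: the paper's proof telescopes $h(v_1,v_2)-h(v_1,0)$ over the second coordinate, reads off each $0/1$ increment from the semigroup description of the $h$--function, and leaves the mirror inequality implicit. Your extra attention to the degenerate windows $v_1\le 0$ or $v_2\le 0$ is a harmless refinement of the same argument.
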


\begin{proof}
Consider the difference
$$
h(v)-h_1(v)=h(v_1,v_2)-h(v_1,0)=\sum_{j=0}^{v_2-1}(h(v_1,j+1)-h(v_1,j)).
$$
In the last sum each summand is either equal to 0 or to 1, hence $h(v)-h_1(v)>0$
if and only if $h(v_1,j+1)-h(v_1,j)=1$ for at least one  $j\in [0,v_2-1]$. The latter equation holds if
there is a semigroup point $a=(a_1,a_2)$ such that $a_1\ge v_1$ and $a_2=j$.
\end{proof}

\begin{lemma}
\label{alg very good}
If the Alexander polynomial $\Delta$ is not of ordered type then there is a very good point for $L$.
\end{lemma}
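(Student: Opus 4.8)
The plan is to construct an explicit very good point, exploiting the incomparability forced by the failure of orderedness together with the reflection symmetry of $\Supp(\Delta)$ recorded in Subsection \ref{ss:algfacts}. Since $\Delta$ is not of ordered type, there are $u,w\in\Supp(\Delta)$ with neither $u\preceq w$ nor $w\preceq u$. By fact (4) of Subsection \ref{ss:algfacts} distinct points of $\Supp(\Delta)$ have distinct first and distinct second coordinates, so incomparability forces, after possibly interchanging $u$ and $w$, the strict inequalities $u_1<w_1$ and $u_2>w_2$. Because $\Supp(\Delta)\subseteq S_C$, both $u$ and $w$ are semigroup elements, and applying the symmetry $z\in\Supp(\Delta)\Leftrightarrow c-{\bf 1}-z\in\Supp(\Delta)$ (fact (5)), the reflected points $u':=c-{\bf 1}-u$ and $w':=c-{\bf 1}-w$ also lie in $\Supp(\Delta)\subseteq S_C$.

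I would then take $v:=(w_1,u_2)$; in fact any $v$ with $u_1<v_1\le w_1$ and $w_2<v_2\le u_2$ works identically. To see that $v$ is good I apply Lemma \ref{alg good} with witnesses $a=w$ and $b=u$: one has $w\in[v_1,+\infty)\times[0,v_2-1]$ since $w_1\ge v_1$ and $w_2\le u_2-1=v_2-1$, and $u\in[0,v_1-1]\times[v_2,+\infty)$ since $u_1\le w_1-1=v_1-1$ and $u_2\ge v_2$. All these inequalities amount to $u_1<w_1$ and $w_2<u_2$, and the nonnegativity requirements hold because $u,w\in S_C\subseteq(\BZ_{\ge0})^2$.

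For the dual point $v^*=c-v=(c_1-w_1,\,c_2-u_2)$ I apply Lemma \ref{alg good} once more, now with $a=u'$ and $b=w'$. A direct check shows $u'\in[v^*_1,+\infty)\times[0,v^*_2-1]$ and $w'\in[0,v^*_1-1]\times[v^*_2,+\infty)$; after cancelling $c_1$ and $c_2$ the required inequalities collapse again to $u_1<w_1$ and $w_2<u_2$, which are already in hand. Hence $v^*$ is good, so $v$ is very good and the lemma follows.

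The one genuinely delicate point is to make a \emph{single} lattice point good simultaneously with its dual; this is exactly what the reflection provides, since the symmetry of $\Supp(\Delta)$ carries the witness pair $(w,u)$ certifying goodness of $v$ to the witness pair $(u',w')$ certifying goodness of $v^*$, so no separate search for $v^*$ is needed. Everything else is the routine inequality bookkeeping indicated above.
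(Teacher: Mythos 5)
Your proof is correct and follows essentially the same route as the paper: you extract incomparable points of $\Supp(\Delta)$, certify goodness of a point in the rectangle they span via Lemma \ref{alg good}, and transfer goodness to the dual point using the symmetry $z\in\Supp(\Delta)\Leftrightarrow c-{\bf 1}-z\in\Supp(\Delta)$. The only cosmetic difference is that you fix the single point $\sup(u,w)$, whereas the paper observes that every lattice point $w$ with $\inf(u,v)+{\bf 1}\preceq w\preceq \sup(u,v)$ is very good.
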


\begin{proof}
Suppose that the Alexander polynomial $\Delta$ is not of ordered type.
This means that there are points $u=(u_1,u_2),v=(v_1,v_2)\in \Supp(\Delta)$ such that
$u_1<v_1$ but $u_2>v_2$.

Since $u$ and $v$ are both in the semigroup, by Lemma \ref{alg good}
all points $w$ satisfying
\begin{equation}
\label{eq w}
\inf(u,v)+{\bf 1}\preceq w\preceq \sup(u,v)
\end{equation}
are good.
Furthermore, by the symmetry of $\Delta$,
the points $c-{\bf 1}-u$ and $c-{\bf 1}-v$ belong to its support too, and clearly
$$
\inf(c-{\bf 1}-u,c-{\bf 1}-v)+{\bf 1}\preceq c-w\preceq \sup(c-{\bf 1}-u,c-{\bf 1}-v),
$$
hence $w^*=c-w$ is a good point too. Therefore any $w$ satisfying \eqref{eq w} is very good.
\end{proof}


\begin{lemma}
\label{alex on a line}
Suppose that $0<v_1<l:=\lk(L_1,L_2)$ and $v_1$ belongs to the semigroup of $C_1$. Then there exists $v_2>0$
such that $(v_1,v_2)\in \Supp(\Delta)$.
\end{lemma}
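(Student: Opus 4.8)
The plan is to count the points of $\Supp(\Delta)$ lying on the vertical line $\{v_1\}\times\BZ$ by specializing $t_2=1$ in the Alexander polynomial. By item (4) of Subsection~\ref{ss:algfacts} (which shows that $\Supp(\Delta)$ contains no two points with equal first coordinate), this line carries at most one support point, so
$$
b_{v_1}:=\sum_{v_2\in\BZ}a_{(v_1,v_2)}\in\{0,1\}
$$
is precisely the number of such points, and $\sum_{v_1}b_{v_1}t^{v_1}=\Delta(t,1)$. It therefore suffices to prove $b_{v_1}=1$, and then to check that the unique resulting second coordinate is strictly positive.

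To evaluate $\Delta(t,1)$ I would use the $h$--function description of the coefficients. Rewriting item (3) as $a_{(v_1,v_2)}=g(v_1,v_2)-g(v_1,v_2+1)$, where $g(v_1,v_2):=h(v_1+1,v_2)-h(v_1,v_2)$, the sum over $v_2\in\BZ$ telescopes to $b_{v_1}=g(v_1,-\infty)-g(v_1,+\infty)$, the two ends being genuine limits since $g(v_1,\cdot)$ is eventually constant in both directions. Writing $[P]$ for $1$ if $P$ holds and $0$ otherwise, item (6) gives $h(v_1,v_2)=h_1(v_1)$ for $v_2\le 0$, so by item (1) in the limit $v_2\to-\infty$ one has $g(v_1,-\infty)=h_1(v_1+1)-h_1(v_1)=[\,v_1\in S_{C_1}\,]$. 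For the upper end I would apply the symmetry \eqref{symmetry}, which yields $g(v_1,v_2)=1-g(c_1-1-v_1,\,c_2-v_2)$; since $c_2-v_2\to-\infty$ as $v_2\to+\infty$, the previous computation gives $g(v_1,+\infty)=1-[\,c_1-1-v_1\in S_{C_1}\,]$. For $0<v_1<l$ we have $c_1-1-v_1=2g_1+l-1-v_1\ge 2g_1$, which is at or beyond the conductor of $S_{C_1}$ (Proposition~\ref{alex 1 component}), so $c_1-1-v_1\in S_{C_1}$ and the upper end vanishes. Hence $b_{v_1}=[\,v_1\in S_{C_1}\,]=1$ by the hypothesis $v_1\in S_{C_1}$. (Equivalently, the same coefficient may be read off from the Torres specialization $\Delta(t,1)=(1-t^{l})\sum_{s\in S_{C_1}}t^{s}$, whose normalization is pinned down by the fact that $(0,0)\in\Supp(\Delta)$.)

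It remains to see that the unique $v_2$ with $(v_1,v_2)\in\Supp(\Delta)$ is positive. Since $\Supp(\Delta)\subset S_C\subset(\BZ_{\ge 0})^2$ we have $v_2\ge 0$, and $v_2=0$ is impossible: a point $(v_1,0)\in S_C$ with $v_1>0$ would be realized by some $f$ with $\nu_2(f)=\Ord_t f(\gamma_2(t))=0$, i.e.\ $f(0)\ne 0$, which forces $\nu_1(f)=0\ne v_1$. Thus $v_2>0$, as required. I expect the main obstacle to be the evaluation of the upper end $g(v_1,+\infty)$: controlling the $v_2\to+\infty$ behaviour (equivalently, excluding a spurious shift in the Torres specialization) is exactly the place where the hypothesis $v_1<l$ is used, via the estimate $c_1-1-v_1\ge 2g_1$ which places the dual first coordinate beyond the conductor of $S_{C_1}$.
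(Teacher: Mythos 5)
Your proof is correct, but its main line of argument is genuinely different from the paper's. The paper disposes of the lemma in a few lines: by the Torres formula \eqref{torres} and Proposition \ref{alex 1 component}, the coefficient of $t^{v_1}$ in $\Delta(t,1)=\frac{\Delta_1(t)}{1-t}\cdot(1-t^l)$ equals $1$ when $v_1\in S_{C_1}$ and $v_1<l$ (the term $-t^l\sum_{s\in S_{C_1}}t^s$ cannot contribute in degree $v_1<l$), and this coefficient counts the support points on the line $\{v_1\}\times\BZ$. You instead recover the same count intrinsically from the $h$--function: writing $a_{(v_1,v_2)}=g(v_1,v_2)-g(v_1,v_2+1)$ with $g(v_1,v_2)=h(v_1+1,v_2)-h(v_1,v_2)$, telescoping, and evaluating the two ends via items (1) and (6) of Subsection \ref{ss:algfacts} together with the symmetry \eqref{symmetry} and the conductor bound $c_1-1-v_1\ge 2g_1$. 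In effect you reprove the needed instance of the Torres specialization from the lattice data; this costs some length but is self-contained, and it pinpoints exactly where the hypothesis $v_1<l$ enters (it pushes the dual coordinate $c_1-1-v_1$ past the conductor of $S_{C_1}$, killing the upper limit of $g$), whereas in the paper it enters as $v_1-l<0\notin S_{C_1}$. Your justification of $v_2>0$ --- that $(v_1,0)\in S_C$ with $v_1>0$ is impossible because $\nu_2(f)=0$ forces $f$ to be a unit, hence $\nu_1(f)=0$ --- is also more explicit than the paper, which simply writes the count as $\sum_{v_2>0}a_{v_1,v_2}$ without comment.
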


\begin{proof}
By Torres  formula \cite{Torres}
\begin{equation}
\label{torres}
\Delta(t,1)=\frac{\Delta_1(t)}{1-t}\cdot(1-t^l),
\end{equation}
where $\Delta_1(t)$ is the Alexander polynomial of $L_1$.
By Proposition \ref{alex 1 component}  the coefficient of $t^{v_1}$ in $\frac{\Delta_1(t)}{1-t}$  equals $1$. Since $v_1<l$,
the coefficient of $t^{v_1}$ in polynomial from  the right hand side of \eqref{torres} equals 1 as well.
But this number, read from the left hand side of \eqref{torres}, is
$$
\sum_{v_2> 0}a_{v_1,v_2}=|\,\{v_2:(v_1,v_2)\in \Supp(\Delta)\}\,|.
$$

\vspace*{-10mm}

\end{proof}

\subsection{The Alexander polynomial from resolution graphs}
\label{ss:ordered to min}




Let  $\Gamma$ be the dual graph (with non--arrowhead vertices $\Vv$ and two arrowheads)
of a good  embedded resolution of $(C,0)\subset (\BC^2,0)$. Let $I$ be the intersection matrix and define
$m_{vw}$ as the $(v,w)$--entry of $-I^{-1}$.
It is well known that $m_{vw}\ge 0$ (see also \ref{BraunN}(b) below).
If $v_1$ and $v_2$ support the arrowheads corresponding to the link components, and $\delta_w$ denotes the valency
of the non--arrowhead vertex $w$ (including the arrowhead supporting edges) then  (see e.g. \cite{EN})
\begin{equation}
\label{eq:EN}
\Delta(t_1,t_2)=\prod_{u\in \Vv}(1-t_1^{m_{uv_1}}t_2^{m_{uv_2}})^{\delta_u-2}.
\end{equation}
Sometimes (for brevity) we use splice diagrams instead of resolution (for their definition, properties and
equivalence with resolution graphs, see \cite{EN}). They can be obtained as follows: one
 erases all two--valent vertices from $\Gamma$ and
write on the $u$--end  of an edge $(u,v)$ of the resulting graph the determinant of the connected component of $\Gamma-u$
containing $v$ (see also figures below).
By Lemma \ref{BraunN}(b) and \eqref{eq:EN} this data is sufficient to recover the Alexander polynomial
from the splice diagram (see also \cite{EN}).

\subsection{Determinantal properties of resolution graphs}\label{ss:detprop}
We will need several arithmetical properties of the multiplicities $m_{vw}$ (and of the
decorations of the splice diagrams). We list here some of them.
Recall that by our convention $\det(G)=\det(-I_G)$ and $\det(\emptyset)=1$.
Hence $\det(G)>0$ for any subgraph $G$ of
$\Gamma$. Moreover $\det(\Gamma)=1$.

Consider a decomposition of a negative definite connected  graph $\widetilde{G}$ (with no arrowheads)
shown in Figure \ref{fig:BraunN}, and let
 $\overline{uv}$ denote the shortest path in  $G$ connecting  $u$ and $v$.
 (If $G$ is merely an edge then its determinant is 1.)
 Set also
$$
\det(G')=a,\det(G\cup G''\cup v)=p, \det(G'')=p', \det(G\cup G'\cup u)=a', \det(G-\overline{uv})=g.$$

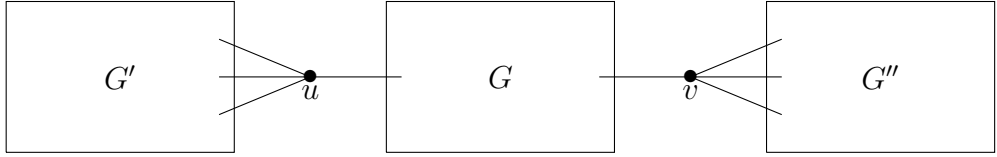
\begin{figure}[ht!]
\begin{tikzpicture}
\draw (0,0)--(0,2)--(3,2)--(3,0)--(0,0);
\draw (4,1) node {$\bullet$};
\draw (5,0)--(5,2)--(8,2)--(8,0)--(5,0);
\draw (9,1) node {$\bullet$};
\draw (10,0)--(10,2)--(13,2)--(13,0)--(10,0);
\draw (2.8,1.5)--(4,1);
\draw (2.8,1)--(4,1);
\draw (2.8,0.5)--(4,1);
\draw (5.2,1)--(4,1);
\draw (7.8,1)--(9,1);
\draw (10.2,1.5)--(9,1);
\draw (10.2,1)--(9,1);
\draw (10.2,0.5)--(9,1);
\draw (1.5,1) node {$G'$};
\draw (6.5,1) node {$G$};
\draw (11.5,1) node {$G''$};
\draw (4,0.8) node {$u$};
\draw (9,0.8) node {$v$};
\end{tikzpicture}
\caption{Decomposition of $\Gamma$ in Lemma \ref{BraunN}}
\label{fig:BraunN}
\end{figure}

Part (a)  of the next Lemma is proved in  \cite[Lemma 4.0.1]{BraunN}, part
(b) in \cite{EN}.
\begin{lemma}\label{BraunN}
(a) \ $\det(G)\cdot\det(\widetilde{G})=a'p-ap'g^2$. \\
\hspace*{2.5cm} (b) \ If\, $\widetilde{G}=\Gamma$ then $m_{uv}=\det(\Gamma-\overline{uv})=ap'g$.
\end{lemma}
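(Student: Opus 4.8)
Both identities are purely determinantal, so the plan is to extract them from two standard facts about the positive definite matrix $N:=-I_{\widetilde{G}}$: multiplicativity of the determinant over connected components, and the Desnanot--Jacobi (Dodgson condensation) identity. The key observation is that $u$ and $v$ are cut vertices of $\widetilde{G}$: deleting $u$ separates $G'$ from the rest, deleting $v$ separates $G''$ from the rest, and deleting both leaves the disjoint union $G'\sqcup G\sqcup G''$. I would establish (b) first, because the off--diagonal minor produced by condensation is exactly the quantity computed there, and then feed it into (a).

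\textbf{Block factorizations.} Since the matrix of a disconnected graph is block diagonal, the cut--vertex structure gives at once
$$
\det(\widetilde{G}-u)=\det(G')\cdot\det(G\cup G''\cup v)=ap,\qquad
\det(\widetilde{G}-v)=\det(G'\cup G\cup u)\cdot\det(G'')=a'p',
$$
$$
\det(\widetilde{G}-u-v)=\det(G')\det(G)\det(G'')=a\,\det(G)\,p'.
$$
Removing the whole path $\overline{uv}$ (which contains $u$, $v$ and the intermediate vertices lying inside $G$) disconnects $\widetilde{G}$ into $G'$, $G-\overline{uv}$ and $G''$, so
$$
\det(\widetilde{G}-\overline{uv})=\det(G')\,\det(G-\overline{uv})\,\det(G'')=a\,p'\,g .
$$

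\textbf{Part (b).} Write $N^{i}_{j}$ for $N$ with row $i$ and column $j$ deleted. For a negative definite \emph{weight--one} plumbing tree the all--minors matrix--tree theorem (equivalently, an induction along the path $\overline{uv}$, cf. \cite{EN}) yields the cofactor identity $(-1)^{u+v}\det\!\big(N^{v}_{u}\big)=\det(\widetilde{G}-\overline{uv})$. Specializing to $\widetilde{G}=\Gamma$, where $\det(\Gamma)=1$, Cramer's rule gives
$$
m_{uv}=(-I^{-1})_{uv}=(-1)^{u+v}\det\!\big(N^{v}_{u}\big)=\det(\Gamma-\overline{uv}),
$$
and the factorization above turns this into $m_{uv}=\det(\Gamma-\overline{uv})=ap'g$, which is (b).

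\textbf{Part (a).} Apply the Desnanot--Jacobi identity to $N$ with distinguished indices $u,v$:
$$
\det(N)\,\det\!\big(N^{uv}_{uv}\big)=\det\!\big(N^{u}_{u}\big)\det\!\big(N^{v}_{v}\big)-\det\!\big(N^{u}_{v}\big)\det\!\big(N^{v}_{u}\big).
$$
Here $\det(N)=\det(\widetilde{G})$ and $\det\!\big(N^{uv}_{uv}\big)=\det(\widetilde{G}-u-v)=a\det(G)p'$, while the two symmetric minors contribute $ap$ and $a'p'$ by the block factorizations. Since $N$ is symmetric, $N^{u}_{v}=(N^{v}_{u})^{T}$, so $\det\!\big(N^{u}_{v}\big)\det\!\big(N^{v}_{u}\big)=\big(\det(N^{v}_{u})\big)^{2}=(ap'g)^{2}$ by the cofactor identity of Part (b). Substituting,
$$
\det(\widetilde{G})\cdot a\,\det(G)\,p'=(ap)(a'p')-(ap'g)^{2},
$$
and dividing by $ap'>0$ (using $\det(\emptyset)=1$ when $G'$ or $G''$ is empty) yields $\det(G)\det(\widetilde{G})=a'p-ap'g^{2}$, which is (a).

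\textbf{Main obstacle.} The only nontrivial input is the path--minor identity of Part (b): the clean evaluation of the off--diagonal minor $\det(N^{v}_{u})$ as $\pm\det(\widetilde{G}-\overline{uv})$. Everything else is bookkeeping with block--diagonal determinants and a single application of condensation. The induction establishing this identity must track that each edge along the path carries weight one (so no extra multiplicative factors enter) and that the sign $(-1)^{u+v}$ exactly compensates the sign of the minor; this is where care is needed, although the positivity of the subgraph determinant $\det(\widetilde{G}-\overline{uv})$ fixes the sign unambiguously.
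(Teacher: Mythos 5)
Your argument is correct, but it is not the paper's argument: the paper gives no proof of this lemma at all, deferring part (a) to \cite{BraunN} (Lemma 4.0.1 there) and part (b) to \cite{EN}. What you have produced is a self-contained derivation, and it checks out. The block factorizations $\det(\widetilde{G}-u)=ap$, $\det(\widetilde{G}-v)=a'p'$, $\det(\widetilde{G}-u-v)=a\det(G)p'$ and $\det(\widetilde{G}-\overline{uv})=ap'g$ are all immediate from the cut--vertex structure (note that for the last one, and for the statement of (b) to be consistent with $g=\det(G-\overline{uv})$, the path $\overline{uv}$ must be understood to contain $u$ and $v$, as you assume). The Desnanot--Jacobi step then yields $\det(\widetilde{G})\,a\det(G)p'=(ap)(a'p')-(ap'g)^2$, and division by $ap'>0$ gives (a); the sign of the off--diagonal minor is absorbed by squaring, so (a) does not even need the sign analysis. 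The one genuine input, as you correctly isolate, is the path--minor identity $(-1)^{u+v}\det(N^v_u)=\det(\widetilde{G}-\overline{uv})$; this is a standard fact for trees whose off--diagonal entries of $-I$ are all $-1$ (which is the case for the plumbing trees considered here), provable by induction along $\overline{uv}$, and it is exactly the content of the \cite{EN} computation of linking numbers from splice/resolution data. Combined with Cramer's rule and $\det(\Gamma)=1$ it gives (b). In short: the proposal supplies a complete proof where the paper only supplies citations; the price is that you must justify the tree cofactor identity, and your argument would need modification if $\widetilde{G}$ were allowed to have cycles (it is not, in this paper's setting).
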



\begin{lemma}
\label{lem: inequalities for z and w}
Consider again Figure \ref{fig:BraunN} with $a,p,a',p'$ as above.
 Assume that $\det(\widetilde{G})=1$
and $G-\overline{uv}=\emptyset$ (so $g=1$).
Then there exists positive integers $z$ and $w$ such that
\begin{equation}
\label{eq: cone plus quadrant}
\frac{a'}{p'}>\frac{z}{w}>\frac{a}{p},
\end{equation}
i.e.,  $(zp,zp')$ and $(wa,wa')$ are not comparable with respect to
the partial order of\, $\BZ^2$.
Additionally,
 \begin{equation}\label{eq:3cases}
\left\{ \begin{array}{l}
\mbox{(a) \ if $E_u^2=-1$ and $G'$ is connected then $z<a$}, \\ 
\mbox{(b) \ if $E_v^2=-1$ and $G''$ is connected then $w<p'$}. 
\end{array}\right.\end{equation}
\end{lemma}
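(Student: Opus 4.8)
The plan is to split the statement into its easy half---the incomparability \eqref{eq: cone plus quadrant}---and its genuinely hard half, the refined bounds \eqref{eq:3cases}. Everything rests on the single inequality $\frac ap<\frac{a'}{p'}$. Under the two standing hypotheses $\det(\widetilde{G})=1$ and $G-\overline{uv}=\emptyset$ (so $g=\det(\emptyset)=1$), Lemma \ref{BraunN}(a) collapses to $\det(G)=a'p-ap'$. Since $G$ is a subgraph, $\det(G)>0$, whence $a'p>ap'$ and $\frac ap<\frac{a'}{p'}$. The open interval $\left(\frac ap,\frac{a'}{p'}\right)$ is then a nonempty interval of positive reals, so it contains a rational $\frac zw$ with $z,w$ positive integers. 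Cross-multiplying $\frac ap<\frac zw<\frac{a'}{p'}$ gives $zp>wa$ and $zp'<wa'$, i.e. $(zp,zp')$ beats $(wa,wa')$ in the first coordinate but loses in the second; this is precisely the asserted incomparability, and it needs no hypothesis on $E_u^2$ or $E_v^2$.

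For \eqref{eq:3cases}(a) I would first isolate what $E_u^2=-1$ buys. Let $u_0$ be the vertex of the connected branch $G'$ adjacent to $u$, let $x_1\in G$ be the path vertex adjacent to $u$, and set $a^-:=\det(G'-u_0)$, $b:=\det(G)$, $b^-:=\det(G-x_1)$. A leaf expansion along $u$ (whose $-I$-diagonal entry is $-E_u^2=1$) gives $\det(G'\cup u)=a-a^-$, and a further expansion across the edge $u\!-\!x_1$ gives the key identity $a'=(a-a^-)\,b-a\,b^-$. Positivity of $\det(G'\cup u)$ forces $1\le a^-<a$, so in particular $a\ge 2$ and a numerator strictly below $a$ is available; note that $a^-\ge 1$ also reflects the fact that a $(-1)$-vertex cannot be adjacent to $u$ in a negative definite graph.

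The natural candidate is $z:=a-a^-=\det(G'\cup u)$, a positive integer with $z<a$; geometrically it is the multiplier with which $u_0$ contributes the direction $(p,p')$, one step below the multiplier $a$ coming from $u$ itself. For this $z$ an admissible $w$ is any integer in the window $\left(\frac{zp'}{a'},\frac{zp}{a}\right)$, whose length equals $z\,\det(G)/(aa')=(a-a^-)b/(aa')$. Substituting $a'=(a-a^-)b-ab^-$ together with the normalisation $a'p-ap'=b$ reduces ``the window contains an integer'' to an explicit arithmetic inequality in $a,a^-,b,b^-,p,p'$. This Diophantine step is where I expect the real difficulty: a crude length estimate need not exceed $1$, so I would either sharpen it using the determinantal identities of Subsection \ref{ss:detprop}, or argue by induction, blowing down the $(-1)$-vertex $u$ (which preserves $\det(\widetilde{G})=1$) and transporting an admissible pair from the smaller graph back up.

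Finally, \eqref{eq:3cases}(b) requires no new work. The involution interchanging $u$ and $v$ (and $G'$ with $G''$) swaps $a\leftrightarrow p'$ and $a'\leftrightarrow p$, hence sends $\left(\frac ap,\frac{a'}{p'}\right)$ to its reciprocal $\left(\frac{p'}{a'},\frac{p}{a}\right)$ and exchanges the roles of $z$ and $w$; under it hypothesis (a) becomes hypothesis (b) and the conclusion $z<a$ becomes $w<p'$. Thus (b) follows from (a) by symmetry, and the whole weight of the lemma sits in producing, under $E_u^2=-1$, a companion denominator $w$ once the numerator has been pushed below $a$.
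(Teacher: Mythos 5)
Your treatment of the incomparability \eqref{eq: cone plus quadrant} is fine (and in fact more elementary than the paper's: since $g=1$ and $\det(\widetilde G)=1$, Lemma \ref{BraunN}(a) gives $a'p-ap'=\det(G)>0$, and density of the rationals does the rest). You also correctly identify the right numerator for case (a), namely $z=\det(G'\cup u)=a-\det(G'-u'')<a$, which is exactly the paper's $z_1$. But the proof has a genuine gap precisely where you flag it: you never produce the companion denominator $w$, and the crude length estimate $z\det(G)/(aa')$ for the window $\left(\tfrac{zp'}{a'},\tfrac{zp}{a}\right)$ indeed need not exceed $1$. The paper's resolution is not an estimate but another explicit determinant: with $u'$ the neighbour of $u$ in $G$, it takes $w_1=\det(G''\cup v\cup G\setminus u')$ and verifies both strict inequalities exactly via Lemma \ref{BraunN}(a), namely $pz_1-aw_1=\det(\emptyset)=1>0$ and $a'w_1-z_1p'=\det(G-u')>0$. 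In other words, the window contains the integer $w_1$ because the relevant cross--products are themselves subgraph determinants, hence positive integers; this is the missing idea, and no induction or blow-down is needed.

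A second, smaller omission: the lemma is used later (e.g.\ in case (II.Deg.c) of Theorem \ref{prop:splice1}) in the situation where \emph{both} hypotheses of \eqref{eq:3cases} hold, and then one needs a \emph{single} pair with $z<a$ and $w<p'$ simultaneously. Your symmetry argument produces one pair adapted to (a) and a different pair adapted to (b), but does not address this. The paper handles it by introducing the symmetric pair $(z_2,w_2)$ with $w_2=\det(G''\cup v)$, $z_2=\det(G'\cup u\cup G\setminus v')$, and checking that if neither $(z_1,w_1)$ nor $(z_2,w_2)$ already works then $z_1<a\le z_2$ and $w_2<p'\le w_1$, so the mixed pair $(z_1,w_2)$ still lies in the interval and satisfies both bounds. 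You would need to add both of these steps to make the argument complete.
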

\begin{proof}
Let $u',v'$ be the neighbors of $u$ and $v$ in $G$, respectively (they may coincide). Set
$
z_1=\det(G'\cup u)$ and $ w_1=\det(G''\cup v\cup G\setminus u')$.
If we apply Lemma  \ref{BraunN}(a) to $u'$ and $v$ we  get
$$
a'w_1-z_1p'=\det(G-u')>0\ \Rightarrow \frac{a'}{p'}>\frac{z_1}{w_1}.
$$
If we  apply Lemma \ref{BraunN}(a) to $u'$ and $u$ we obtain
$$
pz_1-aw_1=\det(\emptyset)=1>0\ \Rightarrow \frac{z_1}{w_1}>\frac{a}{p}.
$$
By similar computation for
$
w_2=\det(G''\cup v)$ and $ z_2=\det(G'\cup u\cup G\setminus v')
$
we get that
 both  pairs $(z_1,w_1)$ and $(z_2,w_2)$ satisfy  \eqref{eq: cone plus quadrant}.

In the situation of \eqref{eq:3cases}(a),
if $u''$ is the neighbour of $u$ in $G'$ then \eqref{BraunN}(a)
applied for $\widetilde{G}=G'\cup u$ gives
$z_1=a-\det(G'-u'')<a$. Hence $(z_1,w_1)$ satisfies all wished properties.

In case (b)  similarly $w_2<p'$, 
hence $(z_2,w_2)$ satisfies the needed properties.

Finally, assume that both assumptions of (a) and (b) are satisfied.
Then, if $w_1<p'$ then $(z_1,w_1)$ satisfies all requirements,
if $z_2<a$ then $(z_2,w_2)$ works; and
 if $z_2\ge a$ and $w_1\ge p'$ then $z_2\geq a>z_1$ and $w_1\geq p'>w_2$
and $(z_1,w_2)$ is the right choice.
\end{proof}

\section{Links with a trivial  component}\label{s:Trivi}

\subsection{}
In Figures \ref{fig: t46} and \ref{fig: ls two cusps} the sets $\ls(L)$ do not contain
points where the surgery coefficients have large absolute values of opposite signs.
The following results confirms that this is typical for $\ls(L)$.

\begin{theorem}
\label{unknot}
Suppose that $L\subset S^3$ is an L--space link with two components, $d_1\gg 0, d_2\ll 0$ and  $S^{3}_{d_1,d_2}(L)$ is an L--space.
Then $L_2$ is an unknot.
\end{theorem}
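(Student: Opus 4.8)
The plan is to reduce the statement to a fact about the single knot $L_2$ and then to play the large positive surgery on $L_1$ against the large negative surgery on $L_2$. First I would record that, since $L$ is an L--space link, every sublink is an L--space link by \cite[Lemma 1.10]{Liu}; in particular $L_2$ is an L--space knot. By Theorem \ref{th: ls knots} it then suffices to prove $g(L_2)=0$. As a sanity check, note that $\det(\Lambda)=d_1d_2-l^2<0$ for $d_1\gg 0,\ d_2\ll 0$, so $\Lambda$ is indefinite; by Theorem \ref{th:bounded2} this already shows $L$ has no very good point, but that alone does not give unknottedness, and the whole difficulty is to extract $g(L_2)=0$ from the existence of an L--space surgery in this region.

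The main step is to collapse the surgery complex $\Cc$ in the first direction. Because $d_1\gg 0$, the large surgery formula applies to the $L_1$--direction: fixing the second Alexander coordinate and passing to homology of the direction--$1$ mapping cone selects, in each $spin^c$ class, the stable complex $A^-(s_1,v_2)$ with $s_1\gg 0$ on the pieces with $2\in K$ and $A_{\{1\}}^-(s_1)$ on the pieces with $2\notin K$. What survives is a direction--$2$ knot surgery mapping cone whose $U$--powers are governed by the effective function $\tilde h(v_2):=h(s_1,v_2)-h_1(s_1)$. I would then compute $\tilde h$ from \eqref{symmetry} and \eqref{stabilization h}: for $s_1\gg 0$ one has $(s_1)^\ast=c_1-s_1\ll 0$, so these two identities give $h(s_1,v_2)=s_1-g_1+h_2(v_2-l)$ and $h_1(s_1)=s_1-g_1$, whence $\tilde h(v_2)=h_2(v_2-l)$. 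Thus the reduced object is exactly the knot surgery mapping cone of $L_2$, up to the harmless shift by the linking number $l$, with effective framing $\det(\Lambda)/d_1=d_2-l^2/d_1$.

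Since $d_2\ll 0$, this effective framing is large and negative, so the reduced cone computes large \emph{negative} surgeries on $L_2$. Requiring these to be L--spaces forces $\overline{L_2}$ to be an L--space knot, because $S^3_{n}(\kappa)\cong -S^3_{-n}(\overline\kappa)$ and L--spaceness is insensitive to orientation, so large negative L--space surgeries on $L_2$ are precisely large positive L--space surgeries on $\overline{L_2}$. Now both $L_2$ and $\overline{L_2}$ would be L--space knots. If $L_2$ were nontrivial, then $\ls(L_2)=[2g(L_2)-1,+\infty)$ by Theorem \ref{th: ls knots}, which contains no negative integer; but $\overline{L_2}$ being an L--space knot forces arbitrarily negative surgeries on $L_2$ to be L--spaces, a contradiction. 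Hence $g(L_2)=0$ and $L_2$ is the unknot.

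The hard part will be making the first--direction collapse rigorous. One must choose an honest finite truncation of the $L_1$--mapping cone adapted to $d_1\gg 0$, check that the truncation does not affect homology in the relevant $spin^c$ classes, and—most delicately—keep track of the absolute gradings of Corollary \ref{degree squares} while noting that the image of $L_2$ in $S^3_{d_1}(L_1)$ is \emph{not} null--homologous (its class is $l\in\BZ/d_1$), so that the identification of the reduced complex with a genuine $L_2$ knot surgery cone is compatible with the $spin^c$ decomposition; one must also upgrade a single L--space surgery to the family needed to recognize $\overline{L_2}$ as an L--space knot. An alternative, more self--contained route avoiding the large surgery formula is to exhibit directly, for $d_1\gg 0,\ d_2\ll 0$ and $g(L_2)>0$, a cycle built from $z_{\{1,2\}}$ and $z_{\{1\}}$ that is annihilated by $U$ yet is not a boundary, contradicting the L--space property exactly as in the proof of Theorem \ref{th:bounded2}; the obstacle there is that the purely direction--$2$ construction using only $z_{\{2\}}$ cannot detect the knotting (for the trefoil the two exponents $h_2(v_2)$ and $h_2(2g_2-v_2)$ are never simultaneously positive), so the witness must genuinely couple the large positive and large negative directions.
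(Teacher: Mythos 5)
Your opening reductions (Liu's lemma makes $L_2$ an L--space knot; Theorem \ref{th: ls knots} then makes it enough to exhibit a single L--space surgery on $L_2\subset S^3$ with negative coefficient) are fine, and that target is the right one. The gap is in the central step. After you collapse the direction--$1$ mapping cone using $d_1\gg 0$, the object you are left with is the surgery complex of the knot $L_2$ sitting inside $Y'=S^3_{d_1}(L_1)$, not of $L_2\subset S^3$. Your computation shows only that the \emph{local} data matches ($\tilde h(v_2)=h_2(v_2-l)$, which is correct); but the global zig--zag structure --- which generators lie in one $spin^c$ class, how the long differential by $\Lambda_2=(l,d_2)$ interacts with the identification modulo $\Lambda_1=(d_1,l)$, and the truncation pattern for a negative effective framing --- is that of a homologically essential, rationally framed knot in $Y'$. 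The phrase ``up to the harmless shift by the linking number $l$'' is exactly where the argument is not carried out: nothing you have established identifies this reduced cone with the integer surgery cone of $L_2\subset S^3$ (or of $\overline{L_2}$), so the deduction that large negative surgeries on $L_2\subset S^3$ are L--spaces, hence that $\overline{L_2}$ is an L--space knot, does not follow. You flag this yourself as ``the hard part,'' but it is not a deferrable verification --- it is the entire content of the theorem --- and your proposed ``self--contained'' alternative is likewise only a hope.

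The paper closes this by performing the surgeries in the other order. Set $Y=S^3_{d_2}(L_2)$; then $S^3_{d_1,d_2}(L)=Y_{d_1}(L_1)$ is a \emph{large positive} surgery on the knot $L_1\subset Y$, and the large surgery formula of \cite{OSsurg2,MO} gives the implication that if $Y_{d_1}(L_1)$ is an L--space for $d_1\gg0$ then $Y$ itself is an L--space. This lands directly on the statement that $S^3_{d_2}(L_2)$ is an L--space with $d_2\ll 0$, contradicting Theorem \ref{th: ls knots} for nontrivial $L_2$. The asymmetry is the whole point: peeling off the large \emph{positive} surgery last reduces to a genuine surgery on $L_2\subset S^3$, whereas peeling it off first (your route) strands you with a negative surgery on a knot in a nontrivial manifold. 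Reorganizing around this observation also makes your mirror/$\overline{L_2}$ detour unnecessary, since a single non--positive L--space surgery coefficient on a nontrivial L--space knot already contradicts Theorem \ref{th: ls knots}.
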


\begin{proof}
By \cite[Theorem 1.10]{Liu} both components $L_1$ and $L_2$ are L--space knots.
Consider the 3-manifold $Y=S^3_{d_2}(L_2)$, then $S^{3}_{d_1,d_2}(L)=Y_{d_1}(L_1)$ is a large surgery
on $Y$ along a knot $L_1$. By \cite{OSsurg2,MO} if $Y_{d_1}(L_1)$ is an L--space for $d_1\gg 0$, then
$Y$ itself is an L--space. Hence $S^3_{d_2}(L_2)$ is an L--space.
Suppose that $L_2$ is nontrivial. Then by Theorem \ref{th: ls knots}
$S^3_{d_2}(L_2)$ is an L--space if an only if $d_2\ge 2g(L_2)-1$, which contradicts $d_2\ll 0$.
\end{proof}

For algebraic links we have the following complete characterization.

\begin{theorem}\label{th:corner}
Suppose that $L$ is an algebraic link with two components associated with the curve singularity
$(C,0)\subset (\BC^2,0)$. Then the following facts are equivalent.

(1)  $L_2$ is an unknot, or equivalently, $(C_2,0)$ is smooth;

(2)  $(d_1,d_2)\in \ls(L)$  for any  $d_1\gg 0$ and  $d_2\ll 0$;

(3)  $(d_1,d_2)\in \ls(L)$  for any  $d_1\geq m_1$ and  $d_2\ll 0$;

(4) if\, $\Gamma$ is the embedded resolution graph of $(C,0)\subset (\BC^2,0)$, and
$v_2$ supports the arrowhead of $L_2$ then $v_2$ is simple vertex of $\Gamma$.
\end{theorem}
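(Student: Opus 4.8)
The plan is to establish the cycle $(4)\Rightarrow(3)\Rightarrow(2)\Rightarrow(1)$ together with the biconditional $(1)\Leftrightarrow(4)$, so that all four statements become equivalent. Two of the arrows are essentially free. The implication $(3)\Rightarrow(2)$ holds because the region $\{d_1\ge m_1,\ d_2\ll 0\}$ contains $\{d_1\gg 0,\ d_2\ll 0\}$. The implication $(2)\Rightarrow(1)$ is a direct application of Theorem \ref{unknot}: condition $(2)$ provides an L--space surgery with $d_1\gg 0$, $d_2\ll 0$, which forces $L_2$ to be an unknot.

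For $(1)\Leftrightarrow(4)$ I would show that the $E_{v_2}$--coefficient of the minimal cycle $Z_{\min}(\Gamma)$ equals the multiplicity $\mathrm{mult}_o(C_2)$, so that $v_2$ is simple exactly when $C_2$ is smooth. Since the ambient germ $(\BC^2,o)$ is smooth and hence rational, one has $\pi^*\mathfrak m=\mathcal O(-Z_{\min})$, so for a generic linear form $\ell$ the total transform decomposes as $\pi^*\ell=\widetilde{\ell}+Z_{\min}(\Gamma)$ (see Remark \ref{rem:analsimple}). As the generic strict transform $\widetilde\ell$ misses $\widetilde{C_2}$, this gives $\mathrm{mult}_o(C_2)=(\ell,C_2)_o=(Z_{\min}(\Gamma),\widetilde{C_2})$; and in a good embedded resolution $\widetilde{C_2}$ meets the exceptional locus transversally along $E_{v_2}$ only, so this intersection number is exactly the $E_{v_2}$--coefficient of $Z_{\min}(\Gamma)$. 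Thus this coefficient is $1$ iff $\mathrm{mult}_o(C_2)=1$ iff $C_2$ is smooth iff $L_2$ is the unknot, which is precisely the content of Remark \ref{rem:analsimple} applied to the branch $C_2$.

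The heart of the argument is $(4)\Rightarrow(3)$, which I would carry out by performing the two surgeries one component at a time. Assuming $v_2$ is simple, I first regard $L_2=K_{v_2}$ as the vertex--knot attached at $v_2$ in the non--arrowhead graph $\Gamma_0$, which represents $S^3$ and is rational. By Theorem \ref{th: l space simple vertex}(b), for $d_2\ll 0$ the manifold $Y:=S^3_{d_2}(L_2)$ is an L--space, and its plumbing graph $\Gamma^{(2)}_{d_2}$ (obtained by attaching a vertex $w_2$ of decoration $d_2-m_2\ll 0$ at $v_2$) is negative definite and, by Theorem \ref{th:ratL}, rational. Now $L_1=K_{v_1}$ is a vertex--knot in the negative definite rational graph manifold $Y$, and $S^3_{d_1,d_2}(L)$ is the surgery $Y_{d_1}(K_{v_1})$, whose plumbing graph attaches $w_1$ with decoration $d_1-m_1$. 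For $d_1\ge m_1$ this decoration is $\ge 0$, and I would invoke the sharp form of the large--surgery result underlying Theorem \ref{th: l space simple vertex}(a) (whose proof follows \cite{GN1}) to conclude that $Y_{d_1}(K_{v_1})$ is an L--space.

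The main obstacle is exactly the sharpness of this final threshold: Theorem \ref{th: l space simple vertex}(a) as stated only yields an L--space for $d_1\gg 0$, which gives $(4)\Rightarrow(2)$ but not the optimal bound $d_1\ge m_1$ required by $(3)$. I expect this to be the delicate point, because for $d_1\ge m_1$ the decoration of $w_1$ is non--negative, the combined graph $\Gamma^{(2)}_{d_2}\cup w_1$ is no longer negative definite, and Theorem \ref{th:ratL} cannot be applied verbatim. The plan is to run the \cite{GN1}--type argument directly for the vertex--knot $K_{v_1}$ inside $Y$ (rather than for $L$ inside $S^3$), verifying that it produces precisely the threshold ``decoration $\ge 0$'', i.e. $d_1\ge m_1$. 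The determinantal identities of Lemma \ref{BraunN} are the natural tool for tracking how the relevant determinants, and hence the sharp bound, shift once the very negative vertex $w_2$ is inserted, as well as for handling the boundary case $d_1=m_1$, where $w_1$ becomes a $0$--framed leaf and calls for a short plumbing--calculus reduction.
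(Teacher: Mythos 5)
Your treatment of the easy arrows is fine and matches the paper: $(3)\Rightarrow(2)$ is trivial, $(2)\Rightarrow(1)$ is Theorem \ref{unknot}, and your expansion of $(1)\Leftrightarrow(4)$ via $\pi^*\mathfrak m=\mathcal O(-Z_{\min})$ and base--point--freeness is exactly the content of Remark \ref{rem:analsimple}. The problem is the remaining implication, where you candidly describe a ``plan'' rather than a proof: the sharp threshold $d_1\ge m_1$ is precisely the nontrivial content of the theorem, and your two--step scheme (first surger $L_2$ to get a rational negative definite $Y$, then surger $K_{v_1}$ inside $Y$) does not deliver it. Theorem \ref{th: l space simple vertex}(a) only gives $d_1\gg 0$; for $d_1\ge m_1$ the new vertex $w_1$ has decoration $d_1-m_1\ge 0$, the combined graph is not negative definite, and rationality of $\Gamma^{(2)}_{d_2}$ gives you nothing directly. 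Moreover, ``run the \cite{GN1}--type argument for $K_{v_1}$ inside $Y$'' cannot work at this level of generality: it is simply false that attaching a vertex of decoration $\ge 0$ to an arbitrary vertex of an arbitrary rational graph yields an L--space, so some specific feature of the situation (here: $L_2$ being the unknot, equivalently $v_2$ simple) must enter the argument for the sharp bound, and your sketch never says where or how.

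The paper proves $(1)\Rightarrow(3)$ by an argument on the combined two--vertex surgery graph rather than by iterated surgeries. After normalizing so that $E_{v_1}^2=-1$, for $d_1>m_1$ one uses plumbing calculus to replace the $w_1$--vertex (decoration $d_1-m_1>0$) by a chain of $(-2)$--vertices, obtaining a graph $\Gamma'$ representing $S^3_{d_1,d_2}(L)$; one then caps the chain with an extra $(-1)$--vertex to form $\Gamma'_{-1}$, which blows down to the plumbing graph of $S^3_{d_2}(L_2)=L(d_2,1)$ --- this is exactly where the unknot hypothesis is used --- hence $\Gamma'_{-1}$ is negative definite and rational for $d_2\ll 0$, and $\Gamma'$, being a subgraph, is rational as well (Corollary \ref{cor:rat}), so the surgery is an L--space by Theorem \ref{th:ratL}. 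The boundary case $d_1=m_1$ is handled by $0$--absorption (deleting the $0$--framed leaf together with $v_1$) and then applying Theorem \ref{th: l space simple vertex}(b) to $v_2$ in $\Gamma\setminus v_1$, which is legitimate since $v_2$ simple in $\Gamma$ implies $v_2$ simple in $\Gamma\setminus v_1$; your remark about the $0$--framed leaf is on the right track for that case only. As it stands, the main case $d_1>m_1$ of the key implication is missing from your argument.
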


\begin{proof} $(2)\Rightarrow(1)$ follows from Theorem \ref{unknot},  $(3)\Rightarrow(2)$
is evident,  $(4)\Leftrightarrow(1)$ follows from Remark \ref{rem:analsimple}. Hence we have to prove
$(1)\Rightarrow(3)$.
We proceed similarly to the proof of the main theorem in \cite{GN1}.
We can assume that the self--intersection of $v_1$ in $\Gamma$  is $-1$.
 Recall from \ref{ss:gm} that the graph of the surgery manifold is obtained from $\Gamma$
 by adding two additional vertices with framings $d_1-m_1$ and $d_2-m_2$ glued to the vertices $v_1$ and $v_2$.
Assume first that $d_1>m_1$.
 By plumbing calculus we can replace the first new vertex by the chain of $(-2)$--vertices. Let us call the resulting graph by $\Gamma'$.

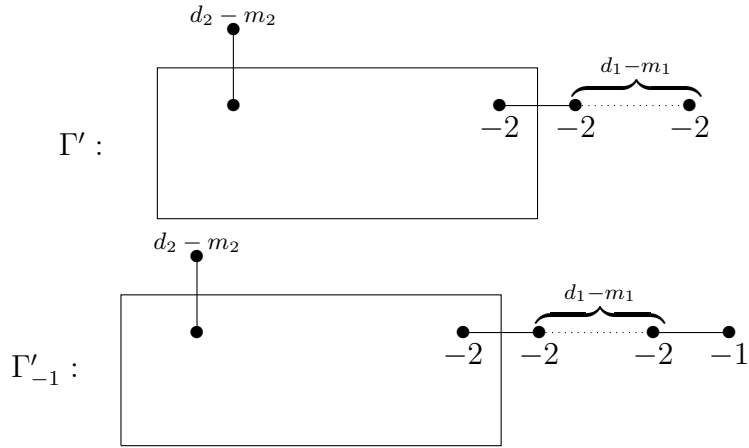
\begin{figure}[ht!]
\begin{tikzpicture}
\draw (0,0)--(0,2)--(5,2)--(5,0)--(0,0);
\draw (1,1.5) node {$\bullet$};
\draw (1,2.5) node {$\bullet$};
\draw (1,1.5)--(1,2.5);
\draw (4.5,1.5) node {$\bullet$};
\draw (5.5, 1.5) node {$\bullet$};
\draw (7, 1.5) node {$\bullet$};
\draw (4.5,1.5)--(5.5,1.5);
\draw [dotted] (5.5,1.5)--(7,1.5);
\draw (-1,1) node {$\Gamma':$};
\draw (4.5,1.2) node {$-2$};
\draw (1,2.7) node {\scriptsize{$d_2-m_2$}};
\draw (5.5,1.2) node {$-2$};
\draw (7,1.2) node {$-2$};
\draw (6.3,1.9)  node {$\overbrace{\qquad\qquad}^{d_1-m_1}$};
\end{tikzpicture}
\begin{tikzpicture}
\draw (0,0)--(0,2)--(5,2)--(5,0)--(0,0);
\draw (1,1.5) node {$\bullet$};
\draw (1,2.5) node {$\bullet$};
\draw (1,1.5)--(1,2.5);
\draw (-1,1) node {$\Gamma'_{-1}:$};
\draw (4.5,1.5) node {$\bullet$};
\draw (5.5, 1.5) node {$\bullet$};
\draw (7, 1.5) node {$\bullet$};
\draw (4.5,1.5)--(5.5,1.5);
\draw [dotted] (5.5,1.5)--(7,1.5);
\draw (4.5,1.2) node {$-2$};
\draw (1,2.7) node {\scriptsize{$d_2-m_2$}};
\draw (5.5,1.2) node {$-2$};
\draw (7,1.2) node {$-2$};
\draw (6.3,1.9)  node {$\overbrace{\qquad\qquad}^{d_1-m_1}$};
\draw (8,1.5) node {$\bullet$};
\draw (8,1.2) node {$-1$};
\draw (7,1.5)--(8,1.5);
\end{tikzpicture}
\label{fig: unknotted}
\caption{The graphs $\Gamma$ (top) and $\Gamma'$ (bottom)}
\end{figure}

Let us add an extra $(-1)$--vertex to the end of this chain, and call the graph with this extra vertex $\Gamma'_{-1}$. By consecutively blowing down  this $(-1)$--vertex and  the chain of $(-2)$-vertices, we obtain a graph representing $S^3_{d_2}(L_2)=L(d_2,1)$. For $d_2\ll 0$ we conclude that
$\Gamma'_{-1}$ is negative definite and rational.
But $\Gamma'$ is a subgraph of $\Gamma'_{-1}$, so it is rational too.

If $d_1=m_1$ then by plumbing calculus one can delete the zero--vertex and its support vertex $v_1$,
hence we need to show that the surgery along $v_2$ of $\Gamma\setminus v_1$ is rational.
But $v_2$ is simple in $\Gamma$ (by $(4)\Leftrightarrow(1)$), hence it is simple in $\Gamma\setminus v_1$ too.
Hence we can conclude by Theorem \ref{th: l space simple vertex}.
\end{proof}

\begin{corollary}
If $L$  is an algebraic link with two components and (at least) one of the components is unknot
then $\ls(L)$ is not bounded from below.
\end{corollary}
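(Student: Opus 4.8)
The plan is to read off this corollary as an immediate consequence of Theorem \ref{th:corner}. First I would use the symmetry between the two components: if it is $L_1$ rather than $L_2$ that is the unknot, I simply relabel so that $L_2$ denotes the unknotted component. This relabeling reflects $\ls(L)$ across the diagonal and hence does not affect whether the set is bounded from below, so I may assume without loss of generality that $(C_2,0)$ is smooth, i.e. that condition (1) of Theorem \ref{th:corner} holds.

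Next I would invoke the implication $(1)\Rightarrow(2)$ of Theorem \ref{th:corner}, which guarantees that $(d_1,d_2)\in\ls(L)$ for all $d_1\gg 0$ and all $d_2\ll 0$. Concretely, there exist constants $N_1,N_2$ so that the entire corner region $\{(d_1,d_2):d_1\ge N_1,\ d_2\le N_2\}$ is contained in $\ls(L)$. Fixing any single value $d_1\ge N_1$ and letting $d_2$ range over $(-\infty,N_2]$ already exhibits infinitely many points of $\ls(L)$ whose second coordinate is arbitrarily negative.

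Finally I would translate this into the statement about boundedness from below. Recall that a subset $S\subset\BZ^2$ is bounded from below, with respect to the partial order $\preceq$, precisely when it admits a lower bound, equivalently when each of its two coordinate projections is bounded below in $\BZ$. Since the second coordinates of the points produced above are unbounded below, no such lower bound can exist, and therefore $\ls(L)$ is not bounded from below.

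I do not anticipate a genuine obstacle here: all of the real content is already packaged into Theorem \ref{th:corner}, in particular into the resolution-graph argument establishing the chain $(1)\Rightarrow(3)\Rightarrow(2)$ via simplicity of the supporting vertex $v_2$ and the rationality of the surgered graph. The only points requiring care are the symmetric relabeling of the two components and making the notion of `bounded from below' precise, so that the presence of the corner region $\{d_1\gg 0,\ d_2\ll 0\}\subset\ls(L)$ is correctly recognized as a witness to unboundedness.
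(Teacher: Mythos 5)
Your proposal is correct and is exactly the argument the paper intends: the corollary is stated as an immediate consequence of Theorem \ref{th:corner}, using the implication $(1)\Rightarrow(2)$ (after relabeling the components if necessary so that the unknotted one is $L_2$) to produce points of $\ls(L)$ with arbitrarily negative second coordinate. The relabeling step and the unpacking of ``bounded from below'' are handled correctly, so there is nothing to add.
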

\begin{remark}
By the classification of algebraic knots
(or by Proposition \ref{alex 1 component}) $L_2$ is unknot if and only if its Alexander polynomial
$\Delta_2(t)=1$. Hence, by formula of Torres \cite{Torres}, this happens exactly when
$\Delta(1,t)=(1-t^l)/(1-t)$, where $l$ is the linking number.
\end{remark}

\section{When is $\ls(L)$ bounded from below?}\label{s:PROOFS}

\subsection{}
In this section we provide several characterizations of the boundedness from below of
$\ls(L)$ for algebraic links (in particular, we prove Theorem \ref{th:alg alex}).

Let $\Gamma$ be the dual graph of the minimal good embedded resolution of $C$ \cite{EN}.
If the strict transforms of $C_1$ and $C_2$ intersect the same irreducible exceptional component, say $E_v$,
then we call $C_1$ and $C_2$
(and $L_1$ and $L_2$)  {\em parallel}.
Otherwise the strict transforms of $C_1$ and $C_2$  intersect transversally two different components,
let their index be $v_1$ and $v_2$.

\begin{theorem}
\label{th:alg alex2}
For  a 2--component  algebraic link $L$  the following facts are equivalent:
\begin{enumerate}
\item the intersections of $\ls(L)$ with lines $\{m_1\}\times \BZ$ and $\BZ\times \{m_2\}$ are both bounded from below;
\item $\ls(L)$ is bounded from below;
\item there exists a very good point $v\in\BZ^2$ for $L$;  
\item  $\Delta(L)$ is not of ordered type;
\item $L$ is not parallel, and the vertex $v_1$ is not simple in the graph $\Gamma\setminus v_2$, and
 $v_2$ is not simple in the graph $\Gamma\setminus v_1$
(in the sense of Definition \ref{def simple}).
\end{enumerate}
\end{theorem}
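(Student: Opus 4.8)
The plan is to establish the cyclic chain of implications $(4)\Rightarrow(3)\Rightarrow(2)\Rightarrow(1)\Rightarrow(5)\Rightarrow(4)$. The first three links are already available: $(4)\Rightarrow(3)$ is exactly Lemma \ref{alg very good}; $(3)\Rightarrow(2)$ is Theorem \ref{th:bounded}; and $(2)\Rightarrow(1)$ is immediate, since a subset of $\BZ^2$ bounded from below in the partial order $\preceq$ meets each line $\{m_1\}\times\BZ$ and $\BZ\times\{m_2\}$ in a set bounded from below. Thus all the weight falls on the two remaining arrows, $(1)\Rightarrow(5)$ and $(5)\Rightarrow(4)$, which is where the graph theory of Section \ref{ss:graph mfd} and the determinantal estimates of Subsection \ref{ss:detprop} enter.

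For $(1)\Rightarrow(5)$ I would argue contrapositively and first dispose of the parallel case. If $L$ is parallel then $v_1=v_2$, so $m_{uv_1}=m_{uv_2}$ for every $u$ and, by \eqref{eq:EN}, $\Delta$ is a polynomial in $t_1t_2$; hence $\Supp(\Delta)$ lies on a line of slope one and is totally ordered. A direct plumbing computation (in the spirit of Section \ref{s:EXAMPLES}) then shows that the $d_1=m_1$ section of $\ls(L)$ remains an L--space for all $d_2\ll 0$, so $(1)$ fails. This reduces $(1)\Rightarrow(5)$ to the non--parallel situation, where $v_1\neq v_2$. Here, setting $d_1=m_1$ turns the vertex glued to $v_1$ into a pendant $0$--vertex; as in the proof of Theorem \ref{th:corner} one deletes it together with $v_1$ and identifies $S^3_{m_1,d_2}(L)$ with a surgery $Y'_{d}(K_{v_2})$ on the graph manifold $Y'$ of $\Gamma\setminus v_1$, whose framing tends to $-\infty$ as $d_2\to-\infty$. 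Since $\Gamma\setminus v_1$ is a subgraph of the rational graph $\Gamma$ it is itself rational (Corollary \ref{cor:rat}), so $Y'$ is an L--space and Theorem \ref{th: l space simple vertex}(b) applies, giving: the section $\{m_1\}\times\BZ$ is bounded from below if and only if $v_2$ is not simple in $\Gamma\setminus v_1$. The symmetric statement holds for $\BZ\times\{m_2\}$, so $(1)$ forces non--parallelism together with both non--simplicity conditions, i.e. $(5)$.

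For $(5)\Rightarrow(4)$ the idea is to manufacture two \emph{incomparable} points of $\Supp(\Delta)$ directly from Lemma \ref{lem: inequalities for z and w}. One decomposes $\Gamma$ (arrowheads removed, so $\det\Gamma=1$) along the path $\overline{v_1v_2}$ as in Figure \ref{fig:BraunN}, with $u,v$ the arrowhead supports arranged as $(-1)$--vertices. The content of $(5)$ is precisely what is needed to verify the side conditions \eqref{eq:3cases}: non--simplicity of $v_1$ in $\Gamma\setminus v_2$ and of $v_2$ in $\Gamma\setminus v_1$ forces the pieces $G',G''$ to be nontrivial and connected. The lemma then yields positive integers $z,w$ with $a'/p'>z/w>a/p$, and by the sign analysis behind \eqref{eq: cone plus quadrant} the points $(zp,zp')$ and $(wa,wa')$ are incomparable (the former has the larger first and the smaller second coordinate). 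It remains to recognize these as genuine elements of $\Supp(\Delta)$, using the splice--diagram form \eqref{eq:EN} of $\Delta$ together with the semigroup description of Subsection \ref{ss:algfacts} and Lemma \ref{alex on a line} to realize the prescribed coordinates inside the support. Incomparable support points mean $\Delta$ is not of ordered type, which is $(4)$.

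I expect the main obstacle to be exactly this last implication $(5)\Rightarrow(4)$: both the bookkeeping that converts ``$v_i$ not simple in $\Gamma\setminus v_j$'' into the hypotheses of Lemma \ref{lem: inequalities for z and w}, and the final identification of the output points $(zp,zp')$, $(wa,wa')$ with actual elements of $\Supp(\Delta)$, are delicate and rely on the full determinantal package of Subsection \ref{ss:detprop}. A secondary delicate point is the parallel special case inside $(1)\Rightarrow(5)$, where the unboundedness of the $d_1=m_1$ section has to be checked by hand rather than read off from Theorem \ref{th: l space simple vertex}.
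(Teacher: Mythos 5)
Your global architecture coincides with the paper's: the cycle $(4)\Rightarrow(3)\Rightarrow(2)\Rightarrow(1)\Rightarrow(5)\Rightarrow(4)$, with the first three arrows delegated to Lemma \ref{alg very good}, Theorem \ref{th:bounded} and triviality, and with $(1)\Rightarrow(5)$ argued contrapositively exactly as in Lemma \ref{th:simple to unbounded} (parallel case by $0$--splitting plus \eqref{eq:EN}, non--parallel case by reducing the $\{m_1\}\times\BZ$ section to a surgery on the graph manifold of $\Gamma\setminus v_1$ and invoking Theorem \ref{th: l space simple vertex}(b)). All of that is sound. The genuine gap is in $(5)\Rightarrow(4)$, and it is not merely a matter of unfinished bookkeeping: your sketch rests on the assumption that both arrowhead supports can be ``arranged as $(-1)$--vertices'' in the decomposition of Figure \ref{fig:BraunN}. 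This is false in general. In the minimal good embedded resolution one component, say $C_1$, may be resolved automatically by the resolution of $C_2$, in which case $v_1$ is not a $(-1)$--vertex and no renormalization of the splice data puts it on one without destroying the determinantal identities you want to use. The paper is forced to split $(5)\Rightarrow(4)$ into two genuinely different arguments: Theorem \ref{prop:splice1} for the case of two $(-1)$--vertices (where condition $(5)$ is \emph{automatic}, so it is not ``precisely what is needed'' to feed Lemma \ref{lem: inequalities for z and w} -- that lemma only enters in the degenerate subcases $r=n$ and/or $s=n$, where the $(-1)$--condition must be imposed as an extra hypothesis), and Theorem \ref{th:ordered to simple} for the remaining case, which is proved \emph{contrapositively}: if $\Delta$ is of ordered type then $v_1$ is simple in $\Gamma\setminus v_2$, via the Lipman cone, the inclusion $\Supp(\Delta)\subset pr_{12}C(\Gamma)$, and the explicit construction of a cycle in $C(\Gamma')$ with $E_{v_1}$--multiplicity $1$. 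Your proposal contains no counterpart to this second half.

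A secondary but real defect: the points $(zp,zp')$ and $(wa,wa')$ output by Lemma \ref{lem: inequalities for z and w} are \emph{not} in general elements of $\Supp(\Delta)$, and there is no direct route from \eqref{eq:EN} plus Lemma \ref{alex on a line} that would place them there. The incomparable support points actually exhibited in the paper are quantities such as $(a_{n+1},a_np_np_{n+1}')$ versus $(p_na_np_{n+1},a_{n+1}')$, or $zT$ versus $wZ$, and certifying that they survive with nonzero coefficient in $\Delta$ requires the whole ``extension factor'' factorization $\Delta=P_0\cdot P_{n+2}\cdots P_r'\cdot\Delta'$ together with Corollary \ref{cor:zeta with a factor} and the edge inequalities $a_{n+1}>a_np_np_{n+1}$, etc. Since all coefficients of $\Delta$ are $0$ or $1$, massive cancellation between factors is a priori possible, and ruling it out is the technical heart of Theorem \ref{prop:splice1}; your sketch treats it as a routine identification.
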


\subsection{}\label{ss:proofofalg}
First we outline its proof. 
Part $(2)\Rightarrow (1)$ is clear,  $(3)\Rightarrow (2)$ follows from Theorem \ref{th:bounded},
$(4)\Rightarrow (3)$ is proven in Lemma \ref{alg very good}. Part
$(1)\Rightarrow (5)$ will follow from the next Lemma.

\begin{lemma}
\label{th:simple to unbounded}
a) If\, $C_1$ and $C_2$ are parallel then  $\Delta $ is of ordered type, and $m_1=m_2$, and
$((\{m_1\}\times \BZ)\cup (\BZ\times \{m_2\}))\setminus (m_1,m_2)\subset \ls(L)$.

b) If $v_2$ is a simple vertex  for $\Gamma\setminus v_1$ then $\ls(L)\cap (\{m_1\}\times \BZ)$ is unbounded from below.
\end{lemma}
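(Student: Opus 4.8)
The plan is to prove the two parts separately, using a handle-slide/connected-sum picture for (a) and the plumbing-calculus reduction of Theorem \ref{th:corner} for (b).

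For part (a), being parallel means both arrowheads sit on a single vertex $v=v_1=v_2$, so $m_1=\det(\Gamma\setminus v)=m_2$ is immediate. For the ordered type I would feed $v_1=v_2=v$ into the Eisenbud--Neumann product \eqref{eq:EN}: since $m_{uv_1}=m_{uv_2}$ for every $u$, it collapses to $\Delta(t_1,t_2)=\prod_u(1-(t_1t_2)^{m_{uv}})^{\delta_u-2}$, a polynomial in $t_1t_2$. Hence $\Supp(\Delta)$ lies on the diagonal $\{(k,k)\}$, which is totally ordered by $\preceq$, so $\Delta$ is of ordered type. For the surgery statement I would use that parallel components are, up to isotopy, a knot $K=L_1$ together with its $l$-framed pushoff $L_2$, where $l=\mathrm{lk}(L_1,L_2)=m_{v_1v_2}=\det(\Gamma\setminus v)=m_1=m_2$. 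Sliding $L_2$ over $L_1$ when $d_1=m_1=l$ turns $L_2$ into a split unknot of framing $d_2-l$ and unlinks it, giving
$$ S^3_{m_1,d_2}(L)\cong S^3_{m_1}(L_1)\,\#\,L(d_2-m_1,1)\qquad(d_2\neq m_2). $$
Here $S^3_{m_1}(L_1)$ is the reducible surgery on the algebraic (hence L--space) knot $L_1$; since its slope satisfies $m_1\ge 2g(L_1)-1$, Theorem \ref{th: ls knots} shows $S^3_{m_1}(L_1)$ is an L--space, and a connected sum of L--spaces is an L--space. Thus $(m_1,d_2)\in\ls(L)$ for all $d_2\neq m_2$; the symmetric slide of $L_1$ over $L_2$ gives $(d_1,m_2)\in\ls(L)$ for $d_1\neq m_1$, which is the claimed inclusion. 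The excluded point $(m_1,m_2)=(l,l)$ has $\det\Lambda=0$, producing the $S^1\times S^2$ summand, so it is legitimately omitted.

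For part (b) I follow the reduction already used for $(1)\Rightarrow(3)$ in Theorem \ref{th:corner}. After arranging $v_1^2=-1$ (valence two: one exceptional neighbour plus the arrowhead), the surgery $(m_1,d_2)$ attaches a $0$--framed leaf $w_1$ to $v_1$; blowing down $v_1$ and then the resulting $(+1)$--leaf deletes both $w_1$ and $v_1$ and restores the neighbour's decoration, so $S^3_{m_1,d_2}(L)$ is represented by $\Gamma\setminus v_1$ with an extra leaf of framing $d_2-m_2$ on $v_2$, that is, by the graph-manifold surgery $(\Gamma\setminus v_1)_{d'}(K_{v_2})$ with $d'\ll 0$ as $d_2\ll 0$. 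Since $\Gamma\setminus v_1$ is a subgraph of the rational graph $\Gamma$ (which represents $S^3$), it is negative definite and rational by Corollary \ref{cor:rat}; if it is disconnected, the extra components are rational graph manifolds contributing L--space connected summands. Applying Theorem \ref{th: l space simple vertex}(b) to the component containing $v_2$, the manifold is an L--space for all $d'\ll 0$ precisely because $v_2$ is simple in $\Gamma\setminus v_1$ by hypothesis. Hence $(m_1,d_2)\in\ls(L)$ for all $d_2\ll 0$, so $\ls(L)\cap(\{m_1\}\times\BZ)$ is unbounded from below.

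The main technical obstacle is, in both parts, the careful treatment of the $d_1=m_1$ (that is, the $0$--framed) surgery. In (a) one must identify the parallel link with a framed pushoff and confirm that the reducible surgery $S^3_{m_1}(L_1)$ is itself an L--space; the point to pin down is the inequality $m_1\ge 2g(L_1)-1$ for the reducible (cabling) slope of a nontrivial algebraic knot. In (b) the delicate step is the plumbing-calculus absorption of the $0$--framed vertex so that one genuinely lands on the negative definite rational graph $\Gamma\setminus v_1$ and on the hypotheses of Theorem \ref{th: l space simple vertex}; this is exactly the manoeuvre carried out in the proof of Theorem \ref{th:corner}, with the simpleness of $v_2$ entering only at the final step.
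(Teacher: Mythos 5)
Your part (a) reaches the paper's conclusions but by a genuinely different route for the surgery statement. The paper stays inside plumbing calculus: it attaches the two surgery vertices (framings $0$ and $d_2-m_2$) to the common supporting vertex $v$ and applies Neumann's $0$--chain splitting, which deletes the $0$--framed leaf together with $v$ and leaves the rational graph $\Gamma\setminus v$ disjoint from a single vertex weighted $d_2-m_2$; rationality of subgraphs (Corollary \ref{cor:rat}) then gives the L--space property of the first summand. Your handle--slide picture, identifying the parallel pair with a knot and its $l$--framed pushoff and producing $S^3_{m_1}(L_1)\,\#\,L(d_2-m_1,1)$, is a correct alternative and topologically more transparent, but it leans on two facts you only gesture at: that two branches meeting the same exceptional curve really form an $l$--framed pushoff pair (both are Seifert fibers of the same JSJ piece, hence cobound a vertical annulus inducing the framing $l=m_{vv}=\det(\Gamma\setminus v)$), and that the reducible slope of a nontrivial algebraic knot satisfies $m_1\ge 2g(L_1)-1$ (true, e.g.\ because $m_1=p_sa_s\ge\mu=2g$, or because $S^3_{m_1}(L_1)$ is a connected sum of lens spaces with an L--space surgery on the companion; note also that Theorem \ref{th: ls knots} does not apply verbatim when $L_1$ is the unknot, though then $S^3_l(L_1)$ is a lens space anyway). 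The ordered--type claim and $m_1=m_2$ via \eqref{eq:EN} coincide with the paper's argument.

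In part (b) there is a genuine gap in the reduction step. You ``arrange $v_1^2=-1$ with valence two,'' but in the setting where the lemma is applied ($\Gamma$ the minimal good embedded resolution, $v_1$ an arbitrary vertex of $\Gamma\setminus v_2$, cf.\ Theorem \ref{th:ordered to simple}), $v_1$ need be neither a $(-1)$--vertex nor a leaf, and arranging this by further blow--ups is not harmless: each blow--up at the arrowhead increases $\det(\Gamma\setminus v_1)$, so the surgery leaf corresponding to the fixed coefficient $d_1=m_1$ is no longer $0$--framed (blowing the new vertex back down simply returns you to the original configuration), and it also replaces $\Gamma\setminus v_1$ by a different graph, in which the hypothesis that $v_2$ is simple is not the one you assumed. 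The correct tool --- and the one the paper uses --- is the $0$--chain splitting move itself: a genus--zero, valence--one, $0$--framed vertex attached to any vertex $v_1$ of a tree may be deleted together with $v_1$, the resulting manifold being the connected sum of the graph manifolds of the remaining components, with no hypothesis on the framing or valence of $v_1$. This lands you directly on $\Gamma\setminus v_1$ with the $d_2-m_2$ leaf on $v_2$, after which your appeal to Corollary \ref{cor:rat} for the components away from $v_2$ and to Theorem \ref{th: l space simple vertex}(b) for the component containing $v_2$ is exactly the paper's argument.
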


In fact, the previous implications together with Lemma \ref{th:simple to unbounded}
finish completely the case of parallel components.
Finally, it remains to prove $(5)\Rightarrow (4)$ in the non--parallel case.


\begin{proof}[Proof of  Lemma \ref{th:simple to unbounded}] (a)
Consider the  surgery manifold $S^3_{m_1,d_2}(L)$. It is represented
by a graph $\Gamma$ with two additional vertices with decorations $0$ and $d_2-m_2$, connected to $v$.
By 0--splitting (cf. \cite{Neumann}) the first new vertex together with $v$ can be deleted, and we
remain with the rational graph $\Gamma\setminus v$ and another component consisting of
 a single vertex decorated by $d_2-m_2$, which is a lens space whenever $d_2\not=m_2$.
Furthermore, by \eqref{eq:EN},  ${\rm Supp}(\Delta)$ sits  an a line.

(b) Consider again $S^3_{m_1,d_2}(L)$. By 0--splitting (of the first new vertex)
this is equivalent with the surgery  of $\Gamma\setminus v_1$ along $v_2$.
But this is rational for $d_2\ll 0$ by Theorem \ref{th: l space simple vertex}.
\end{proof}
\subsection{} The remained implication $(5)\Rightarrow (4)$ (for non--parallel case) will be proved in two steps.
We need to prove that whenever $v_1\not= v_2$ and if $v_i$ is not simple in $\Gamma\setminus v_j$
($(i,j)=(1,2)$ or $(2,1)$) then $\Delta $ is not ordered.
First we consider that particular family of graphs when both $v_1$ and $v_2$ are
$(-1)$--vertices. In this case the assumption is satisfied. Indeed, the two $(-1)$--vertices cannot be
adjacent (since $\Gamma$ is negative definite), hence $v_1$ has at least two adjacent vertices
in $\Gamma\setminus v_2$. Since any multiplicity of the minimal cycle is at least one, a $(-1)$--vertex with
at least two neighbors cannot have multiplicity one in the minimal cycle (cannot be in the Lipman cone).

The first step of $(5)\Rightarrow (4)$ is the following.

\begin{theorem} \label{prop:splice1}
Assume that the two components of $L$ are not parallel, and both arrowheads are supported by $(-1)$--vertices.
Then $\Delta$ is not of ordered type.
\end{theorem}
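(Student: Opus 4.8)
The plan is to translate the non-orderedness of $\Delta$ into the existence of two non-comparable points in $\Supp(\Delta)$, and to produce these points directly from the splice-diagram (equivalently, determinantal) data of $\Gamma$. First I would pass to the splice diagram of $\Gamma$ and locate the two $(-1)$--vertices $v_1, v_2$ supporting the arrowheads. Since the components are not parallel, $v_1 \neq v_2$; let $\overline{v_1v_2}$ be the geodesic connecting them, and decompose $\Gamma$ along this path into the configuration of Figure \ref{fig:BraunN}, with $v_1$ playing the role of $u$ and $v_2$ the role of $v$. The key point is that for an algebraic link $\Supp(\Delta)$ consists exactly of the semigroup elements $v$ with $v^*-\mathbf{1} \in S_C$ (fact (5) of \ref{ss:algfacts}), so to show $\Delta$ is not of ordered type it suffices to exhibit two elements $u',v' \in \Supp(\Delta)$ that are incomparable in $\preceq$.

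The central computation is to read off from \eqref{eq:EN} the two "extremal" monomials of $\Delta$ that come from the endpoints of the splice diagram near $v_1$ and near $v_2$. Concretely, the multiplicities $(m_{uv_1}, m_{uv_2})$ attached to a vertex $u$ give the exponent vector $t_1^{m_{uv_1}} t_2^{m_{uv_2}}$, and Lemma \ref{BraunN}(b) identifies these multiplicities with subgraph determinants. I would invoke Lemma \ref{lem: inequalities for z and w}, whose hypotheses are tailored to exactly this situation: with $\det(\widetilde G) = \det(\Gamma) = 1$ and (after possibly absorbing the two--valent vertices) $g=1$, the lemma produces positive integers $z,w$ with $a'/p' > z/w > a/p$, i.e. the two lattice vectors $(zp, zp')$ and $(wa, wa')$ are $\preceq$--incomparable. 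The refinements \eqref{eq:3cases}(a),(b) are available precisely because both $v_1$ and $v_2$ are $(-1)$--vertices (so $E_{v_1}^2 = E_{v_2}^2 = -1$), and they give the sharpened bounds $z < a$ and $w < p'$ needed to keep the resulting exponent vectors inside the correct range of $\Supp(\Delta)$.

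The heart of the argument, and the step I expect to be the main obstacle, is verifying that the two incomparable lattice vectors produced by Lemma \ref{lem: inequalities for z and w} actually land in $\Supp(\Delta)$ rather than merely in $(\BZ_{\geq 0})^2$. This requires more than incomparability: I must check that each candidate exponent vector $w$ satisfies both $w \in S_C$ and $w^* - \mathbf{1} \in S_C$, using the characterization in fact (5). I would handle this by matching the vectors against the product formula \eqref{eq:EN}, showing that the relevant factors $(1 - t_1^{m_{uv_1}} t_2^{m_{uv_2}})^{\delta_u - 2}$ contribute these monomials with coefficient $1$ and that no cancellation occurs — which is where the positivity $m_{vw} \geq 0$ and the normalization $\det(\Gamma) = 1$ enter. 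The symmetry $v \in \Supp(\Delta) \Leftrightarrow c - \mathbf{1} - v \in \Supp(\Delta)$ can be used to reduce the two semigroup membership checks to one, so the real work is a single careful determinantal bookkeeping argument.

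If the direct product-formula manipulation proves too delicate, the fallback is to argue semigroup-theoretically: one shows that the two endpoints contribute distinct "staircase corners" to the semigroup $S_C$, and then appeals to Lemma \ref{alex on a line} (and the Torres formula) to guarantee that on each of the two relevant vertical/horizontal lines there is a genuine support point, after which the incomparability inequality \eqref{eq: cone plus quadrant} separates them. Either way, once two incomparable points of $\Supp(\Delta)$ are in hand, the definition of ordered type is violated and the theorem follows.
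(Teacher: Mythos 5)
Your overall strategy -- exhibit two $\preceq$--incomparable points in $\Supp(\Delta)$ by reading off splice--diagram determinants -- is the right one, and Lemma \ref{lem: inequalities for z and w} is indeed the tool the paper uses. But there are two genuine gaps. First, you cannot apply Lemma \ref{lem: inequalities for z and w} to the decomposition of $\Gamma$ along the whole geodesic $\overline{v_1v_2}$: the lemma requires $G-\overline{uv}=\emptyset$ (so $g=1$), and when the path from $v_1$ to $v_2$ passes through intermediate nodes, each such node carries a leg, so $G-\overline{uv}\neq\emptyset$ and $g=\det(G-\overline{uv})>1$ in general. This is not a matter of ``absorbing two--valent vertices''; the obstruction is the legs at the interior nodes. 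The paper only invokes the lemma \emph{locally}, for a pair of adjacent nodes in the degenerate configurations (II.Deg.a)--(II.Deg.c) where $G$ is a single edge; the generic families (I) and (II) (where $s>n$ and $r>n$) are handled by an entirely different, direct argument using the ``long hands'' of the diagram and the edge inequalities $a_{i+1}>a_ip_ip_{i+1}$, producing incomparable pairs such as $(a_{n+1},a_np_np_{n+1}')$ and $(p_na_np_{n+1},a_{n+1}')$ that are \emph{not} of the form $(zp,zp'),(wa,wa')$ supplied by the lemma. So a case analysis over the classification of two--component splice diagrams is unavoidable, and your single global application of the lemma does not cover it.

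Second, the step you yourself flag as the main obstacle -- showing that the two incomparable lattice vectors actually lie in $\Supp(\Delta)$ -- is where essentially all of the work resides, and neither of your proposed routes closes it. The paper does this by factoring $\Delta=P_1\cdots P_N\cdot\Delta'$ into \emph{extension factors} with nonnegative coefficients times an explicit core polynomial, so that any monomial of $P_1\cdots P_N$ plants a shifted copy of $\Supp(\Delta')$ inside $\Supp(\Delta)$; one then needs Corollary \ref{cor:zeta with a factor} together with a chain of inequalities (e.g. $\mu(g)<a_{n-1}p_{n-1}<a_n/p_n$, and the sharpened bounds $z<a$, $w<p'$ from \eqref{eq:3cases}) to certify that specific multiples $zT$, $wZ$, $a_n'T$ appear with coefficient $1$. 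A vague ``no cancellation occurs in \eqref{eq:EN}'' will not do: the product \eqref{eq:EN} has factors with negative exponents ($\delta_u-2=-1$ at leaves), and cancellation is exactly what must be controlled. Your fallback via Lemma \ref{alex on a line} also cannot work on its own, since that lemma guarantees a support point on a given vertical line but gives no control on its second coordinate, hence no incomparability.
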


\begin{proof}[Proof of Theorem \ref{prop:splice1}]

It is convenient to use the following terminology, which helps to test
Alexander polynomials of non--ordered type. A polynomial $\Delta\in \BZ[t_1,t_2]$ is an extension of
$\Delta'\in\BZ[t_1,t_2]$, if there exist polynomials $P_1,\ldots,P_N\in\BZ[t_1,t_2]$,
such that $\Delta=P_1\cdots P_N\cdot \Delta'$,  and all the non--zero coefficients of all the $P_i$'s
and $\Delta'$  are positive. We call the polynomials $P_i$ {\em extension factors}. Since any coefficients of
$\Delta$ is 0 or 1, any non--zero monomial of $P_1\cdots P_N$ gives a shifted copy
  of ${\rm Supp}(\Delta')$ is ${\rm Supp}(\Delta)$.
  In particular, if $\Delta'$ is not of ordered type,
  then $\Delta$ necessarily is not of ordered type as well.

We need to discuss two families of splice diagrams following \cite[App. to Ch. I]{EN}.
Recall (cf. \cite{EN}) that $m_{uv}$ (needed in the formula of $\Delta$)
reads from the diagram as the product of decorations along but
not on the path connecting $v_i$ and $v_j$.

(I) \ The first one has the following form with $s> n$ and $r> n$
(these inequalities imply that the supporting vertices are automatically $(-1)$--vertices in $\Gamma$):

\begin{picture}(400,130)(20,-10)
\put(60,60){\circle*{4}}
\put(100,60){\circle*{4}}
\put(100,30){\circle*{4}}
\put(92,65){\makebox(0,0){$a_1$}}
\put(108,65){\makebox(0,0){$1$}}
\put(95,50){\makebox(0,0){$p_1$}}
\put(100,30){\line(0,1){30}}
\put(60,60){\line(1,0){55}}
\put(130,60){\makebox(0,0){$\cdots$}}

\put(160,60){\circle*{4}}
\put(160,30){\circle*{4}}
\put(160,30){\line(0,1){30}}
\put(152,65){\makebox(0,0){$a_n$}}
\put(168,70){\makebox(0,0){$1$}}
\put(168,50){\makebox(0,0){$1$}}
\put(153,50){\makebox(0,0){$p_n$}}
\put(160,60){\line(-1,0){20}}

\put(160,60){\line(2,1){50}}
\put(160,60){\line(2,-1){50}}

\put(210,85){\circle*{4}}
\put(210,35){\circle*{4}}
\put(210,85){\line(0,-1){30}}
\put(210,35){\line(0,-1){30}}
\put(210,55){\circle*{4}}
\put(210,5){\circle*{4}}
\put(195,85){\makebox(0,0){$a_{n+1}$}}
\put(195,35){\makebox(0,0){$a'_{n+1}$}}
\put(210,85){\line(1,0){80}}
\put(210,35){\line(1,0){80}}
\put(225,75){\makebox(0,0){$p_{n+1}$}}
\put(225,25){\makebox(0,0){$p'_{n+1}$}}
\put(220,91){\makebox(0,0){$1$}}
\put(220,41){\makebox(0,0){$1$}}
\put(310,85){\makebox(0,0){$\cdots$}}
\put(310,35){\makebox(0,0){$\cdots$}}

\put(350,85){\circle*{4}}
\put(350,35){\circle*{4}}
\put(350,55){\circle*{4}}
\put(350,5){\circle*{4}}
\put(350,85){\line(0,-1){30}}
\put(350,35){\line(0,-1){30}}
\put(325,85){\vector(1,0){70}}
\put(325,35){\vector(1,0){70}}
\put(360,91){\makebox(0,0){$1$}}
\put(360,41){\makebox(0,0){$1$}}
\put(340,91){\makebox(0,0){$a_s$}}
\put(340,41){\makebox(0,0){$a'_r$}}
\put(344,75){\makebox(0,0){$p_{s}$}}
\put(344,25){\makebox(0,0){$p'_{r}$}}

\put(270,85){\circle*{4}}
\put(270,35){\circle*{4}}
\put(270,55){\circle*{4}}
\put(270,5){\circle*{4}}
\put(270,85){\line(0,-1){30}}
\put(270,35){\line(0,-1){30}}

\put(280,91){\makebox(0,0){$1$}}
\put(280,41){\makebox(0,0){$1$}}
\put(255,91){\makebox(0,0){$a_{n+2}$}}
\put(255,41){\makebox(0,0){$a'_{n+2}$}}
\put(258,75){\makebox(0,0){$p_{n+2}$}}
\put(258,25){\makebox(0,0){$p'_{n+2}$}}

\put(45,20){\dashbox{3}(130,70)}
\put(240,52){\dashbox{3}(45,50)}
\put(240,0){\dashbox{3}(45,50)}
\put(330,52){\dashbox{3}(40,50)}
\put(330,0){\dashbox{3}(40,50)}
\end{picture}

We decompose the set of vertices in several disjoint subsets, accordingly $\Delta$ will be a product of polynomials. The contribution of the
vertices from the left dash--box is $P_0(t_1,t_2)=\widetilde{P}_0
(t_1^{p_{n+1}\cdots p_s}\cdot t_2^{p_{n+1}'\cdots p_r'})$, where
$$\widetilde{P}_0(t)=\frac{ (t^{a_1p_1\cdots p_n}-1)(t^{a_2p_2\cdots p_n}-1)\cdots (t^{a_np_n}-1)^2}
{ (t^{p_1\cdots p_n}-1)(t^{a_1p_2\cdots p_n}-1)\cdots (t^{a_n}-1)}.$$
 Note that $\widetilde{P}_0(t_1t_2)$ is the Alexander polynomial of the link (with parallel components),
 determined by the
 diagram in the dash--box (and its two arrows correspond  to the cutting edges).  Hence, by
 (\ref{eq:av}), all the non--zero coefficients of $\widetilde{P}$ are 1, and  $P_0$ can be an
 extension factor.

 The contributions from small dash--boxes are also extension factors. Indeed, the box containing
 the vertex with adjacent weight $ a_{n+2},\, p_{n+2}$ and 1 has the multiplicative contribution
 $P_{n+2}(t_1,t_2)=\widetilde{P}_{n+2}(t_1^{a_{n+2}p_{n+3}\cdots p_s}\cdot t_2^{p_{n+1}a_np_np_{n+1}'
\cdots p_r'})$, where $\widetilde{P}_{n+2}(t)=(t^{p_{n+2}}-1)/(t-1)$. We will denote these
extension factors by $P_{n+2},\ldots, P_s$ and $P_{n+2}',\ldots, P_r'$.

The contribution from the remaining  four vertices is
 $\Delta'(t_1,t_2)=\widetilde{\Delta}'
(t_1^{p_{n+2}\cdots p_s}, t_2^{p_{n+2}'\cdots p_r'})$,  where
$$\widetilde{\Delta}'(t_1,t_2)=\frac{(t_1^{p_{n+1}a_{n+1}} t_2^{p_{n+1}a_np_np_{n+1}'}-1)}
 {(t_1^{a_{n+1}} t_2^{a_np_np_{n+1}'}-1)}\cdot
 \frac{( t_1^{p_{n+1}'p_na_np_{n+1}}t_2^{a_{n+1}'p_{n+1}'}-1)}
 {(t_1^{p_na_np_{n+1}}t_2^{a_{n+1}'} -1)}.
$$
Obviously,  $\Delta'$ is not of ordered type if and only if
$\widetilde{\Delta}'$ is not of ordered type.
But $\widetilde{\Delta}'$ is not of ordered type since in its support one has the
two lattice points $(a_{n+1}, a_np_np_{n+1}')$ and $(p_na_np_{n+1}, a_{n+1}')$.
We recall (see \cite[page 51]{EN}) that for an irreducible component
 the $(p_i,a_i)_i$ splice diagram decorations
are related with the Newton pairs  by $a_{i+1}=q_{i+1}+p_ip_{i+1}a_i$.
Therefore,  $a_{n+1}-p_na_np_{n+1}=q_{n+1}>0$ and
$a_{n+1}'-a_np_np_{n+1}'=q_{n+1}'>0$. (These inequalities follow from Lemma \ref{BraunN}(a) as well.)

Since $\Delta=P_0\cdot P_{n+2}\cdots P_r\cdot P_{n+2}'\cdots P_r'\cdot \Delta'$, the polynomial $\Delta$ is not of ordered type. \\

(I.Deg) \ Let us show that the `degenerate cases'  of  family (I), when
$s=n$ and/or $r=n$ cannot occur. Indeed, $s=n$ and $r=n$ cannot happen, since this is exactly the parallel case.
If $s>n$ and $r=n$, then one of the supporting vertices (say $v_2$)
 is the $n$-th node, which is not the last node of  $C_1$.
This $v_2$ cannot be a $(-1)$--vertex (in fact, its decoration is $-1-\lceil p_{n+1}/q_{n+1}\rceil$, where
$(p_i,q_i)$ are the Newton pairs of $C_1$, cf. \cite{N2}).
Hence this case cannot occur as well. \\

(II) \ The second family  has the next  form, again with $s>n$ and $r> n$.
Like above, these inequalities guarantee that both supporting vertices are automatically $(-1)$--vertices.

\begin{picture}(400,130)(20,-10)
\put(60,85){\circle*{4}}
\put(100,85){\circle*{4}}
\put(100,55){\circle*{4}}
\put(92,90){\makebox(0,0){$a_1$}}
\put(108,90){\makebox(0,0){$1$}}
\put(95,75){\makebox(0,0){$p_1$}}
\put(100,55){\line(0,1){30}}
\put(60,85){\line(1,0){55}}
\put(130,85){\makebox(0,0){$\cdots$}}

\put(160,85){\circle*{4}}

\put(152,90){\makebox(0,0){$a_n$}}
\put(168,91){\makebox(0,0){$1$}}
\put(153,75){\makebox(0,0){$p_n$}}
\put(160,85){\line(-1,0){20}}

\put(160,85){\line(1,0){50}}
\put(160,85){\line(0,-1){50}}
\put(160,35){\circle*{4}}
\put(160,35){\line(1,0){50}}
\put(160,35){\line(0,-1){30}}
\put(160,5){\circle*{4}}
\put(152,42){\makebox(0,0){$a'_n$}}
\put(152,25){\makebox(0,0){$p'_n$}}
\put(168,41){\makebox(0,0){$1$}}

\put(210,85){\circle*{4}}
\put(210,35){\circle*{4}}
\put(210,85){\line(0,-1){30}}
\put(210,35){\line(0,-1){30}}
\put(210,55){\circle*{4}}
\put(210,5){\circle*{4}}
\put(195,91){\makebox(0,0){$a_{n+1}$}}
\put(197,41){\makebox(0,0){$a'_{n+1}$}}
\put(210,85){\line(1,0){80}}
\put(210,35){\line(1,0){80}}
\put(225,75){\makebox(0,0){$p_{n+1}$}}
\put(223,25){\makebox(0,0){$p'_{n+1}$}}
\put(220,91){\makebox(0,0){$1$}}
\put(220,41){\makebox(0,0){$1$}}
\put(310,85){\makebox(0,0){$\cdots$}}
\put(310,35){\makebox(0,0){$\cdots$}}

\put(350,85){\circle*{4}}
\put(350,35){\circle*{4}}
\put(350,55){\circle*{4}}
\put(350,5){\circle*{4}}
\put(350,85){\line(0,-1){30}}
\put(350,35){\line(0,-1){30}}
\put(325,85){\vector(1,0){70}}
\put(325,35){\vector(1,0){70}}
\put(360,91){\makebox(0,0){$1$}}
\put(360,41){\makebox(0,0){$1$}}
\put(340,91){\makebox(0,0){$a_s$}}
\put(340,41){\makebox(0,0){$a'_r$}}
\put(345,75){\makebox(0,0){$p_{s}$}}
\put(345,25){\makebox(0,0){$p'_{r}$}}

\put(270,85){\circle*{4}}
\put(270,35){\circle*{4}}
\put(270,55){\circle*{4}}
\put(270,5){\circle*{4}}
\put(270,85){\line(0,-1){30}}
\put(270,35){\line(0,-1){30}}

\put(280,91){\makebox(0,0){$1$}}
\put(280,41){\makebox(0,0){$1$}}
\put(255,91){\makebox(0,0){$a_{n+2}$}}
\put(255,41){\makebox(0,0){$a'_{n+2}$}}
\put(258,75){\makebox(0,0){$p_{n+2}$}}
\put(258,25){\makebox(0,0){$p'_{n+2}$}}

\put(45,52){\dashbox{3}(130,50)}
\put(240,52){\dashbox{3}(45,50)}
\put(240,0){\dashbox{3}(45,50)}
\put(330,52){\dashbox{3}(40,50)}
\put(330,0){\dashbox{3}(40,50)}
\end{picture}

The contributions from the dash--boxes are extension factors as above. The contributions from the remaining three pairs of vertices is
$\Delta'(t_1,t_2)=\widetilde{\Delta}'
(t_1^{p_{n+2}\cdots p_s}, t_2^{p_{n+2}'\cdots p_r'})$, where
$$
\widetilde{\Delta}'(t_1,t_2)=
\frac{\bt^{p_{n+1}X}-1}{\bt^X-1}\cdot
\frac{\bt^{p'_{n}Z}-1}{\bt^Z-1}\cdot
\frac{\bt^{p'_{n+1}Y}-1}{\bt^Y-1} \ \  \ \ (\bt^{(a_1,a_2)}=t_1^{a_1}t_2^{a_2})$$
$$X=(a_{n+1}, a_np_n'p_{n+1}'), \ \
Z= (a_np_{n+1}, a_n'p_{n+1}'), \ \
Y=(p_n'a_n p_{n+1}, a_{n+1}').$$
Note that by edge--inequalities of the splice diagram
$p_na_n'> a_np_n'$, $a_{n+1}>a_np_np_{n+1}$, and $a'_{n+1}>a'_np'_np'_{n+1}$.
Using these,  if $a_n\leq a_n'$ then one verifies that
$Y$ and $X+(p_n'- \min\{p_n,p_n'\})Z$ is an un--ordered pair in the support of
$\widetilde{\Delta}'$.

Assume next that
 $a_n>a_n'$. Then we will use the contribution from the extension
factor $P_0(t_1,t_2)$ from the left dash--box as well.
Let $g$ be an irreducible singularity with
splice decorations $(a_i,p_i)_{i=1}^{n-1}$,
let $\Delta_g(t)$ denote the Alexander polynomial of $g$.
One can check that $P_0(t_1,t_2)=\widetilde{P}_0(t_1^{p_{n+2}\cdots p_s}, t_2^{p_{n+2}'\cdots p_r'})$, where
$$
\widetilde{P}_0(t_1,t_2)=\Delta_g(t)\frac{t^{a_n}-1}{t-1}
$$
with the substitution
$t=t_1^{p_np_{n+1}}t_2^{p_n'p_{n+1}'}$. Set also
$T:=(p_np_{n+1}, p_n'p_{n+1}')$.
One shows  (by induction the number of Newton pairs) that
$\mu(g)<a_{n-1}p_{n-1}$.
On the other hand,
 $a_{n-1}p_{n-1}< a_n/p_n< a_n'/p_n'< a_n'<a_n$, hence by Corollary \ref{cor:zeta with a factor} $\widetilde{P}_0$
has all coefficients $0$ or $1$ and the coefficient of $t^{a'_{n}}$ equals 1.
All this shows that  $a_n'\cdot T$ is in the support of $\widetilde{P}_0
\cdot \widetilde{\Delta}'$.
Then, one verifies that $Y$ and $a_n'\cdot T$ are unordered pairs in the support
of $\widetilde{P}_0 \cdot \widetilde{\Delta}'$.\\

(II.Deg) \ Next we discuss three degenerate cases of  (II) corresponding to $r=n$ and/or $s=n$.  

This is the place of a very important warning. If $r>n$ then the supporting vertex of this component
(the $r$-th node) is automatically a $(-1)$--vertex, however this is not the case
when $r=n$. Thus, if $r=n$, we have to impose this extra condition.
 The point is that if we consider the `long graph case' with $r>n$ and we wish to make induction
by considering its shorter subgraph by deleting say one splice component, the shorter graph
might not have this extra condition (hence their Alexander polynomial might be ordered). In particular, inductions
of this type cannot be implemented. The non-ordered property of $\Delta$ for long graphs ($r>n$) is imposed by the
contribution from the
long hands, for short graph ($r=n$) by the extra assumption about the existence of $(-1)$--vertices.
This explain also why we didn't handle in cases (I)-(II)  the $(n+1)$-th nodes as extension factors (though they are,
but $\Delta'$ associated with a shorter graph might not have the non--ordered property without extra assumptions).

It is not easy to combine the decoration of the splice diagram (which gives  naturally $\Delta$)
with the extra assumption regarding the $(-1)$--vertices in $\Gamma$. This is exactly the role of
Lemma \ref{lem: inequalities for z and w}. The cases (a), (b) and (a)-(b)
of \eqref{eq:3cases} correspond to the three
degenerations  of (II). \\

(II.Deg.a) \ Assume that $s=n$ but $r> n$, and in the resolution graph $\Gamma$
the supporting vertex of $L_2$ is a $(-1)$--vertex (in the splice diagram this is the node with decorations
$a_n,\ p_n, \ 1$).

Set $T:=(p_n, p_n'p_{n+1}')$,
$Z= (a_n, a_n'p_{n+1}')$ and
$Y=(p_n'a_n , a_{n+1}')$.
Then $\Delta=P_0P_{n+2}'\cdots P_r'\Delta'$, where
$P_{n+1}',\ldots, P_r'$ are extension factors, $P_0=\widetilde{P}_0(t_1,t_2^{p_{n+2}'\cdots p_r'})$,
where $\widetilde{P}_0$ is obtained from $\Delta_g(t)(t^{a_n}-1)/(t-1)$ by substitution
$t =\bt ^T$, and  $\Delta'=\widetilde{\Delta}'(t_1,t_2^{p_{n+2}'\cdots p_r'})$, where
$$
\widetilde{\Delta}'(t_1,t_2)=
\frac{\bt^{p'_{n}Z}-1}{\bt^Z-1}\cdot
\frac{\bt^{p'_{n+1}Y}-1}{\bt^Y-1}.$$
By \eqref{eq:3cases}(a)   there exists a pair of positive integers $(z,w)$ such that
$a_n'/p_n'>z/w>a_n/p_n$ and  $z<a_n$.
Note that $z>a_nw/p_n\geq a_n/p_n$ and
(by edge inequality of the diagram) $a_n'>a_np_n'/p_n\geq a_n/p_n$. But $a_n/p_n>a_{n-1}p_{n-1}>\mu(g)$.
Hence $zT\in {\rm Supp}(\widetilde{P}\widetilde{\Delta}')$ (since $z<a_n$), and
$a_n'T\in {\rm Supp}(\widetilde{P}\widetilde{\Delta}')$ provided that $a_n'<a_n$.

If $a_n'<a_n$ then $a_n'T$ and $Y$ are unordered pairs in
${\rm Supp}(\widetilde{P}\widetilde{\Delta}')$,
if 
$w<p_n'$ then $zT$ and $wZ$ are unordered. Finally, if
$a_n\leq a_n'$ and $w\geq p_n'$ then $zT$ and $Y$ are unordered pairs. \\

(II.Deg.b) \ Assume that $s>n$ and $r=n$, and in the resolution graph $\Gamma$
the supporting vertex of $L_1$ is a $(-1)$--vertex (in the splice diagram this is the node with decorations
$a_n',\ p_n', \ 1$).

Though the graph is not symmetric to the case (II.Deg.a), the computations and the proof is.
We write only the generators (and all the other details are left to the reader).
$T=(p_np_{n+1},p_n')$, $X=(a_{n+1},a_np_n')$, $Z=(a_np_{n+1},a_n')$. \eqref{eq:3cases}(b) provides a
pair $(z,w)$ with $a_n'/p_n'>z/w>a_n/p_n$ and $w<p_n'$. Then if $z<a_n$ then
$zT$ and $wZ$ are unordered, if $p_n<p_n'$ then $p_nZ$ and $X$ are unordered, and if
$z\geq a_n$ and $p_n\geq p_n'$ then $wZ$ and $X$ are unordered pairs. \\

(II.Deg.c) Assume $s=r=n$ and assume also that in the resolution graph $\Gamma$
both arrowhead--supporting vertices have decoration $(-1)$.

Set $T=(p_n,p_n')$ and $Z=(a_n,a_n')$. Then $\Delta=P_0\Delta'$, where
$P_0=\Delta_g(\bt^T)(\bt^{a_nT}-1)/(\bt^T-1)$ and $\Delta'=(\bt^{p_n'Z}-1)/(\bt^Z-1)$.
By (a)-(b) of \eqref{eq:3cases} there exist $(z,w)$ with
$a_n'/p_n'>z/w>a_n/p_n$, $z<a_n$ and $w<p_n'$. Hence $zT$ and $wZ$ are unordered.
\end{proof}

\begin{example}
\label{transversal}
Consider the algebraic link $L$ corresponding to a singularity $C=C_1\cup C_2$.
Suppose that $C_1$ and $C_2$ are singular and the tangent lines to $C_1$ and to $C_2$ are distinct.
Then the Alexander polynomial is not of ordered type (and $\ls(L)$ is bounded below, cf. Theorem
\ref{th:alg alex2}).
Indeed, $C_1$ and $C_2$ are non--parallel and both are supported by   $(-1)$--vertices.
\end{example}

\subsection{The second step of the implication $(5)\Rightarrow(4)$.} \label{ss:2step}
In order to finish the proof of Theorem \ref{th:alg alex2} we need to finish the implication
$(5)\Rightarrow(4)$ (for non--parallel case).

Consider the minimal embedded resolution of $C$. In the non--parallel case
 $C_1$ and $C_2$ are supported on different vertices $v_1$ and $v_2$.
For the case when these are both  $(-1)$--vertices, Theorem \ref{prop:splice1} states that
$\Delta(t_1,t_2)$ is not of ordered type. Hence, we need to consider the case when only one component (say, $C_2$) is supported at the $(-1)$--vertex, and  $C_1$ is resolved automatically by the minimal resolution of $C_2$.


\begin{theorem}\label{th:ordered to simple}
Assume that the resolution graph $\Gamma$ is the minimal good resolution of $C_2$, and
the arrowhead of $C_1$ is glued to some arbitrarily chosen vertex of $\Gamma\setminus v_2$.
If $\Delta(t_1,t_2)$ is of ordered type then $v_1$ is a simple vertex in $\Gamma\setminus v_2$.
\end{theorem}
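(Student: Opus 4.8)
The plan is to prove the contrapositive of Theorem \ref{th:ordered to simple}: assuming that $v_1$ is \emph{not} simple in $\Gamma\setminus v_2$, I will produce two incomparable lattice points in $\Supp(\Delta)$, which by definition means that $\Delta$ is not of ordered type. By Definition \ref{def simple}, non--simplicity says that the $E_{v_1}$--coefficient of $Z_{\min}(\Gamma\setminus v_2)$ is at least $2$. The first step is to convert this statement about an abstract minimal cycle into concrete information about the position of the vertex $v_1$ in the resolution of $C_2$.

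For this I would use the \emph{general fact} isolated in the proof of Theorem \ref{th: l space simple vertex}. Since $\Gamma$ represents $S^3$ it is rational, hence so is its subgraph $\Gamma\setminus v_2$ (Corollary \ref{cor:rat}); applying the general fact to the edge joining $v_2$ to its neighbor shows that every vertex adjacent to $v_2$ is simple in $\Gamma\setminus v_2$. Thus non--simplicity forces $v_1$ to be an interior node of $\Gamma$, separated from $v_2$, whose multiplicity stems from its own branching rather than from the arm carrying $v_2$. Moreover, combining Remark \ref{rem:analsimple} with the fact that passing to a subgraph does not increase $Z_{\min}$ (delete the leaf $v_2$), one sees that $C_1$ cannot be smooth, so $C_1$ has genuine contact with $C_2$. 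Translating into the splice--diagram language of Subsection \ref{ss:ordered to min}, this means that the diagram of $C=C_1\cup C_2$ has exactly the shape of families (I) and (II) of Theorem \ref{prop:splice1}, except that the arrowhead of $C_1$ now sits on an interior node instead of on a terminal $(-1)$--vertex.

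With the diagram fixed, I would reuse the extension--factor bookkeeping of Theorem \ref{prop:splice1}. Using \eqref{eq:EN}, I factor $\Delta=P_1\cdots P_N\cdot\Delta'$, where the $P_i$ absorb the chains and the parts of the diagram lying away from the relevant nodes and have nonnegative coefficients, while $\Delta'$ is the small core coming from the node at $v_1$, the node at $v_2$, and the branch point between them. Because the $P_i$ are extension factors, any unordered pair in $\Supp(\Delta')$ lifts to an unordered pair in $\Supp(\Delta)$. The incomparable pair in $\Supp(\Delta')$ is manufactured from the two competing directions at the branch: for $u$ on the $C_1$--arm the vector $(m_{uv_1},m_{uv_2})$ has large first coordinate, whereas for $u$ on the $C_2$--arm it has large second coordinate, and the strict inequalities needed to keep the two resulting monomials incomparable are precisely the edge--inequalities of the splice diagram.

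The hard part will be this first step: the faithful dictionary between the hypothesis (a statement about $Z_{\min}(\Gamma\setminus v_2)$) and the splice decorations, together with an exhaustive treatment of the degenerate positions $v_1$ may occupy and the correct use of the $(-1)$--vertex condition at $v_2$. As in Theorem \ref{prop:splice1}, it is exactly here that Lemma \ref{lem: inequalities for z and w} enters: the positive integers $z,w$ it supplies encode the incomparable exponents, and its three cases in \eqref{eq:3cases} should match the degenerate locations of $v_1$. A possible shortcut, combining Lemma \ref{alg good} with the semigroup description to deduce directly that ordered $\Delta$ forces $1\in S_{C_1}$, seems to control only the smoothness of $C_1$ and not the finer simplicity of $v_1$, so I expect it to reduce to the same splice computation in the end.
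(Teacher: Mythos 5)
Your proposal is a plan rather than a proof, and the step you yourself flag as ``the hard part'' --- translating the non--simplicity of $v_1$ in $\Gamma\setminus v_2$ into concrete splice--diagram data from which an unordered pair in $\Supp(\Delta)$ can be read off --- is exactly the step that is missing and that does not reduce to Theorem \ref{prop:splice1}. In families (I) and (II) of that theorem both arrowheads sit on $(-1)$--vertices of the minimal embedded resolution of $C$, and the unordered pairs are extracted from the edge--inequalities at those terminal nodes. Here, by hypothesis, $\Gamma$ is the minimal good resolution of $C_2$ alone and the arrowhead of $C_1$ may sit on a leg, between nodes, or on a node; $v_1$ is generally not a $(-1)$--vertex, and the paper's own warning in case (II.Deg) explains precisely why the extension--factor reduction breaks down without that condition: the ``core'' polynomial $\Delta'$ of a shortened diagram can perfectly well be of ordered type (Example \ref{two puiseux} with $q_2<p_2$ gives ordered $\Delta$ even though $C_1$ is singular and sits at an interior node). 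So non--simplicity of $v_1$ is not a statement you can localize at one or two nodes of the splice diagram; it is governed by the self--intersection decorations of the resolution graph (e.g.\ $e=-\lceil p_2/q_2\rceil-1$ in Example \ref{two puiseux}), and you give no mechanism for converting the inequality ``$E_{v_1}$--coefficient of $Z_{\min}(\Gamma\setminus v_2)\ge 2$'' into an inequality between splice decorations that produces incomparable exponents. Lemma \ref{lem: inequalities for z and w}, which you propose as the engine, supplies the pair $(z,w)$ only under determinant and connectivity hypotheses tied to $(-1)$--vertices; it is not applicable at an arbitrary interior $v_1$.

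The paper's argument runs in the opposite direction and through entirely different tools: assuming $\Delta$ \emph{is} ordered, it compares the two support points $Q=(xgw+1,q_2)$ (obtained from Proposition \ref{alex 1 component} and Lemma \ref{alex on a line}, using $\mu(C_1)\le m_{v_3v_1}=xgw$) and $pr_{12}(wE^*_{v_4})=(wgx,wa_s)$; orderedness forces $N:=q_2-wa_s>0$, and the identity $Q+N\,pr_{12}(E^*_{v_3})=(N+1)pr_{12}(E^*_{v_4})+(1,0)$ together with the inclusion $\Supp(\Delta)\subset pr_{12}C(\Gamma)$ then yields an explicit element of the Lipman cone of $\Gamma'$ with $E_{v_1}$--coefficient $1$, which is simplicity by the remark after Definition \ref{def simple}. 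None of this semigroup/Lipman--cone mechanism appears in your proposal. The parts of your outline that are correct (rationality of $\Gamma\setminus v_2$, simplicity of the neighbour of $v_2$ via the \emph{general fact}, and non--smoothness of $C_1$) do not by themselves produce the two incomparable support points, so as it stands the proof has a genuine gap at its central step.
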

\begin{proof}  
Note that $\Gamma\setminus v_2$ has two components. One of them is a string (it supports the decoration
$p_s$ in the diagram below). Since the minimal cycle of a string is reduced, if $v_1$ is
one of its vertices it automatically satisfies the wished simplicity. Thus we assume that $v_1$ is in the other component, denoted by $\Gamma'$.
We will use the following splice diagram (which codes the needed multiplicities/determinants),
where we also distinguish the vertex $v_3$,
vertex of $\Gamma'$ adjacent to $v_2$ in the resolution graph $\Gamma$. (In particular,
the splice diagram is not minimal, the $v_3$--node has valency two).
In this diagram $v_1$ is sitting between two nodes, but for any other choice of $v_1$ the proof runs identically
(see below). Set also $g:=p_{n+1}\cdots p_{s-1}$.

\begin{picture}(400,100)(20,0)
\put(60,60){\circle*{4}}
\put(100,60){\circle*{4}}
\put(100,30){\circle*{4}}
\put(92,65){\makebox(0,0){$a_1$}}
\put(108,65){\makebox(0,0){$1$}}
\put(95,50){\makebox(0,0){$p_1$}}
\put(100,30){\line(0,1){30}}
\put(60,60){\line(1,0){55}}
\put(130,60){\makebox(0,0){$\cdots$}}

\put(160,60){\circle*{4}}
\put(160,30){\circle*{4}}
\put(160,30){\line(0,1){30}}
\put(152,65){\makebox(0,0){$a_n$}}
\put(168,65){\makebox(0,0){$1$}}
\put(153,50){\makebox(0,0){$p_n$}}
\put(140,60){\line(1,0){100}}
\put(230,60){\circle*{4}}
\put(290,60){\circle*{4}}
\put(290,30){\circle*{4}}
\put(290,30){\line(0,1){30}}
\put(182,65){\makebox(0,0){$x$}}
\put(198,65){\makebox(0,0){$y$}}
\put(190,60){\circle*{4}}
\put(190,60){\vector(0,1){30}}
\put(400,60){\circle*{4}}
\put(260,60){\line(1,0){70}}
\put(400,30){\circle*{4}}
\put(340,60){\circle*{4}}
\put(400,30){\line(0,1){30}}
\put(230,30){\circle*{4}}
\put(230,30){\line(0,1){30}}
\put(320,60){\vector(1,0){120}}
\put(330,65){\makebox(0,0){$z$}}
\put(350,65){\makebox(0,0){$w$}}
\put(236,65){\makebox(0,0){$1$}}
\put(410,65){\makebox(0,0){$1$}}
\put(218,65){\makebox(0,0){$a_{n+1}$}}
\put(218,50){\makebox(0,0){$p_{n+1}$}}
\put(300,65){\makebox(0,0){$1$}}
\put(278,65){\makebox(0,0){$a_{s-1}$}}
\put(278,50){\makebox(0,0){$p_{s-1}$}}
\put(390,65){\makebox(0,0){$a_{s}$}}
\put(392,50){\makebox(0,0){$p_{s}$}}
\put(253,60){\makebox(0,0){$\cdots$}}
\put(190,20){\makebox(0,0){$v_1$}}
\put(340,20){\makebox(0,0){$v_3$}}
\put(400,20){\makebox(0,0){$v_4$}}
\put(400,85){\makebox(0,0){$v_2$}}
\end{picture}

Recall that $C(\Gamma)$ denote the Lipman cone \eqref{eq:conegamma}, let $pr_{12}C(\Gamma)\subset \BZ^2$
be its projection to the coordinates of $v_1$ and $v_2$. Let $m_{uv}$ denote the entries of
$-I^{-1}$ as in \ref{ss:ordered to min}. Then, for any $v\in \Vv$ the cycles $E^*_v:=\sum_um_{uv}E_u$
belongs to $C(\Gamma)$. In fact, they generate the real cone, since $(E^*_v,E_w)=0$ if $v\not=0$ and $=-1$ for
$v=w$. Hence $(m_{uv_1},m_{uv_2})\in pr_{12}C(\Gamma)$
for any $u$.
One has the following inclusion:
\begin{equation}
\label{lem:EN}
\Supp(\Delta)\subset pr_{12}C(\Gamma). 
\end{equation}
This follows from the expansion of the right hand side of \eqref{eq:EN} as power series in $t_1,t_2$.

Our goal is to construct a cycle in  $C(\Gamma')$ (hence supported on $\Gamma'$)
with $E_{v_1}$--multiplicity 1.

First we consider the projection $pr_{12}(E^*_{v_3})=(m_{v_3v_1},m_{v_3v_2})=(xgw,zp_s)$.
The entry $xgw$ can be compared with the linking number $l=m_{v_1v_2}=xgp_s$ of the components of $C$.
Indeed, if we apply \ref{BraunN}(a) for the string (without arrowhead) staying right to $v_3$ and for $G$ the
edge adjacent to $v_2$ we obtain $w<p_s$. Therefore, $xgw\leq l-xg$. On the other hand, using again
\ref{BraunN}(a) we have $x\geq a_np_ny\geq 2$. Thus $xgw\leq l-2$.

Next, $m_{v_3v_1}\geq m_{v_1v_1}$. Indeed, $m_{v_3v_1}$ (resp. $m_{v_1v_1}$) is the multiplicity
of $C_1$ along $E_{v_3}$ (resp. $E_{v_1}$), and in the resolution process of $\Gamma$
there is a sequence of blowups whose first member creates
$E_{v_1}$ and the last one   $E_{v_3}$ (here one uses the minimality of $\Gamma$ and the fact that
there is no extra blowup imposed by $C_1$). Hence $\mu(C_1)\leq m_{v_1v_1}\leq m_{v_3v_1}= xgw$.

Therefore,  by Proposition \ref{alex 1 component}
$xgw+1$ is in the semigroup of $C_1$, and  by Lemma  \ref{alex on a line} there exists $q_2$ such that
\begin{equation}\label{eq:Delta1}
Q:=(xgw+1,q_2)\in \Supp(\Delta).
\end{equation}

Next, since $w<p_s$ (see above), by  \eqref{eq:EN} $pr_{12}(wE^*_{v_4})\in {\rm Supp}(\Delta)$ too.
Its coordinates are $(wgx,wa_s)$. This can be compared with the other support point from
\eqref{eq:Delta1}. Since $\Delta$ is of ordered type, we conclude  $N:=q_2-wa_s>0$.

Using again the determinantal property \ref{BraunN}(a) for $\Gamma$ and the edge
$(v_3v_2)$, we get $wa_s=zp_s+1$. By a computation
\begin{equation}\label{eq:Delta2}
Q+N\cdot pr_{12}(E^*_{v_3})=(N+1)pr_{12}(E^*_{v_4})+(1,0).
\end{equation}
Note that by \eqref{lem:EN} $Q\in pr_{12}C(\Gamma)$, hence there exists
a cycle $D\in C(\Gamma)$ such that $pr_{12}(D)=Q+N\cdot pr_{12}(E^*_{v_3})$.
Then, finally we define the cycle $F'$ on $L(\Gamma')$ as the restriction of
$F:=D-(N+1)E^*_{v_4}$ on $\Gamma'$.
First notice that by \eqref{eq:Delta2} the $E_{v_2}$--multiplicity of $F$ is 0, therefore
for any $E_u$ with $u\in\Vv(\Gamma')$ one has $(F,E_u)_{\Gamma}=(F',E_u)_{\Gamma'}$.
But $(F,E_u)_{\Gamma}\leq 0$ since $D\in C(\Gamma)$ and $(E^*_{v_4},E_u)=0$.
Hence $F'\in C(\Gamma')$. On the other hand, the $E_{v_1}$--multiplicity of $F'$ is 1 by
\eqref{eq:Delta2}.  This ends the proof in the case of this position of the arrowhead of $C_1$.


If the $v_1$ node coincides with the $n$--th node of the diagram, then one has to make the modification
$x=a_n$  and
$g:=p_n\cdots p_{s-1}$. If it is one the $n$--th leg then $g=a_np_{n+1}\cdots p_{s-1}$. One verifies that in any
situation $xg>1$, and the above proof runs with these modifications.

This ends the proof of Theorems \ref{th:ordered to simple} and \ref{th:alg alex2}.
\end{proof}




\section{Examples}\label{s:EXAMPLES}

\subsection{}
We illustrate the above results with explicit examples.

\begin{example}
\label{whitehead}
The $h$--function for the Whitehead link (in appropriate normalization of Alexander gradings) is shown in Figure \ref{fig:whitehead}.
The bivariate Alexander polynomial equals $\Delta(t_1,t_2)=-(1-t_1^{-1})(1-t_2^{-1})$ and $c=(0,0)$.
By \eqref{stabilization h} we get
$$h_1(v_1)=h(v_1,-1)=\max(v_1,0),\ h_2(v_2)=h(-1,v_2)=\max(v_2,0).$$
The point $v=(0,0)$ (circled  in Figure \ref{fig:whitehead}) is good ($h(0,0)=1$ while $h_1(0)=h_2(0)=0$) and self-dual, so it is very good. Therefore all L--space surgeries on the Whitehead link have positive coefficients,
in agreement with \cite[Proposition 6.4]{Liu}.
\end{example}

\begin{figure}[ht]
\begin{tikzpicture}
\draw (0,0) node {$0$};
\draw (1,0) node {$0$};
\draw (2,0) node {$1$};
\draw (3,0) node {$2$};
\draw (4,0) node {$3$};
\draw (0,1) node {$0$};
\draw (1,1) node {$1$};
\draw (2,1) node {$1$};
\draw (3,1) node {$2$};
\draw (4,1) node {$3$};
\draw (0,2) node {$1$};
\draw (1,2) node {$1$};
\draw (2,2) node {$2$};
\draw (3,2) node {$3$};
\draw (4,2) node {$4$};
\draw (0,3) node {$2$};
\draw (1,3) node {$2$};
\draw (2,3) node {$3$};
\draw (3,3) node {$4$};
\draw (4,3) node {$5$};
\draw (0,4) node {$3$};
\draw (1,4) node {$3$};
\draw (2,4) node {$4$};
\draw (3,4) node {$5$};
\draw (4,4) node {$6$};

\draw (1,1) circle(0.3);

\draw [dashed] (0.2,0)--(0.8,0);
\draw [dashed] (1.2,0)--(1.8,0);
\draw [dashed] (2.2,0)--(2.8,0);
\draw [dashed] (3.2,0)--(3.8,0);
\draw [dashed] (0.2,1)--(0.8,1);
\draw [dashed] (1.2,1)--(1.8,1);
\draw [dashed] (2.2,1)--(2.8,1);
\draw [dashed] (3.2,1)--(3.8,1);
\draw [dashed] (0.2,2)--(0.8,2);
\draw [dashed] (1.2,2)--(1.8,2);
\draw [dashed] (2.2,2)--(2.8,2);
\draw [dashed] (3.2,2)--(3.8,2);
\draw [dashed] (0.2,3)--(0.8,3);
\draw [dashed] (1.2,3)--(1.8,3);
\draw [dashed] (2.2,3)--(2.8,3);
\draw [dashed] (3.2,3)--(3.8,3);
\draw [dashed] (0.2,4)--(0.8,4);
\draw [dashed] (1.2,4)--(1.8,4);
\draw [dashed] (2.2,4)--(2.8,4);
\draw [dashed] (3.2,4)--(3.8,4);

\draw [dashed] (0,0.2)--(0,0.8);
\draw [dashed] (1,0.2)--(1,0.8);
\draw [dashed] (2,0.2)--(2,0.8);
\draw [dashed] (3,0.2)--(3,0.8);
\draw [dashed] (4,0.2)--(4,0.8);
\draw [dashed] (0,1.2)--(0,1.8);
\draw [dashed] (1,1.2)--(1,1.8);
\draw [dashed] (2,1.2)--(2,1.8);
\draw [dashed] (3,1.2)--(3,1.8);
\draw [dashed] (4,1.2)--(4,1.8);
\draw [dashed] (0,2.2)--(0,2.8);
\draw [dashed] (1,2.2)--(1,2.8);
\draw [dashed] (2,2.2)--(2,2.8);
\draw [dashed] (3,2.2)--(3,2.8);
\draw [dashed] (4,2.2)--(4,2.8);
\draw [dashed] (0,3.2)--(0,3.8);
\draw [dashed] (1,3.2)--(1,3.8);
\draw [dashed] (2,3.2)--(2,3.8);
\draw [dashed] (3,3.2)--(3,3.8);
\draw [dashed] (4,3.2)--(4,3.8);

\draw (0,-0.5) node {$\scriptstyle{-1}$};
\draw (1,-0.5) node {$\scriptstyle{0}$};
\draw (2,-0.5) node {$\scriptstyle{1}$};
\draw (3,-0.5) node {$\scriptstyle{2}$};
\draw (4,-0.5) node {$\scriptstyle{3}$};
\draw (4.5,-0.5) node {$\scriptstyle{v_1}$};

\draw (-0.5,0) node {$\scriptstyle{-1}$};
\draw (-0.5,1) node {$\scriptstyle{0}$};
\draw (-0.5,2) node {$\scriptstyle{1}$};
\draw (-0.5,3) node {$\scriptstyle{2}$};
\draw (-0.5,4) node {$\scriptstyle{3}$};
\draw (-0.5,4.5) node {$\scriptstyle{v_2}$};
\end{tikzpicture}
\caption{$h$--function for the Whitehead link}
\label{fig:whitehead}
\end{figure}
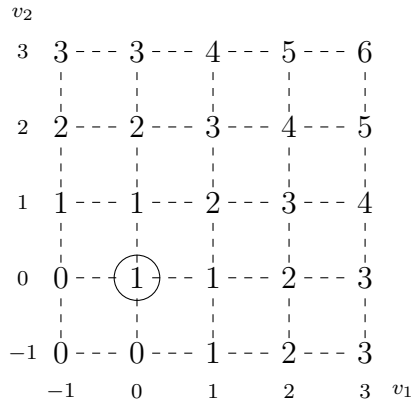

\begin{example}
\label{two cusps}
Consider the singularity $(x^2-y^3)(x^3-y^2)=0$, corresponding to a pair of trefoils with linking number 4.
The bivariate Alexander polynomial equals $\Delta(t_1,t_2)=(1+t_1^2t_2^3)(1+t_1^3t_2^2)$. The graph of the $h$--function is given in the Figure \ref{fig: Hilb two cusps} (the semigroup is bold). For a point $v=(3,3)$ one has $h(v)=3$ and $h_1(v)=h_2(v)=2$.
Since $c=(6,6)$, one has $v^*=v$, so $v$ is very good. For $\ls(L)$ see Figure \ref{fig: ls two cusps}.
\end{example}

\begin{figure}[ht]
\begin{tikzpicture}
\draw (0,0) node {$\mathbf{0}$};
\draw (1,0) node {$1$};
\draw (2,0) node {$1$};
\draw (3,0) node {$2$};
\draw (4,0) node {$3$};
\draw (5,0) node {$4$};
\draw (6,0) node {$5$};
\draw (7,0) node {$6$};
\draw (0,1) node {$1$};
\draw (1,1) node {$1$};
\draw (2,1) node {$1$};
\draw (3,1) node {$2$};
\draw (4,1) node {$3$};
\draw (5,1) node {$4$};
\draw (6,1) node {$5$};
\draw (7,1) node {$6$};
\draw (0,2) node {$1$};
\draw (1,2) node {$1$};
\draw (2,2) node {$\mathbf{1}$};
\draw (3,2) node {$\mathbf{2}$};
\draw (4,2) node {$3$};
\draw (5,2) node {$4$};
\draw (6,2) node {$5$};
\draw (7,2) node {$6$};
\draw (0,3) node {$2$};
\draw (1,3) node {$2$};
\draw (2,3) node {$\mathbf{2}$};
\draw (3,3) node {$3$};
\draw (4,3) node {$3$};
\draw (5,3) node {$4$};
\draw (6,3) node {$5$};
\draw (7,3) node {$6$};
\draw (0,4) node {$3$};
\draw (1,4) node {$3$};
\draw (2,4) node {$3$};
\draw (3,4) node {$3$};
\draw (4,4) node {$\mathbf{3}$};
\draw (5,4) node {$\mathbf{4}$};
\draw (6,4) node {$\mathbf{5}$};
\draw (7,4) node {$\mathbf{6}$};
\draw (0,5) node {$4$};
\draw (1,5) node {$4$};
\draw (2,5) node {$4$};
\draw (3,5) node {$4$};
\draw (4,5) node {$\mathbf{4}$};
\draw (5,5) node {$\mathbf{5}$};
\draw (6,5) node {$6$};
\draw (7,5) node {$7$};
\draw (0,6) node {$5$};
\draw (1,6) node {$5$};
\draw (2,6) node {$5$};
\draw (3,6) node {$5$};
\draw (4,6) node {$\mathbf{5}$};
\draw (5,6) node {$6$};
\draw (6,6) node {$\mathbf{6}$};
\draw (7,6) node {$\mathbf{7}$};
\draw (0,7) node {$6$};
\draw (1,7) node {$6$};
\draw (2,7) node {$6$};
\draw (3,7) node {$6$};
\draw (4,7) node {$\mathbf{6}$};
\draw (5,7) node {$7$};
\draw (6,7) node {$\mathbf{7}$};
\draw (7,7) node {$\mathbf{8}$};

\draw (3,3) circle(0.3);

\draw [dashed] (0.2,0)--(0.8,0);
\draw [dashed] (1.2,0)--(1.8,0);
\draw [dashed] (2.2,0)--(2.8,0);
\draw [dashed] (3.2,0)--(3.8,0);
\draw [dashed] (4.2,0)--(4.8,0);
\draw [dashed] (5.2,0)--(5.8,0);
\draw [dashed] (6.2,0)--(6.8,0);
\draw [dashed] (0.2,1)--(0.8,1);
\draw [dashed] (1.2,1)--(1.8,1);
\draw [dashed] (2.2,1)--(2.8,1);
\draw [dashed] (3.2,1)--(3.8,1);
\draw [dashed] (4.2,1)--(4.8,1);
\draw [dashed] (5.2,1)--(5.8,1);
\draw [dashed] (6.2,1)--(6.8,1);
\draw [dashed] (0.2,2)--(0.8,2);
\draw [dashed] (1.2,2)--(1.8,2);
\draw [dashed] (2.2,2)--(2.8,2);
\draw [dashed] (3.2,2)--(3.8,2);
\draw [dashed] (4.2,2)--(4.8,2);
\draw [dashed] (5.2,2)--(5.8,2);
\draw [dashed] (6.2,2)--(6.8,2);
\draw [dashed] (0.2,3)--(0.8,3);
\draw [dashed] (1.2,3)--(1.8,3);
\draw [dashed] (2.2,3)--(2.8,3);
\draw [dashed] (3.2,3)--(3.8,3);
\draw [dashed] (4.2,3)--(4.8,3);
\draw [dashed] (5.2,3)--(5.8,3);
\draw [dashed] (6.2,3)--(6.8,3);
\draw [dashed] (0.2,4)--(0.8,4);
\draw [dashed] (1.2,4)--(1.8,4);
\draw [dashed] (2.2,4)--(2.8,4);
\draw [dashed] (3.2,4)--(3.8,4);
\draw [dashed] (4.2,4)--(4.8,4);
\draw [dashed] (5.2,4)--(5.8,4);
\draw [dashed] (6.2,4)--(6.8,4);
\draw [dashed] (0.2,5)--(0.8,5);
\draw [dashed] (1.2,5)--(1.8,5);
\draw [dashed] (2.2,5)--(2.8,5);
\draw [dashed] (3.2,5)--(3.8,5);
\draw [dashed] (4.2,5)--(4.8,5);
\draw [dashed] (5.2,5)--(5.8,5);
\draw [dashed] (6.2,5)--(6.8,5);
\draw [dashed] (0.2,6)--(0.8,6);
\draw [dashed] (1.2,6)--(1.8,6);
\draw [dashed] (2.2,6)--(2.8,6);
\draw [dashed] (3.2,6)--(3.8,6);
\draw [dashed] (4.2,6)--(4.8,6);
\draw [dashed] (5.2,6)--(5.8,6);
\draw [dashed] (6.2,6)--(6.8,6);
\draw [dashed] (0.2,7)--(0.8,7);
\draw [dashed] (1.2,7)--(1.8,7);
\draw [dashed] (2.2,7)--(2.8,7);
\draw [dashed] (3.2,7)--(3.8,7);
\draw [dashed] (4.2,7)--(4.8,7);
\draw [dashed] (5.2,7)--(5.8,7);
\draw [dashed] (6.2,7)--(6.8,7);

\draw [dashed] (0,0.2)--(0,0.8);
\draw [dashed] (1,0.2)--(1,0.8);
\draw [dashed] (2,0.2)--(2,0.8);
\draw [dashed] (3,0.2)--(3,0.8);
\draw [dashed] (4,0.2)--(4,0.8);
\draw [dashed] (5,0.2)--(5,0.8);
\draw [dashed] (6,0.2)--(6,0.8);
\draw [dashed] (7,0.2)--(7,0.8);
\draw [dashed] (0,1.2)--(0,1.8);
\draw [dashed] (1,1.2)--(1,1.8);
\draw [dashed] (2,1.2)--(2,1.8);
\draw [dashed] (3,1.2)--(3,1.8);
\draw [dashed] (4,1.2)--(4,1.8);
\draw [dashed] (5,1.2)--(5,1.8);
\draw [dashed] (6,1.2)--(6,1.8);
\draw [dashed] (7,1.2)--(7,1.8);
\draw [dashed] (0,2.2)--(0,2.8);
\draw [dashed] (1,2.2)--(1,2.8);
\draw [dashed] (2,2.2)--(2,2.8);
\draw [dashed] (3,2.2)--(3,2.8);
\draw [dashed] (4,2.2)--(4,2.8);
\draw [dashed] (5,2.2)--(5,2.8);
\draw [dashed] (6,2.2)--(6,2.8);
\draw [dashed] (7,2.2)--(7,2.8);
\draw [dashed] (0,3.2)--(0,3.8);
\draw [dashed] (1,3.2)--(1,3.8);
\draw [dashed] (2,3.2)--(2,3.8);
\draw [dashed] (3,3.2)--(3,3.8);
\draw [dashed] (4,3.2)--(4,3.8);
\draw [dashed] (5,3.2)--(5,3.8);
\draw [dashed] (6,3.2)--(6,3.8);
\draw [dashed] (7,3.2)--(7,3.8);
\draw [dashed] (0,4.2)--(0,4.8);
\draw [dashed] (1,4.2)--(1,4.8);
\draw [dashed] (2,4.2)--(2,4.8);
\draw [dashed] (3,4.2)--(3,4.8);
\draw [dashed] (4,4.2)--(4,4.8);
\draw [dashed] (5,4.2)--(5,4.8);
\draw [dashed] (6,4.2)--(6,4.8);
\draw [dashed] (7,4.2)--(7,4.8);
\draw [dashed] (0,5.2)--(0,5.8);
\draw [dashed] (1,5.2)--(1,5.8);
\draw [dashed] (2,5.2)--(2,5.8);
\draw [dashed] (3,5.2)--(3,5.8);
\draw [dashed] (4,5.2)--(4,5.8);
\draw [dashed] (5,5.2)--(5,5.8);
\draw [dashed] (6,5.2)--(6,5.8);
\draw [dashed] (7,5.2)--(7,5.8);
\draw [dashed] (0,6.2)--(0,6.8);
\draw [dashed] (1,6.2)--(1,6.8);
\draw [dashed] (2,6.2)--(2,6.8);
\draw [dashed] (3,6.2)--(3,6.8);
\draw [dashed] (4,6.2)--(4,6.8);
\draw [dashed] (5,6.2)--(5,6.8);
\draw [dashed] (6,6.2)--(6,6.8);
\draw [dashed] (7,6.2)--(7,6.8);
\end{tikzpicture}
\caption{$h$--function for a pair of `transversal' trefoils}
\label{fig: Hilb two cusps}
\end{figure}
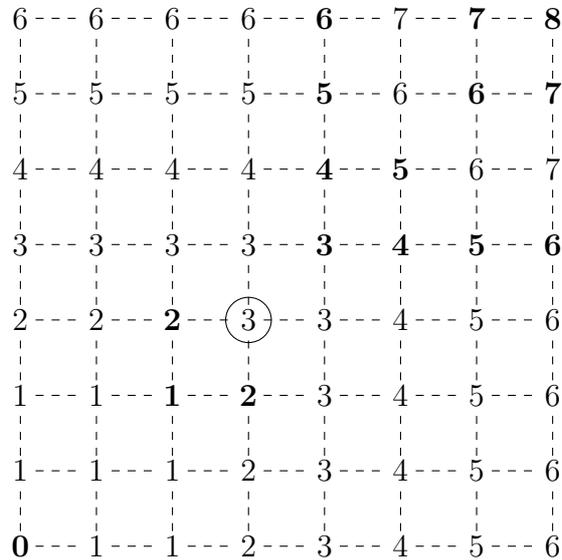

\begin{example}\label{ex:sS}
Consider the family  $L(s)$ consisting of the trefoil $L_1$ and its $(2,2s+1)$ cable $L_2$.
$L(s)$ is algebraic for $s\ge 6$:  $L_2$  is the link of the  singularity $(x,y)=(t^4,t^6+t^{2s-5})$.
Then
$$
\Delta  
=(1+t_1^4t_2^2+t_1^6t_2^3+t_1^8t_2^4+t_1^{10}t_2^5+t_1^{14}t_2^7)(1+t_1^{2s+1}t_2^6).
$$
For $s=6$ the polynomial $\Delta$  is of ordered type (see Figure \ref{fig:l6}).
For $s\ge 7$ ${\rm Supp}(\Delta)$
contains the unordered points $u=(14,7)$ and $v=(2s+1,6)$,
see Figure \ref{fig:l7} for ${\rm Supp}(\Delta(L(7)))$.

In fact, one can check (see, e.g. \cite{GN1}) that
 $(d,26)\in \ls(L(6))$ for any  $d\in \BZ$.
\end{example}

\begin{figure}[ht]
\begin{tikzpicture}
\begin{axis}[xmin=0,xmax=29,ymin=0,ymax=13,xlabel={$v_1$},ylabel={$v_2$}]
\addplot [only marks,mark=*] coordinates {
	(0,0)(4,2)(6,3)(8,4)(10,5)(14,7)
	(13,6)(17,8)(19,9)(21,10)(23,11)(27,13)
	};
\end{axis}
\end{tikzpicture}
\caption{The support of the Alexander polynomial of $L(6)$}
\label{fig:l6}
\end{figure}
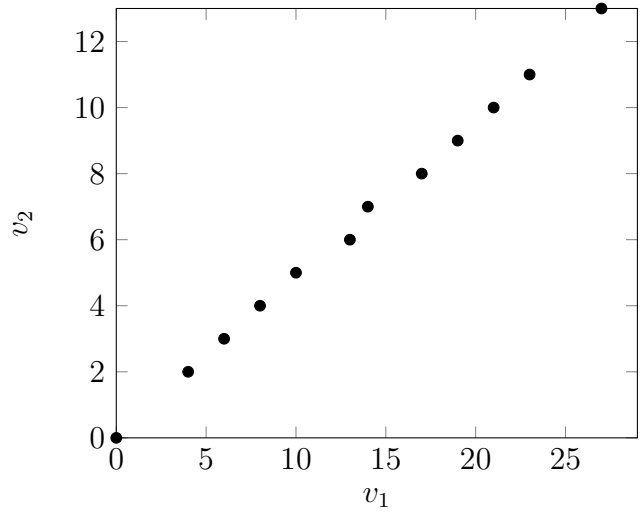

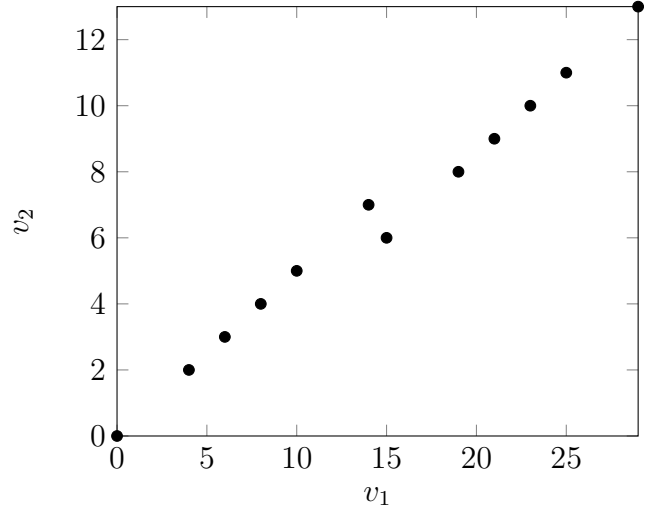
\begin{figure}[ht]
\begin{tikzpicture}
\begin{axis}[xmin=0,xmax=29,ymin=0,ymax=13,xlabel={$v_1$},ylabel={$v_2$}]
\addplot [only marks,mark=*] coordinates {
	(0,0)(4,2)(6,3)(8,4)(10,5)(14,7)
	(15,6)(19,8)(21,9)(23,10)(25,11)(29,13)
	};
\end{axis}
\end{tikzpicture}
\caption{The support of the Alexander polynomial of $L(7)$}
\label{fig:l7}
\end{figure}

\begin{example}
\label{two puiseux}
Let $L$ be an algebraic link such that $L_1$ is an $(p_1,a_1)$
torus knot, and $L_2$ is its $(p_2,a_2)$ cable, where $a_2=a_1p_1p_2+q_2$ ($q_2>0$).
(This means that $L_2$ has Newton pairs $(p_1,a_1)$, $(p_2,q_2)$, the
splice diagram  has two nodes decorated with
$(p_1,a_1)$ and $(p_2,a_2)$,   and the first node supports $L_1$.)
Then $\Delta$  is of ordered type  if and only if $q_2<p_2$.

Indeed, in this case $v_1$ is the node of the star--shaped  graph
$\Gamma\setminus v_2$ with three legs, and its
self--intersection is $e:=-\lceil p_2/q_2\rceil-1$, cf. \cite{N2}. If $q_2<p_2$ then $e\leq -3$ and $v_1$ is simple
($Z_{\min}=\sum_uE_u$),
otherwise $e=-2$ and $v_1$ is not simple in $\Gamma\setminus v_2$.
\end{example}

\begin{example}
\label{big example}
With the notation of previous example, assume that $p_1=2,a_1=3,p_2=3,q_2=2$.
This is an algebraic link consisting of the trefoil and its $(3,20)$ cable.
Since $q_2<p_2$, the Alexander polynomial of $L$ is of ordered type, cf. \ref{two puiseux},
see Figure \ref{fig:big alex} for its support.


Therefore  $\ls(L)$ is unbounded from below, nevertheless,
 only one of `test lines' $\{m_1\}\times \BZ$, $\BZ\times \{m_2\}$ confirms this fact.
The set $\ls(L)$ (computed using \cite{JProgram}) is shown in Figure \ref{fig:big ls}.
\end{example}

\begin{figure}[ht]
\begin{tikzpicture}
\begin{axis}[xmin=0,xmax=61,ymin=0,ymax=19,xlabel={$v_1$},ylabel={$v_2$}]
\addplot [only marks,mark=*] coordinates {
	(0,0)(6,2)(9,3)(12,4)(15,5)(21,7)
	(20,6)(26,8)(29,9)(32,10)(35,11)(41,13)
	(40,12)(46,14)(49,15)(52,16)(55,17)(61,19)
	};
\end{axis}
\end{tikzpicture}
\caption{The support of the Alexander polynomial in Example \ref{big example}}
\label{fig:big alex}
\end{figure}
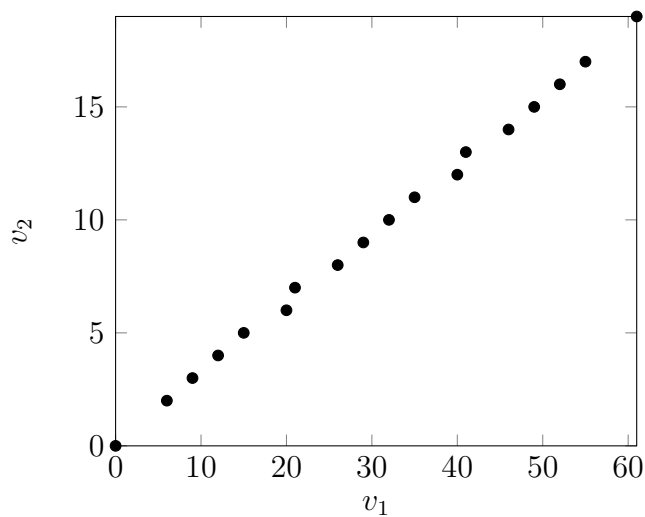

\begin{figure}[ht]
\begin{tikzpicture}[scale=0.2]
\filldraw[color=gray] (-20,-17)--(-20,1)--(0,1)--(0,10)--(10,10)--(10,-5)--(-1,-5)--(-1,-20)--(-5,-20)--(-5,-17)--(-10,-17);
\draw [->] (-20,-20)--(10,-20);
\draw [->] (-20,-20)--(-20,10);
\draw (-20,0)--(10,0);
\draw (0,-20)--(0,10);
\draw (10,-22) node {$d_1$};
\draw (-22,10) node {$d_2$};
\draw (3,-21) node {$6=m_1$};
\draw (-24,0) node {$m_2=60$};
\draw (-26,-17) node {$\mu_2-1=43$};
\draw (-9,-21) node {$\mu_1-1=1$};
\end{tikzpicture}
\caption{Sketch of $\ls(L)$ in Example \ref{big example}}
\label{fig:big ls}
\end{figure}


\begin{proof}[Proof of the statement of Example \ref{th:cables}]
Let $K$ be an L--space knot and $n/m>2g(K)-1$. Then (e.g. by \cite[Proposition 2.1]{GHom})
the $(mn,d)$--surgery on $K_{2m,2n}$ yields a connected sum
$$
S^{3}_{mn,d}(K_{2m,2n})=S^{3}_{n/m}(K)\#L(m,n)\#L(d-mn,1).
$$
Since  $n/m>2g(K)-1$, $S^{3}_{n/m}(K)$ is an L--space, so for all $d\neq mn$
the 3--manifold $S^{3}_{mn,d}(K_{2m,2n})$ is an L--space too. Therefore $(mn,d)\in \ls(K_{2m,2n})$ for all $d\neq mn$, and
$\ls(K_{2m,2n})$ is unbounded.
\end{proof}

\bigskip\bigskip

\end{document}